\documentclass[12pt,a4paper]{article}

\usepackage{graphicx}
\usepackage{subfig}
\usepackage{amsmath,amsthm,amssymb}
\usepackage{esint}
\usepackage{mathrsfs}

\usepackage[T1]{fontenc}

\usepackage{hyperref}

\usepackage[left=2.2cm,right=2.2cm,top=3cm,bottom=3.5cm]{geometry}

\usepackage{color}
\usepackage{array}

\usepackage{multirow}

\theoremstyle{plain}
\newtheorem{thm}{Theorem}[section]
\newtheorem{prop}[thm]{Proposition}
\newtheorem{lemma}[thm]{Lemma}
\newtheorem{cor}[thm]{Corollary}
\theoremstyle{definition}
\newtheorem{defn}[thm]{Definition}

\theoremstyle{remark}
\newtheorem{rem}[thm]{Remark}

\numberwithin{equation}{section}

\usepackage{csquotes}
\usepackage[backend=bibtex8, style=numeric-comp, maxnames=5, sorting=nyt]{biblatex}
\usepackage{enumitem}
\renewcommand\labelenumi{(\alph{enumi})}
\renewcommand\theenumi\labelenumi

\newcommand{\R}{\mathbb{R}} 
\newcommand{\N}{\mathbb{N}}
\newcommand{\Z}{\mathbb{Z}} 

\newcommand{\Grad}{\nabla}  
\newcommand{\Div}{{\rm div}\,} 
\newcommand{\Lap}{\Delta} 
\newcommand{\parthree}[3]{\partial_{#1#2#3}^3}

\newcommand{\dx}{\,{\rm d}\vx}
\newcommand{\dt}{\,{\rm d}t}  
\newcommand{\ds}{\,{\rm d}s}  
\newcommand{\dnu}{\,{\rm d}\nu}
\newcommand{\dR}{\,{\rm d}\mathfrak{R}}
\newcommand{\dE}{\,{\rm d}\mathfrak{E}}

\renewcommand{\vec}[1]{{\bf #1}}
\newcommand{\vf}{\vec{f}}
\newcommand{\vk}{\vec{k}} 
\newcommand{\vv}{\vec{v}}
\newcommand{\vw}{\vec{w}}
\newcommand{\vx}{\vec{x}}
\newcommand{\vy}{\vec{y}}
\newcommand{\veta}{\boldsymbol{\eta}}
\newcommand{\vsigma}{\boldsymbol{\sigma}}
\newcommand{\vF}{\vec{F}}
\newcommand{\vu}{\vec{u}} 
\newcommand{\vm}{\vec{m}} 
\newcommand{\vphi}{\boldsymbol{\varphi}} 

\newcommand{\mU}{\mathbb{U}} 
\newcommand{\id}{\mathbb{I}}
\newcommand{\mA}{\mathbb{A}}
\newcommand{\mM}{\mathbb{M}}
\newcommand{\mS}{\mathbb{S}}

\newcommand{\sym}[1]{{\rm Sym}(#1)}
\newcommand{\symz}[1]{{\rm Sym}_0(#1)}

\newcommand{\tr}{{\rm tr}\,} 
\newcommand{\trans}{\top} 

\newcommand{\interior}[1]{{#1}^\circ} 

\newcommand{\T}{\mathbb{T}}
\renewcommand{\rho}{\varrho}
\newcommand{\half}{\tfrac{1}{2}}  
\newcommand{\Cc}{C^\infty_{\rm c}} 
\newcommand{\Cweak}{C_{\rm weak}}
\newcommand{\ov}[1]{\overline{#1}}  
\newcommand{\un}[1]{\underline{#1}}
\newcommand{\ep}{\varepsilon} 
\newcommand{\co}{{\rm co}} 
\newcommand{\diag}{{\rm diag}}
\newcommand{\adj}{{\rm adj}}
 
\renewcommand{\subset}{\subseteq}

\newcommand{\sU}{\mathcal{U}}
\newcommand{\sV}{\mathcal{V}}
\newcommand{\sW}{\mathcal{W}}
\newcommand{\sK}{\mathcal{K}}
\newcommand{\sB}{\mathcal{B}}
\newcommand{\sC}{\mathcal{C}}
\newcommand{\sF}{\mathcal{F}}
\newcommand{\sS}{\mathcal{S}}
\newcommand{\phase}{\mathcal{PH}}
\renewcommand{\setminus}{\smallsetminus}
\newcommand{\opL}{\mathscr{L}}

\newcommand{\ts}{{\rm s}}
\newcommand{\td}{{\rm d}}

\newcommand{\name}[1]{\textsc{#1}} 
\newcommand{\Leray}{\mathbb{P}}

\begin{document} 

\renewcommand{\arraystretch}{1.2}

\title{Non-uniqueness of global-in-time admissible weak solutions to the isentropic compressible Euler equations for a dense set of initial data}

\author{Daniel W.~Boutros\footnote{e-mail: \texttt{dwb42@cam.ac.uk}} \and Simon Markfelder\footnote{corresponding author; e-mail: \texttt{simon.markfelder@uni-konstanz.de}}} 

\date{June 12, 2026}

\maketitle

\bigskip

\centerline{$^{\ast}$ University of Cambridge, Department of Applied Mathematics and Theoretical Physics,}

\centerline{Cambridge CB3 0WA, UK}

\bigskip

\centerline{$^{\dagger}$ Universit\"at Konstanz, Department of Mathematics and Statistics,} 

\centerline{Post office box: 199, 78457 Konstanz, Germany} 

\bigskip

\begin{abstract} 
    In recent years, the technique of convex integration has demonstrated that for some initial data (also referred to as `wild initial data'), many PDE models of mathematical fluid mechanics allow for a multitude of admissible (weak) solutions. This paper is concerned with the question regarding how large the set of such wild initial data is for the isentropic Euler equations. We prove that wild initial data form a dense set in $L^r$ for any $r \in [1,\infty)$. In contrast to existing results in the literature, in this paper `wild initial data' are data which give rise to infinitely many \emph{global}-in-time weak solutions which are \emph{admissible} in the sense that the local energy inequality holds. In other words, the set of initial data with infinitely many admissible weak solutions (independent of the choice of time interval) is dense. A novel part of the construction is that we use a measure-valued (dissipative) solution as the ansatz for the subsolution. This requires several new ideas, in order to ensure the required regularity of the subsolution and to obtain a lower bound for the density. Another crucial ingredient of the proof is that the (local) energy density and the energy flux are constructed as part of the convex integration scheme, in order to obtain solutions which adhere to the local energy inequality.
\end{abstract}

\bigskip

\noindent\textbf{Keywords:} 
Isentropic Euler Equations, Compressible Euler Equations, Wild Initial Data, Non-Uniqueness, Convex Integration, Weak Solutions 

\bigskip

\noindent\textbf{MSC (2020) codes:} 
76N10 (primary), 35A02, 35Q31, 35D30, 35L65 (secondary) 


\bigskip

\tableofcontents

\section{Introduction} \label{sec:intro} 

\subsection{The isentropic compressible Euler system} \label{subsec:intro-euler}

We study the \emph{isentropic compressible Euler system}
\begin{align} 
	\partial_t \rho + \Div \vm &= 0, \label{eq:euler-mass} \\
	\partial_t \vm + \Div \left(\frac{\vm \otimes \vm}{\rho}\right) + \Grad p(\rho) &= 0, \label{eq:euler-mom}
\end{align}
with unknown density $\rho$ and momentum $\vm$, both of which are functions of time $t$ and position in space $\vx$. We are interested in solutions on some time interval $t\in [0,T)$ with final time $T>0$. The spatial domain will be the torus $\T^n$, where $n\in\{2,3\}$ is the space dimension. In this paper $\T^n$ is equal to the periodic extension of $[0,1)^n$, in particular, $|\T^n|=1$. Note furthermore, that $\rho$ and $\vm$ take values in $\R^+$ and $\R^n$, respectively. 

\begin{rem} \label{rem:vacuum} 
	The weak solutions (see Defn.~\ref{defn:weak-sol} below) which are constructed in this paper do not contain any vacuum states. So, at this level, we will assume $\rho>0$. We would like to emphasise that this is not a restriction of our main result (see Thm.~\ref{thm:density} below), which concerns the fact that so-called wild initial data are dense. We also refer to Sects.~\ref{subsec:proof-reduction} and \ref{subsec:proof-outline} as well as Rem.~\ref{rem:dws-additional-facts} for the issue of vacuum. 
\end{rem} 

The pressure $p$, which appears in \eqref{eq:euler-mom}, is given by the $\gamma$-law
\begin{equation} \label{eq:isentropic-pressure}
	p(\rho) = \kappa \,\rho^\gamma \qquad \text{ with }\kappa>0, \ 1<\gamma\leq 1+\tfrac{2}{n}. 
\end{equation} 

\begin{rem} \label{rem:gamma-1} 
	The assumption that $\gamma\leq 1+\tfrac{2}{n}$ is essential\footnote{In particular, we will use this assumption when stating the energy inequality \eqref{eq:dws-energy} for dissipative weak solutions, see Rem.~\ref{rem:dws-energy-defect}, as well as in the proof of Lemma~\ref{lemma:estimate-lambda-un}, see Rem.~\ref{rem:gamma-2}.} in this paper. Note, however, that from a physical point of view this is not a severe restriction. Indeed, the theory of gasdynamics determines that $\gamma = 1 + \frac{2}{f}$ where $f\in \N$ is the number of degrees of freedom. Obviously each space dimension yields a translational degree of freedom and hence $f\geq n$. Thus 
	$$
		\gamma = 1 + \tfrac{2}{f} \leq 1 + \tfrac{2}{n}.
	$$
	In particular, the case of a monoatomic gas, where $\gamma=1+\frac{2}{n}$ (i.e.~all degrees of freedom are of translational type), is included in our consideration. 
\end{rem}

We want to solve the initial value problem, whose solutions $(\rho,\vm)$ shall satisfy the initial condition 
\begin{equation} \label{eq:initial}
	(\rho,\vm)(0,\cdot) = (\rho_0,\vm_0),
\end{equation}
with given functions $\rho_0\in L^\infty(\T^n;\R^+)$ and $\vm_0\in L^\infty(\T^n;\R^n)$, in addition to the partial differential equations (PDEs) \eqref{eq:euler-mass}, \eqref{eq:euler-mom}.

\subsection{Admissible weak solutions} \label{subsec:intro-adm}

In this paper we will look for weak solutions of \eqref{eq:euler-mass}-\eqref{eq:initial} which additionally satisfy the energy inequality
\begin{equation} \label{eq:euler-energy}
	\partial_t \left( \frac{|\vm|^2}{2\rho} + P(\rho) \right) + \Div\left[\left(\frac{|\vm|^2}{2\rho} + P(\rho) + p(\rho) \right) \frac{\vm}{\rho} \right] \leq 0 ,
\end{equation}
in the sense of distributions. In \eqref{eq:euler-energy}, $P$ is the \emph{pressure potential} which is -- in the isentropic case \eqref{eq:isentropic-pressure} -- given by 
\begin{equation} \label{eq:pressure-potential}
	P(\rho) = \frac{\kappa}{\gamma-1} \rho^\gamma.
\end{equation}

More precisely, we are interested in \emph{admissible weak solutions} defined as follows.

\begin{defn} \label{defn:weak-sol} 
\begin{enumerate}
	\item A pair\footnote{On $L^\infty\big((0,T)\times \T^n;\R^+\big)$ we do not only require that the essential supremum is less that $\infty$ but also that the essential infimum is strictly positive. In other words for any $\rho\in L^\infty\big((0,T)\times \T^n;\R^+\big)$ we require existence of $0<r<R<\infty$ such that $r\leq \rho(t,\vx) \leq R$ for a.e.~$(t,\vx)\in (0,T)\times \T^n$. This ensures that the terms $\frac{\vm\otimes \vm}{\rho}$ and $\frac{|\vm|^2}{\rho}$ in \eqref{eq:euler-weak-mom} and \eqref{eq:euler-weak-energy} are integrable.} $(\rho,\vm) \in L^\infty\big((0,T) \times \T^n; \R^+ \times \R^n\big)$ is a \emph{weak solution} of the initial value problem \eqref{eq:euler-mass}-\eqref{eq:initial} if the equations are satisfied in the sense of distributions, i.e.
	\begin{align} 
		\int_0^T \int_{\T^n} \Big[\rho \partial_t \phi + \vm\cdot\Grad \phi\Big]\dx\dt + \int_{\T^n} \rho_0\phi(0,\cdot) \dx &= 0 , \label{eq:euler-weak-mass} \\
		\int_0^T \int_{\T^n} \left[\vm \cdot\partial_t \vphi + \frac{\vm\otimes\vm}{\rho}:\Grad \vphi + p(\rho)\Div \vphi\right]\dx\dt + \int_{\T^n} \vm_0\cdot\vphi(0,\cdot) \dx &= 0 \label{eq:euler-weak-mom}
	\end{align}
	for all test functions $(\phi,\vphi) \in \Cc\big([0,T) \times \T^n; \R\times \R^n\big)$.
	 
	\item A weak solution $(\rho,\vm)$ of the initial value problem \eqref{eq:euler-mass}-\eqref{eq:initial} is called \emph{admissible} if the energy inequality \eqref{eq:euler-energy} (with pressure potential \eqref{eq:pressure-potential}) holds in the sense of distribution, i.e.
	\begin{align} 
		\int_0^T \int_{\T^n} \left[\bigg(\frac{|\vm|^2}{2\rho} + P(\rho)\bigg) \partial_t \psi + \bigg(\frac{|\vm|^2}{2\rho} + P(\rho) + p(\rho)\bigg)\frac{\vm}{\rho}\cdot\Grad \psi \right]\dx\dt \qquad & \notag \\ 
		+ \int_{\T^n} \bigg(\frac{|\vm_0|^2}{2\rho_0} + P(\rho_0)\bigg)\psi(0,\cdot) \dx &\geq 0 \label{eq:euler-weak-energy}
	\end{align}
	for all $\psi \in \Cc\big([0,\infty) \times \R^2;\R^+_0\big)$.
\end{enumerate}
\end{defn}

\begin{rem} \label{rem:better-regularity-1}
	In this paper we will solely deal with admissible weak solutions which even satisfy 
	$$
		 \rho \in C^1\big([0,T] \times \T^n; \R^+\big), \qquad  \vm\in \Cweak\big([0,T];L^2(\T^n;\R^n)\big) \cap L^\infty\big((0,T)\times \T^n; \R^n\big),
	$$
	cf.~Rem.~\ref{rem:better-regularity-2} below. In particular, the density is even continuously differentiable and the momentum is weakly continuous in time. 
\end{rem}

\subsection{Non-uniqueness and density of wild initial data} \label{subsec:intro-density} 

Building upon their earlier work on the \emph{incompressible} Euler equation \cite{DelSze09}, \name{De~Lellis}-\name{Sz{\'e}kelyhidi}~\cite{DelSze10} proved that the initial value problem \eqref{eq:euler-mass}-\eqref{eq:initial} surprisingly admits infinitely many admissible weak solution for certain initial data. The technique to construct those solutions is called \emph{convex integration}. Initial data which allow for infinitely many admissible weak solutions are usually termed \emph{wild initial data}. In several subsequent works, one has tackled the natural question on how large the class of wild data is. 

In the incompressible case, \name{Sz{\'e}kelyhidi}-\name{Wiedemann}~\cite{SzeWie12} were able to prove that the class of wild data are $L^2$-dense in the set of all possible $L^2$ initial data, see \cite[Cor.~3]{SzeWie12}. Notably, their solutions only satisfy a global version of the energy inequality (see \eqref{eq:global-energy} below for the analogous global energy inequality in the compressible setting). 

For the compressible Euler system \eqref{eq:euler-mass}, \eqref{eq:euler-mom} a large class of wild data was found by \name{Chiodaroli}~\cite{Chiodaroli14} and also by \name{Feireisl}~\cite{Feireisl14}, who both showed that for a (sufficiently regular) initial density there exists a wild initial momentum. While the solutions constructed in \cite{Chiodaroli14} are only local-in-time, the admissible weak solutions produced in \cite{Feireisl14} even exist globally in time. 

A large focus has also been put on so-called \emph{Riemann initial data} \cite{ChiDelKre15,ChiKre14,ChiKre18,KliMar18_1,Markfelder,BreChiKre18} with the result that Riemann data are wild as soon as the so-called \emph{1-D solution} contains a shock wave. 

Naturally, one has addressed the question whether wild data are dense also in the compressible setting. In this context, \name{Chen}-\name{Vasseur}-\name{Yu}~\cite{CheVasYu21} showed that wild data are indeed dense, however the solutions in \cite{CheVasYu21} only satisfy the global energy inequality 
\begin{equation} \label{eq:global-energy}
	\int_{\T^n} \left( \frac{|\vm(t,\cdot)|^2}{2\rho(t,\cdot)} + P(\rho(t,\cdot)) \right) \dx \leq \int_{\T^n} \left( \frac{|\vm_0|^2}{2\rho_0} + P(\rho_0) \right) \dx 
\end{equation}
rather than the local one \eqref{eq:euler-energy}. Still the solutions constructed in \cite{CheVasYu21} are defined globally in time. 

More recently \name{Chiodaroli}-\name{Feireisl}~\cite{ChiFei24_1} proved another result regarding density of wild data. In contrast to \cite{CheVasYu21}, the solutions found in \cite{ChiFei24_1} do satisfy the local energy inequality \eqref{eq:euler-energy}, i.e.~they are indeed admissible weak solutions in the sense of Defn.~\ref{defn:weak-sol}. The shortcoming of \cite{ChiFei24_1} is that -- in contrast to \cite{CheVasYu21} -- the solutions considered in \cite{ChiFei24_1} only exist locally in time. 

One may summarise the preceding paragraphs as follows. The question on the density of wild data may depend on the exact definition of ``wild initial datum''. In \cite{CheVasYu21} an initial datum is called ``wild'' if there are infinitely many \emph{global}-in-time weak solutions satisfying the \emph{global} energy inequality \eqref{eq:global-energy}. On the other hand, in \cite{ChiFei24_1} an initial datum is ``wild'' if there exist infinitely many \emph{local}-in-time \emph{admissible} weak solutions. In the latter case, the existence time may depend on the initial data itself, see also Rem.~\ref{rem:local-vs-global-in-time} below. 

The aim of the present paper is to prove density of wild data where we call an initial datum ``wild'' if there are infinitely many \emph{global}-in-time \emph{admissible} weak solutions, see Defn.~\ref{defn:wild-data} below. 

Let us finally remark, that a result which is analogous to \cite{ChiFei24_1} has been obtained by \name{Chiodaroli}-\name{Feireisl}~\cite{ChiFei24_2} also in the context of the \emph{full} compressible Euler equations.

\subsection{Outline of this paper} \label{subsec:intro-outline}

The paper is organised as follows. In Sect.~\ref{sec:result} we state our main result Thm.~\ref{thm:density}. In Sect.~\ref{sec:ci} we recall some convex integration results from the literature that are necessary to prove Thm.~\ref{thm:density}. With those results at hand, the proof of Thm.~\ref{thm:density} boils down to finding a suitable subsolution. This subsolution will be constructed from a so-called dissipative weak solution which coincides with the strong solution for small times. Both notions of solution will be recalled in Sect.~\ref{sec:bb}. Sect.~\ref{sec:proof} represents the main part of this paper and is devoted to the proof of Thm.~\ref{thm:density}. In Sect.~\ref{sec:remarks} we compare our result Thm.~\ref{thm:density} with \cite[Thm.~1.3]{ChiFei24_1}. The appendix summarises some further tools. Moreover, in the appendix we extend some two-dimensional results which we cite from the literature to the three-dimensional case.

\section{Statement of the main result} \label{sec:result}

As mentioned in Sect.~\ref{subsec:intro-density} above, the question whether wild data are dense may depend on the definition of ``wild initial datum''. In this paper we use the following definition.

\begin{defn} \label{defn:wild-data}
	We call an initial datum $(\rho_{0},\vm_{0})\in L^\infty(\T^n; \R^+\times \R^n)$ \emph{wild} if for any $T>0$ there exist infinitely many admissible weak solutions 
	\begin{equation} \label{eq:naive-solution-space}
		(\rho,\vm) \in L^\infty\big((0,T) \times \T^n; \R^+ \times \R^n\big).
	\end{equation}
\end{defn}

Now we are ready to state our main result.

\begin{thm} \label{thm:density}
	Let $r\in [1,\infty)$, $(\rho_0,\vm_0)\in L^r(\T^n; \R^+_0\times \R^n)$ and $\ep>0$. Then there exists a wild initial datum $(\rho_{0,\ep},\vm_{0,\ep})\in L^\infty(\T^n; \R^+\times \R^n)$ with 
	\begin{equation} \label{eq:thm-density-close}
		\|\rho_{0,\ep} - \rho_0\|_{L^r(\T^n)} < \ep, \quad \text{ and } \quad \|\vm_{0,\ep} - \vm_0\|_{L^r(\T^n)} <\ep .
	\end{equation}
\end{thm}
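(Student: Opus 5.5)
Throughout, the target is a fixed smooth datum $(\rho_{0,\ep},\vm_{0,\ep})$ with $\rho_{0,\ep}$ bounded away from zero, close to $(\rho_0,\vm_0)$ in $L^r$, which we show is wild in the sense of Defn.~\ref{defn:wild-data}.

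\textbf{Step 1 (reduction to smooth data).} Since smooth functions are dense in $L^r$, we first mollify $(\rho_0,\vm_0)$ and replace $\rho_0$ by $\rho_0+\delta$ for a small $\delta>0$. This gives smooth data $(\tilde\rho_0,\tilde\vm_0)$ with $\tilde\rho_0\geq\delta>0$, within $\ep/2$ of $(\rho_0,\vm_0)$ in $L^r$. Vacuum in the original datum is harmless, because closeness is only required in $L^r$. It then suffices to produce a wild datum within $\ep/2$ of $(\tilde\rho_0,\tilde\vm_0)$, and we will take it to be an $L^\infty$-small perturbation.

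\textbf{Step 2 (dissipative ansatz, global in time).} For the smooth, non-vacuum datum there is a local classical solution on some $[0,T_*)$. We extend it to a global-in-time dissipative (measure-valued) weak solution that coincides with the strong one for $t<T_*$. Existence of such solutions, and weak–strong uniqueness, are recalled in Sect.~\ref{sec:bb}, and the energy inequality requires $\gamma\leq 1+\tfrac2n$.

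\textbf{Step 3 (subsolution).} The measure-valued solution is not regular enough to serve as a subsolution. We therefore regularise it (mollify in $t$ and $\vx$) on a time interval $[\tau,\infty)$ with $\tau<T_*$, glued to the strong solution on $[0,\tau]$. The Reynolds and energy defects are absorbed into a strict subsolution. The key point is that near $t=\tau$ we must create a strict subsolution in the sense of the convex integration results of Sect.~\ref{sec:ci}. These results include the energy density and the energy flux as unknowns, so that the local energy inequality survives.

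We expect the main obstacles here to be two:
\begin{itemize}
\item \emph{A uniform positive lower bound on the mollified density for all times.} Our first attempt is to shift the density by a constant and compensate in the pressure term. The alternative is to use the bounds from the dissipative formulation to control the measure of near-vacuum regions.
\item \emph{Strictness of the subsolution inequality in $C^1$/continuity classes.} We handle this by adding a small positive quantity to the energy density and using the dissipation defect.
\end{itemize}

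\textbf{Step 4 (wildness and closeness).} Convex integration applied to the strict subsolution on $(\tau,T)$, for arbitrary $T$, yields infinitely many admissible weak solutions. Each is glued to the strong solution on $[0,\tau]$. The energy inequality is preserved across $t=\tau$ because the energy is weakly continuous and the traces match. Since the initial datum is untouched, namely $(\tilde\rho_0,\tilde\vm_0)$ itself, this gives \eqref{eq:thm-density-close}.

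If the convex integration step forces the perturbation to act at $t=0$, we instead run the scheme from time $0$, starting from a datum perturbed by a small oscillation. That perturbation is small in $L^r$, though not in $L^\infty$, and closeness still holds.
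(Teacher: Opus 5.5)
There is a genuine gap, and it sits in Step~4, which you treat as routine. Your main route claims that the smoothed datum $(\tilde{\rho}_0,\tilde{\vm}_0)$ is itself wild. That is false. This datum has a classical solution on $[0,T_*)$. Every admissible weak solution satisfies the global energy inequality (take $\psi$ independent of $\vx$). So by weak--strong uniqueness (cf.\ Prop.~\ref{prop:wsu}), for $T<T_*$ there is exactly one admissible weak solution on $(0,T)$, and Defn.~\ref{defn:wild-data} fails. The paper points this out explicitly: smooth data can give non-uniqueness on the long run but are never wild.

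The gluing at $t=\tau$ also fails on its own terms. The solutions from Prop.~\ref{prop:ci-nonuniqueness} satisfy the energy inequality only with the initial term $\tfrac n2(\ov{q}-p(\ov{\rho}))+P(\ov{\rho})$ coming from the subsolution. Wherever the subsolution is strict, the trace of the matrix condition shows this term exceeds $\frac{|\vm|^2}{2\rho}+P(\rho)$. Hence the glued function has an upward energy jump at $\tau$ and is not admissible. Matching weak traces of $\vm$ do not rule this out, since the energy is only weakly lower semicontinuous; this is exactly the issue behind \eqref{eq:0001} and Rem.~\ref{rem:ci-nonuniqueness}. Nor can you demand that the subsolution lie in $\sK$ at $\tau$ with the strong solution as its trace, because weak--strong uniqueness forbids that as well.

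Your fallback sentence is where the actual proof lives, and it is left empty. The wild datum is not ``the datum plus a small oscillation'' chosen at will. In the paper it is the trace $(\ov{\rho},\widetilde{\vm})(T_0,\cdot)$, at an interior time $T_0$, of the modified subsolution from Prop.~\ref{prop:ci-data}. That subsolution lies in $\sK$ at $T_0$ and in $\sU$ otherwise, and $T_0$ then serves as the new initial time for the glued global subsolution.

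The only distance control available is $\|\widetilde{\vm}(T_0)-\ov{\vm}(T_0)\|_{L^2}^2\le\|\widetilde{\vm}(T_0)\|_{L^2}^2-\|\ov{\vm}(T_0)\|_{L^2}^2+\ep$. Its right-hand side is small only because $\widetilde{\vm}(T_0)\in\sK$ forces $|\widetilde{\vm}|^2\le n(Q-p(\ov{\rho}))\ov{\rho}$, where $Q=\ov{q}+O(\ep^2)$ and $\ov{q}$ is nearly $\frac{|\ov{\vm}|^2}{n\ov{\rho}}+p(\ov{\rho})$. Such a small $Q$ comes from Prop.~\ref{prop:U}~\ref{item:U-2}. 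That in turn requires the subsolution near $T_0$ to be quantitatively close to $\sK$:
\begin{itemize}
\item the Reynolds defect must vanish there, which is why the strong solution is used for small times;
\item $\ov{\lambda}$ must be small;
\item the energy flux must be $\delta$-close to the rigid one, which the paper gets via a Poisson construction (Lemma~\ref{lemma:estimate-F}).
\end{itemize}
The same pointwise bound gives $\|\vm_{0,\ep}\|_{L^\infty}\le\|\vm_0\|_{L^\infty}+\ep$. Together with $L^2$-closeness, this yields $L^r$-closeness for $r>2$ by interpolation. Your claim ``small in $L^r$'' has no such mechanism behind it.

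Finally, the strong solution lies in $\sK$, not in $\sU$, so it cannot be glued in as part of a strict subsolution. The paper instead mollifies both the strong and the dissipative solution with the same $\sigma$, shifts both densities by the same $r$, and uses the same $\lambda$-correction. The local and global subsolutions then coincide on $[0,\tfrac12 T_{\ts}]$ and can be joined with cut-offs.
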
 

\begin{rem} \label{rem:better-regularity-2} 
	We will prove a stronger version of Thm.~\ref{thm:density}, namely where \eqref{eq:naive-solution-space} in Defn.~\ref{defn:wild-data} is replaced by 
	$$
		\rho \in C^1\big([0,T] \times \T^n; \R^+\big), \qquad  \vm\in \Cweak\big([0,T];L^2(\T^n;\R^n)\big) \cap L^\infty\big((0,T)\times \T^n; \R^n\big),
	$$
	see also Rem.~\ref{rem:better-regularity-1}. In particular, the admissible weak solution which we construct in the proof of Thm.~\ref{thm:density} are weakly continuous in time.
\end{rem}

\begin{rem} \label{rem:local-vs-global-in-time} 
	The difference between our density result (Thm.~\ref{thm:density}) and the one by \name{Chiodaroli}-\name{Feireisl}~\cite[Thm.~1.3]{ChiFei24_1} lies in the definition of ``wild initial datum'', cf.~Sect.~\ref{subsec:intro-density}. While we require existence of infinitely many admissible weak solutions for any $T>0$, \name{Chiodaroli}-\name{Feireisl} only ask for existence of such a time $T>0$, which may (and will) depend on the datum itself. In fact, the set of wild data in the sense of Defn.~\ref{defn:wild-data} is smaller than the corresponding set considered in \cite{ChiFei24_1}. Consequently our result (Thm.~\ref{thm:density}) is indeed stronger than \cite[Thm.~1.3]{ChiFei24_1}. We discuss this in more detail in Sect.~\ref{sec:remarks} below. 
\end{rem}

\begin{rem} 
	In Rem.~\ref{rem:local-vs-global-in-time} above we explained that our result Thm.~\ref{thm:density} is global-in-time. On the other hand, it is notable that wild data also give rise to infinitely many admissible weak solutions on any arbitrarily \emph{short} time interval. So simply speaking, wild initial data are ``wild immediately''. In contrast, there exist initial data (in particular, Lipschitz continuous data as shown in \cite[Cor.~1.2]{ChiDelKre15}, or even smooth data as proved in \cite{CKMS21}) which lead to a unique solution locally in time but admit infinitely many admissible weak solutions on the long run. Despite of that, such initial data are not wild in the sense of Defn.~\ref{defn:wild-data}. 
\end{rem}

\section{Convex integration} \label{sec:ci} 

In order to prove Thm.~\ref{thm:density} we need two convex integration results that have been shown by the authors in \cite{BouMarTit26pre}. The first one (Prop.~\ref{prop:ci-nonuniqueness} below and Thm.~2.15 in \cite{BouMarTit26pre}) yields infinitely many weak solutions if a so-called \emph{subsolution} exists. Those weak solutions are even admissible as soon as the corresponding subsolution attains the initial energy. For naively chosen subsolutions the latter is in general not true, cf.~Rem.~\ref{rem:ci-nonuniqueness} below. To this end, one needs another result from \cite{BouMarTit26pre} (Prop.~\ref{prop:ci-data} below and Thm.~2.29 in \cite{BouMarTit26pre}), which modifies the subsolution as well as the initial data in order to attain the initial energy. 

Both of these results are shown in \cite{BouMarTit26pre} for general differential inclusions under certain assumptions. In order to apply them in the context of the isentropic Euler equations \eqref{eq:euler-mass}, \eqref{eq:euler-mom}, we need to rewrite the latter as a differential inclusion. The literature provides in principle two different ways to do so. The way which is mostly used (see e.g.~\cite{DelSze10,Chiodaroli14,Feireisl14,ChiDelKre15,ChiKre14,ChiKre18,KliMar18_1,CheVasYu21,ChiFei24_1}, just to list a few papers which utilise this ansatz) keeps the density and the kinetic energy fixed. This ansatz turns out to be insufficient for our needs. Another approach is to just fix the density and include the energy inequality into the differential inclusion. This approach was implemented by the second author in \cite{Markfelder24} (see also \name{Gebhard}-\name{Kolumb{\'a}n}~\cite{GebKol22} for a similar ansatz in the context of the incompressible Euler equations). We will use this approach in the current paper, too.

\subsection{Preliminaries} \label{subsec:ci-prel}

As mentioned before, we will make use of the general convex integration framework which was established in \cite{BouMarTit26pre}. We will apply this framework to the isentropic Euler equations \eqref{eq:euler-mass}, \eqref{eq:euler-mom} with energy inequality \eqref{eq:euler-energy} similarly to \cite{Markfelder24}. So for the preliminaries, we may follow \cite[Sect.~3.1]{Markfelder24}. 

First, we replace all non-linearities in the PDEs \eqref{eq:euler-mass}, \eqref{eq:euler-mom} and inequality \eqref{eq:euler-energy} by new unknowns $\mU,q,E$ and $\vF$ to obtain
\begin{align}
	\partial_t \rho + \Div \vm &= 0, \label{eq:eulerlin-mass-prime} \\
	\partial_t \vm + \Div (\mU + q\id) &= 0, \label{eq:eulerlin-mom-prime} \\
	\partial_t E + \Div \vF &\leq 0 . \label{eq:eulerlin-energy-prime}
\end{align} 
Note that \eqref{eq:eulerlin-mass-prime}, \eqref{eq:eulerlin-mom-prime} and \eqref{eq:eulerlin-energy-prime} coincide with \eqref{eq:euler-mass}, \eqref{eq:euler-mom} and \eqref{eq:euler-energy}, respectively, as soon as 
\begin{align} 
	\mU + q\id &= \frac{\vm\otimes \vm}{\rho} + p(\rho)\id , \label{eq:id-Uq}\\
	E&= \frac{|\vm|^2}{2\rho} + P(\rho) , \label{eq:id-E-prime}\\
	\vF&= \left(\frac{|\vm|^2}{2\rho} + P(\rho) + p(\rho)\right) \frac{\vm}{\rho} . \label{eq:id-F-prime}
\end{align}
We require $\mU$ to take values in $\symz{n}$, where the latter denotes the space of symmetric, trace-less $n\times n$-matrices, while the quantity $q$ represents the trace part of the right-hand side of \eqref{eq:id-Uq}. The functions $q$ and $E$ are scalar-valued while $\vF$ is vector-valued. Taking the trace in \eqref{eq:id-Uq}, we find
\begin{equation} \label{eq:trace-id-Uq}
	n q = \frac{|\vm|^2}{\rho} + n p(\rho) .
\end{equation}
Hence, as long as \eqref{eq:id-Uq} holds, \eqref{eq:id-E-prime} and \eqref{eq:id-F-prime} are equivalent to
\begin{align} 
	E&= \tfrac{n}{2} \big(q - p(\rho)\big) + P(\rho) , \label{eq:id-E}\\
	\vF&= \frac{\frac{n}{2} \big(q - p(\rho)\big) + P(\rho) + p(\rho)}{\rho} \vm , \label{eq:id-F} 
\end{align}
respectively.

Like in \cite{Markfelder24} we will treat $\rho$ as a parameter when implementing convex integration, which means that $\rho$ remains fixed while the other unknowns are constructed by repeatedly adding oscillations. Since the right-hand side of \eqref{eq:id-E} is linear in $q$, we may eliminate $E$ in \eqref{eq:eulerlin-energy-prime} via \eqref{eq:id-E}. All in all, we rewrite the system  \eqref{eq:eulerlin-mass-prime}, \eqref{eq:eulerlin-mom-prime}, \eqref{eq:eulerlin-energy-prime} as the following differential inclusion:
\begin{align} 
	\partial_t \rho + \Div \vm &= 0 ,  \label{eq:eulerlin-mass} \\
	\partial_t \vm + \Div (\mU + q\id) &= 0 , \label{eq:eulerlin-mom} \\
	\partial_t \Big(\tfrac{n}{2} \big(q - p(\rho)\big) +  P(\rho)\Big) + \Div \vF &\leq 0, \label{eq:eulerlin-energy} 
\end{align} 
together with the family of constitutive sets
\begin{equation} \label{eq:K}
	\sK_{\rho,Q}:= \left\{ (\vm,\mU,q,\vF)\in \phase \,\Big| \, \text{\eqref{eq:id-Uq} and \eqref{eq:id-F} hold, and }q\leq Q \right\}. 
\end{equation}
Here 
$$
	\phase := \R^n \times \symz{n} \times \R \times \R^n
$$
is an abbreviation for the ``extended'' phase space, i.e.~the space in which a tuple $(\vm,\mU,q,\vF)$ lives. 

The bound $Q$ in the definition of the constitutive sets $\sK_{\rho,Q}$ (see \eqref{eq:K}) is required in order to make them compact. We refer to \cite[Sect.~3]{Markfelder24} for more details. In contrast to \cite{Markfelder24}, where $Q$ is constant, we consider $Q$ to be $(t,\vx)$-dependent, in particular $Q\in C^1\big([0,T]\times \T^n;\R^+\big)$.

\subsection{Existence of infinitely many admissible weak solutions} \label{subsec:ci-nonuniqueness}

When applying the general framework \cite[Thm.~2.15]{BouMarTit26pre} to the differential inclusion \eqref{eq:eulerlin-mass}-\eqref{eq:K}, we obtain the following statement.

\begin{prop} \label{prop:ci-nonuniqueness} 
	Let $T>0$, $n\in\{2,3\}$. Assume there exist $\ov{\rho}\in C^1\big([0,T]\times\T^n;\R^+\big)$, and 
	$$
		(\ov{\vm},\ov{\mU},\ov{q},\ov{\vF})\in \Cweak\big([0,T];L^2(\T^n;\phase)\big) \cap C^1\big((0,T)\times \T^n;\phase\big) 
	$$ 
	with the following properties:
	\begin{itemize} 
		\item The tuple $(\ov{\rho},\ov{\vm},\ov{\mU},\ov{q},\ov{\vF})$ satisfies the partial differential equations and inequalities \eqref{eq:eulerlin-mass}-\eqref{eq:eulerlin-energy} pointwise for all $(t,\vx)\in (0,T)\times\T^n$;
		
		\item There exists $Q\in C^1\big([0,T]\times \T^n;\R^+\big)$ such that 
		\begin{equation}  \label{eq:ci-subs} 
			(\ov{\vm},\ov{\mU},\ov{q},\ov{\vF})(t,\vx) \ \in\ \sU_{\ov{\rho}(t,\vx),Q(t,\vx)} \qquad \text{ for all }(t,\vx)\in (0,T)\times \T^n , 
		\end{equation} 
		where $\sU_{\rho,Q}:= \interior{\big((\sK_{\rho,Q})^\co\big)}$, i.e.~$\sU_{\rho,Q}$ is the interior of convex hull of $\sK_{\rho,Q}$. 
	\end{itemize}
	
	Then there exist infinitely many 
	$$
		\vm\in \Cweak\big([0,T];L^2(\T^n;\R^n)\big) \cap L^\infty\big((0,T)\times \T^n; \R^n\big)
	$$ 
	such that $(\rho\equiv\ov{\rho},\vm)$ solve the isentropic Euler equations in the following sense: 
	\begin{align} 
		\int_0^T\int_{\T^n} \big[\rho \partial_t\phi + \vm\cdot \Grad\phi \big] \dx\dt - \left[\int_{\T^n} \ov{\rho} \phi \dx \right]_{t=0}^{t=T} &= 0 , \label{eq:ci-sol-mass}\\ 
		\int_0^T\int_{\T^n} \bigg[\vm\cdot\partial_t\vphi + \frac{\vm\otimes\vm}{\rho} : \Grad\vphi + p(\rho) \Div\vphi\bigg] \dx\dt - \left[\int_{\T^n} \ov{\vm}\cdot \vphi\dx \right]_{t=0}^{t=T} &= 0 , \label{eq:ci-sol-mom} \\  
		\int_0^T\int_{\T^n} \bigg[\left( \frac{|\vm|^2}{2\rho} + P(\rho)\right) \partial_t\psi + \left( \frac{|\vm|^2}{2\rho} + P(\rho) + p(\rho) \right) \frac{\vm}{\rho} \cdot \Grad\psi \bigg] \dx\dt \ \;\quad \qquad & \notag \\
		- \left[\int_{\T^n} \Big(\tfrac{n}{2}\big(\ov{q} - p(\ov{\rho})\big) + P(\ov{\rho}) \Big) \psi \dx \right]_{t=0}^{t=T} &\geq 0 , \label{eq:ci-sol-energy} 
	\end{align} 
	for all test functions $(\phi,\vphi,\psi)\in \Cc\big([0,T]\times \T^n;\R\times\R^n\times \R_0^+\big)$.
\end{prop}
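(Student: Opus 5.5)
The plan is to derive Prop.~\ref{prop:ci-nonuniqueness} from the abstract result \cite[Thm.~2.15]{BouMarTit26pre}: check its structural hypotheses for the differential inclusion \eqref{eq:eulerlin-mass}-\eqref{eq:K}, then read off the weak formulations \eqref{eq:ci-sol-mass}-\eqref{eq:ci-sol-energy}. The density $\ov{\rho}$ is a frozen parameter throughout, so the mass equation \eqref{eq:eulerlin-mass} becomes a linear constraint $\Div\vm = -\partial_t\ov{\rho}$ with a $C^0$ right-hand side. The unknowns are $z=(\vm,\mU,q,\vF)\in\phase$, and they enter linearly in \eqref{eq:eulerlin-mass}-\eqref{eq:eulerlin-energy}.

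The hypotheses of the abstract theorem are of three kinds, and I would verify them as follows (following \cite[Sect.~3]{Markfelder24}, now with $(t,\vx)$-dependent parameters $(\ov{\rho},Q)$).
\begin{enumerate}
\item \emph{The constitutive sets.} $\sK_{\rho,Q}$ is compact, since $q\leq Q$ together with \eqref{eq:trace-id-Uq} bounds $|\vm|^2\leq n\rho(Q-p(\rho))$, and then $\mU,\vF$ are continuous functions of $\vm$. The map $(\rho,Q)\mapsto\sK_{\rho,Q}$ is Hausdorff-continuous on $\{\rho>0,\ Q>p(\rho)\}$, and the interior of the convex hull is nonempty there.
\item \emph{The wave cone.} The linear operator has a wave cone rich enough that $\sK^\co$ is the relevant hull. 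Concretely, for $z_1,z_2\in\sK_{\rho,Q}$ (or within $\sU_{\rho,Q}$), the segment direction $z_2-z_1$ must be approximable by plane-wave directions of the system $\Div\vm=0$, $\partial_t\vm+\Div(\mU+q\id)=0$, $\partial_t(\tfrac n2 q)+\Div\vF=0$. Since the frozen-parameter energy term $-\tfrac n2 p(\rho)+P(\rho)$ drops out of the oscillatory part, this reduces to the same Fourier-symbol computation as in \cite{Markfelder24}, with one extra conservation law.
\item \emph{Localized plane waves.} Potentials for such plane waves must exist. This is a localization lemma, and the paper announces it is extended from $n=2$ to $n=3$ in the appendix.
\end{enumerate}

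With these in hand, \cite[Thm.~2.15]{BouMarTit26pre} yields infinitely many $z=(\vm,\mU,q,\vF)$ with the following properties:
\begin{itemize}
\item $z-\ov z$ is compactly supported in $(0,T)\times\T^n$ in the weak sense, i.e.\ the boundary values at $t=0,T$ agree with those of $\ov z$ in $\Cweak$;
\item $z$ solves the linear system with the same inequality as in \eqref{eq:eulerlin-energy};
\item $z(t,\vx)\in\sK_{\ov\rho(t,\vx),Q(t,\vx)}$ a.e.
\end{itemize}
There are infinitely many because the Baire category argument gives a residual set in the closure of the subsolution space. The argument needs these to be distinct, and they are: $\ov z\in\sU$ strictly, so $\ov z$ itself is not in $\sK$, while generic limits are.

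On $\sK$, identity \eqref{eq:id-Uq} turns the momentum equation into \eqref{eq:euler-mom}. By \eqref{eq:id-E} and \eqref{eq:id-F}, the energy quantity $\tfrac n2(q-p)+P$ equals $\tfrac{|\vm|^2}{2\rho}+P$ and $\vF$ equals the physical flux. Testing against $\phi,\vphi,\psi\in\Cc([0,T]\times\T^n)$ and integrating by parts in time produces boundary terms involving $z$ at $t=0,T$. These coincide with those of $\ov z$ because the perturbation vanishes near $t=0$ and $t=T$ weakly. This gives exactly \eqref{eq:ci-sol-mass}-\eqref{eq:ci-sol-energy}.

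The subtle points are two. First, the energy boundary term carries $\ov q$, not $q$, because $q$ itself is perturbed in the interior. Second, the inequality in \eqref{eq:eulerlin-energy} is preserved in the limit since it is linear in $z$ and tested with nonnegative $\psi$. Boundedness of $\vm$ in $L^\infty$ follows from $q\leq Q$ with $Q$ continuous on the compact set $[0,T]\times\T^n$.

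The main obstacle I expect is the wave-cone and convex-hull step with variable parameters. We must show that, uniformly on compact subsets of the parameter region, any $z\in\sU_{\rho,Q}$ admits admissible oscillations whose size is quantified by the distance to $\sK_{\rho,Q}$, so that the perturbation property of the abstract framework holds. A second subtlety is continuity of the subsolution up to $t=0,T$ only in $\Cweak$, while $C^1$ regularity holds only in the open interior. I would handle this by performing the construction on an exhausting sequence of compact subsets of $(0,T)\times\T^n$, which \cite[Thm.~2.15]{BouMarTit26pre} is presumably designed to allow.
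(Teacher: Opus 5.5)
\textbf{Gap: the identification functional (the fourth structural assumption) is missing.}

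Your overall route is the paper's: apply \cite[Thm.~2.15]{BouMarTit26pre} after checking its hypotheses for \eqref{eq:eulerlin-mass}--\eqref{eq:K}, then read off \eqref{eq:ci-sol-mass}--\eqref{eq:ci-sol-energy} from the boundary values shared with the subsolution. Your items (1)--(3) roughly correspond to the three assumptions the paper imports from \cite[Lemmas~3.2, 3.3 and 3.5]{Markfelder24}, with Lemma~\ref{lemma:suitable-operator-3D} covering $n=3$.

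The gap is that you never supply the fourth structural assumption. It is exactly the one that makes ``$z(t,\vx)\in\sK_{\ov{\rho}(t,\vx),Q(t,\vx)}$ a.e.'' true, and you only assert this as an output of the theorem. The Baire argument gives less. It shows that a generic limit lies in the compact hull $(\sK_{\rho,Q})^{\co}$ and saturates a concave quantity, here $D\big((\rho,Q),(\vm,\mU,q,\vF)\big)=n(q-p(\rho))-|\vm|^2/\rho$. This $D$ is $\geq 0$ on the hull and vanishes on $\sK_{\rho,Q}$. To finish, one must prove a rigidity statement: $D=0$ together with $(\vm,\mU,q,\vF)\in(\sK_{\rho,Q})^{\co}$ forces $(\vm,\mU,q,\vF)\in\sK_{\rho,Q}$. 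This is not automatic. The flux relation \eqref{eq:cond-K-2} is neither convex nor concave in $(\vm,q)$, and $D$ does not involve $\vF$ at all.

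The paper proves the rigidity statement as follows.
\begin{enumerate}
\item On the hull one has $q\le Q$ and $\frac{\vm\otimes\vm}{\rho}+p(\rho)\id-\mU-q\id\le 0$. The condition $D=0$ says this negative semi-definite matrix is traceless, hence zero, which gives \eqref{eq:cond-K-1}.
\item Write $(\vm,\mU,q,\vF)=\sum_i\tau_i(\vm_i,\mU_i,q_i,\vF_i)$ with points of $\sK_{\rho,Q}$ (Carath\'eodory). Since $q\mapsto \rho n(q-p(\rho))$ is affine, this yields $|\sum_i\tau_i\vm_i|^2=\sum_i\tau_i|\vm_i|^2$.
\item By strict convexity of $\vm\mapsto|\vm|^2$, all $\vm_i=\vm$. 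Then $\vF=\sum_i\tau_i\vF_i$ is affine in the $q_i$, and \eqref{eq:cond-K-2} follows.
\end{enumerate}

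Your suggested alternative, quantifying the oscillations by ${\rm dist}(z,\sK_{\rho,Q})$, does not bypass this. On the compact hull, the statement ``$D$ small implies ${\rm dist}(\cdot,\sK_{\rho,Q})$ small'' is equivalent to the same rigidity statement. Likewise, a perturbation estimate in terms of the distance would have to fail near any hull point with $D=0$ outside $\sK_{\rho,Q}$. So some form of the rigidity statement must be proved either way.

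By contrast, the points you flag as the main obstacles need no new work here. Uniformity in the variable parameters $(\ov{\rho},Q)$, the $\Cweak$ behaviour at $t=0,T$, and the existence of infinitely many distinct solutions are all handled inside the abstract theorem and the cited lemmas. Infinitude comes from the residual set there, not from the fact that $\ov z\notin\sK$.
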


\begin{rem} \label{rem:ci-nonuniqueness} 
	A tuple $(\ov{\rho},\ov{\vm},\ov{\mU},\ov{q},\ov{\vF})$ which satisfies the assumptions of\footnote{In particular, \eqref{eq:eulerlin-mass}-\eqref{eq:eulerlin-energy} as well as \eqref{eq:ci-subs} hold for all $(t,\vx)\in (0,T)\times \T^n$.} Prop.~\ref{prop:ci-nonuniqueness}, is usually called a \emph{(strict) subsolution}. So, simply speaking, Prop.~\ref{prop:ci-nonuniqueness} states that if there exists a subsolution, then there are infinitely many solutions whose initial (and terminal) condition is given by the subsolution. According to \eqref{eq:ci-sol-mass} and \eqref{eq:ci-sol-mom}, these solutions are indeed weak solutions of the isentropic Euler equations \eqref{eq:euler-mass}, \eqref{eq:euler-mom} with initial data given by $(\ov{\rho},\ov{\vm})(0,\cdot)$. If we can achieve that 
	\begin{equation} \label{eq:0001}
		(\ov{\vm},\ov{\mU},\ov{q},\ov{\vF})(0,\vx) \ \in\ \sK_{\ov{\rho}(0,\vx),Q(0,\vx)} \qquad \text{ for a.e. }\vx\in \T^n ,
	\end{equation}
	then by \eqref{eq:ci-sol-energy} those weak solutions are even admissible, since \eqref{eq:0001} implies 
	$$
		\frac{n}{2}\big(\ov{q}(0,\vx) - p(\ov{\rho}(0,\vx))\big) + P(\ov{\rho}(0,\vx)) = \frac{|\ov{\vm}(0,\vx)|^2}{2\ov{\rho}(0,\vx)} + P(\ov{\rho}(0,\vx)),
	$$
	cf.~\eqref{eq:id-E-prime}, \eqref{eq:id-E}. A general subsolution will however \emph{not} satisfy \eqref{eq:0001}. In this case the weak solutions provided by Prop.~\ref{prop:ci-nonuniqueness} are not admissible because the initial term in \eqref{eq:ci-sol-energy} does not match the initial energy. To overcome this problem, Prop.~\ref{prop:ci-data} below comes into play. Indeed, Prop.~\ref{prop:ci-data} allows to modify a given subsolution such that \eqref{eq:0001} holds for some time $T_0\in (0,T)$ which can then be used as the ``new initial time'', cf.~Rem.~\ref{rem:ci-data} below. 
\end{rem} 

\begin{proof}[Proof of Prop.~\ref{prop:ci-nonuniqueness}]
Prop.~\ref{prop:ci-nonuniqueness} follows immediately from \cite[Thm.~2.15]{BouMarTit26pre} as soon as we have shown that the differential inclusion \eqref{eq:eulerlin-mass}-\eqref{eq:K} satisfies the four ``structural assumptions'' stated in \cite{BouMarTit26pre}. Three of those assumptions were shown by the second author in \cite{Markfelder24} for $n=2$, see Lemmas~3.2, 3.3 and 3.5 therein. Looking at the proof of \cite[Lemmas~3.2 and 3.5]{Markfelder24}, it turns out that those lemmas still hold for $n=3$. A 3-D version of \cite[Lemma~3.3]{Markfelder24} is also true but less obvious. For this reason, we present a detailed proof of a 3-D version of \cite[Lemma~3.3]{Markfelder24} in the appendix, see Lemma~\ref{lemma:suitable-operator-3D}. 

For the fourth assumption we must find a map $D\in C(\R^+\times \R^+ \times \phase;\R)$ with the following properties
\begin{enumerate}
	\item \label{item:suitable-D-concave} For all $(\rho,Q)\in \R^+\times \R^+$, the map $(\vm,\mU,q,\vF)\mapsto D\big((\rho,Q),(\vm,\mU,q,\vF)\big)$ is concave.
	
	\item \label{item:suitable-D-K=>0} For all $(\rho,Q)\in \R^+\times \R^+$ and all $(\vm,\mU,q,\vF)\in \sK_{\rho,Q}$ there holds $D\big((\rho,Q),(\vm,\mU,q,\vF)\big)=0$.
	
	\item \label{item:suitable-D-0=>K} If $D\big((\rho,Q),(\vm,\mU,q,\vF)\big)=0$ and $(\vm,\mU,q,\vF)\in (\sK_{\rho,Q})^\co$, then $(\vm,\mU,q,\vF)\in \sK_{\rho,Q}$.
\end{enumerate}
We claim that 
$$
	D\big((\rho,Q),(\vm,\mU,q,\vF)\big):= n(q-p(\rho)) - \frac{|\vm|^2}{\rho}
$$
has all the required properties. Indeed, property \ref{item:suitable-D-concave} is obvious, while property \ref{item:suitable-D-K=>0} follows from \eqref{eq:trace-id-Uq}. So it remains to show property \ref{item:suitable-D-0=>K}.

If $(\vm,\mU,q,\vF)\in (\sK_{\rho,Q})^\co$, then $q\leq Q$ and 
$$
	\frac{\vm\otimes\vm}{\rho} + p(\rho)\id - \mU - q\id \leq 0 ,
$$
facts which have been proven in detail in \cite[Proof of Lemma~3.2]{Markfelder24}. Hence the matrix $\frac{\vm\otimes\vm}{\rho} + p(\rho)\id - \mU - q\id$ is negative semi-definite and, since $D\big((\rho,Q),(\vm,\mU,q,\vF)\big)=0$, its trace vanishes. This implies that the matrix is already equal to zero, i.e.~\eqref{eq:id-Uq} holds. Next, we deduce from Caratheodory's theorem (see e.g.~\cite[Prop.~B.2]{Markfelder24}) that there exist $N\in \N$ and $\big(\tau_i,(\vm_i,\mU_i,q_i, \vF_i)\big)\in \R^+\times \sK_{\rho,Q}$ (for $i\in \{1,...,N\}$) such that 
$$
	\sum_{i=1}^N \tau_i (\vm_i,\mU_i,q_i, \vF_i) = (\vm,\mU,q,\vF), \qquad \text{ and }\qquad \sum_{i=1}^N \tau_i  = 1.
$$
Hence, we have 
\begin{align}
	\left| \sum_{i=1}^N \tau_i \vm_i\right|^2 &= |\vm|^2 = \rho n (q - p(\rho)) = \rho n \left( \sum_{i=1}^N \tau_i q_i - p(\rho)\right) \notag\\
	&= \sum_{i=1}^N \tau_i \rho n (q_i - p(\rho)) =  \sum_{i=1}^N \tau_i |\vm_i|^2, \label{eq:0002}
\end{align}
where we have used that $(\vm,\mU,q,\vF)$ and all $(\vm_i,\mU_i,q_i, \vF_i)$ (for $i\in \{1,...,N\}$) satisfy \eqref{eq:trace-id-Uq}. From \eqref{eq:0002} and the strict convexity of the map $\vm\mapsto |\vm|^2$, we infer that $\vm_i=\vm$ for all $i\in \{1,...,N\}$. Consequently, there holds
$$
	\vF = \sum_{i=1}^N \tau_i \vF_i = \sum_{i=1}^N \tau_i \frac{\frac{n}{2} \big(q_i - p(\rho)\big) + P(\rho) + p(\rho)}{\rho} \vm_i = \frac{\frac{n}{2} \big(q - p(\rho)\big) + P(\rho) + p(\rho)}{\rho} \vm,
$$ 
i.e.~\eqref{eq:id-F} for all $(\vm_i,\mU_i,q_i, \vF_i)$ (for $i\in \{1,...,N\}$) implies \eqref{eq:id-F} for $(\vm,\mU,q,\vF)$. All in all, we have $(\vm,\mU,q,\vF)\in \sK_{\rho,Q}$ and thus property \ref{item:suitable-D-0=>K} holds as desired.

Now we may apply \cite[Thm.~2.15]{BouMarTit26pre} to the differential inclusion \eqref{eq:eulerlin-mass}-\eqref{eq:K} which finishes the proof. 
\end{proof}

\subsection{Modification of the subsolution and the initial data} \label{subsec:ci-data}

The following proposition results from applying \cite[Thm.~2.29]{BouMarTit26pre} to the differential inclusion \eqref{eq:eulerlin-mass}-\eqref{eq:K}. 

\begin{prop} \label{prop:ci-data}
	Let $T>0$, $n\in \{2,3\}$. Assume there exist $\ov{\rho}\in C^1\big([0,T]\times\T^n;\R^+\big)$, $Q\in C^1\big([0,T]\times \T^n;\R^+\big)$, and 
	$$
		(\ov{\vm},\ov{\mU},\ov{q},\ov{\vF})\in \Cweak\big([0,T];L^2(\T^n;\phase)\big) \cap C^1\big((0,T)\times \T^n;\phase\big) 
	$$ 
	satisfying the assumptions of Prop.~\ref{prop:ci-nonuniqueness}, i.e.~\eqref{eq:eulerlin-mass}-\eqref{eq:eulerlin-energy} as well as \eqref{eq:ci-subs} hold for all $(t,\vx)\in (0,T)\times \T^n$. Let furthermore $T_0\in (0,T)$ and $\beta,\ep>0$. 
	
	Then there exists
	$$
		(\widetilde{\vm},\widetilde{\mU},\widetilde{q},\widetilde{\vF}) \in \Cweak\big([0,T];L^2(\T^n;\phase)\big) \cap C^1\big(((0,T)\setminus\{T_0\})\times \T^n;\phase\big)
	$$ 
	with the following properties: 
	\begin{enumerate} 
		\item \label{item:idiK-pde} The tuple $(\ov{\rho}, \widetilde{\vm},\widetilde{\mU},\widetilde{q},\widetilde{\vF})$ solves the partial differential equations and inequalities \eqref{eq:eulerlin-mass}-\eqref{eq:eulerlin-energy} pointwise for all $t\in (0,T)\setminus\{T_0\}$ and all $\vx \in \T^n$.
		
		\item \label{item:idiK-valuesinUK} The tuple $(\widetilde{\vm},\widetilde{\mU},\widetilde{q},\widetilde{\vF})$ takes values in $\sU_{\ov{\rho}(t,\vx),Q(t,\vx)}$ if $t\neq T_0$ and in $\sK_{\ov{\rho}(t,\vx),Q(t,\vx)}$ for $t=T_0$, i.e. 
		\begin{align*}
			(\widetilde{\vm},\widetilde{\mU},\widetilde{q},\widetilde{\vF})(t,\vx) &\in \sU_{\ov{\rho}(t,\vx),Q(t,\vx)}\qquad \text{ for all }t\in (0,T)\setminus\{T_0\} \text{ and all }\vx\in \T^n , \\
			(\widetilde{\vm},\widetilde{\mU},\widetilde{q},\widetilde{\vF})(T_0,\vx) &\in \sK_{\ov{\rho}(T_0,\vx),Q(T_0,\vx)}\qquad \text{ for a.e. }\vx\in \T^n . 
		\end{align*} 
		
		\item \label{item:idiK-outside} Outside the temporal interval $(T_0-\beta,T_0+\beta)$, the tuple $(\widetilde{\vm},\widetilde{\mU},\widetilde{q},\widetilde{\vF})$ coincides with $(\ov{\vm},\ov{\mU},\ov{q},\ov{\vF})$, i.e. 
		\begin{equation*} 
		\begin{split}
			(\widetilde{\vm},\widetilde{\mU},\widetilde{q},\widetilde{\vF})(t,\vx) &= (\ov{\vm},\ov{\mU},\ov{q},\ov{\vF})(t,\vx) \\
			&\qquad \text{ for all }t\in (0,T)\setminus(T_0-\beta,T_0+\beta) \text{ and all }\vx\in \T^n.
		\end{split} 
		\end{equation*} 
		
		\item \label{item:idiK-m-close} The $L^2$ distance between $\widetilde{\vm}(T_0,\cdot)$ and $\ov{\vm}(T_0,\cdot)$ can be estimated as follows
		\begin{equation*} 
			\Big\|\widetilde{\vm}(T_0,\cdot) - \ov{\vm}(T_0,\cdot)\Big\|_{L^2(\T^n)}^2 \leq \big\|\widetilde{\vm}(T_0,\cdot)\big\|_{L^2(\T^n)}^2 - \big\|\ov{\vm}(T_0,\cdot)\big\|_{L^2(\T^n)}^2 + \ep.
		\end{equation*} 
	\end{enumerate}
\end{prop}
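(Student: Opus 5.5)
Prop.~\ref{prop:ci-data} will be obtained, like Prop.~\ref{prop:ci-nonuniqueness}, as a direct application of the abstract result \cite[Thm.~2.29]{BouMarTit26pre} to the differential inclusion \eqref{eq:eulerlin-mass}-\eqref{eq:K}. The plan is therefore to check the hypotheses of that theorem, not to build the modified subsolution by hand.

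The first step is to recall that \cite[Thm.~2.29]{BouMarTit26pre} uses the same four structural assumptions as \cite[Thm.~2.15]{BouMarTit26pre}, possibly together with some extra conditions. The four structural assumptions have already been checked in the proof of Prop.~\ref{prop:ci-nonuniqueness}:
\begin{itemize}
\item the three assumptions covered by \cite[Lemmas~3.2, 3.3, 3.5]{Markfelder24}, together with the 3-D version of Lemma~3.3 in Lemma~\ref{lemma:suitable-operator-3D};
\item the concave functional $D\big((\rho,Q),(\vm,\mU,q,\vF)\big)=n(q-p(\rho))-\frac{|\vm|^2}{\rho}$ with properties \ref{item:suitable-D-concave}--\ref{item:suitable-D-0=>K}.
\end{itemize}
The hypotheses on the subsolution $(\ov\rho,\ov\vm,\ov\mU,\ov q,\ov\vF)$ in Prop.~\ref{prop:ci-data} are exactly those of Prop.~\ref{prop:ci-nonuniqueness}, so the tuple is admissible input for the abstract theorem.

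The second step is to translate the conclusions of \cite[Thm.~2.29]{BouMarTit26pre} into items \ref{item:idiK-pde}--\ref{item:idiK-m-close}.
\begin{itemize}
\item Items \ref{item:idiK-pde} and \ref{item:idiK-outside} should follow because the modification is made of localized oscillatory perturbations supported in $(T_0-\beta,T_0+\beta)$ that solve the linear part of the system.
\item Item \ref{item:idiK-valuesinUK} should come from the abstract statement that the limit at $T_0$ lies in the constitutive set. That statement is usually proved by driving $D\to 0$ at $t=T_0$ and then using property \ref{item:suitable-D-0=>K} of $D$ together with $(\sK)^\co$-membership.
\item Item \ref{item:idiK-m-close} is the abstract ``energy-gain'' estimate. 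For our $D$, the quadratic part is $\frac{|\vm|^2}{\rho}$, and the general estimate controls the gained perturbation by the increase of the quadratic quantity. So item \ref{item:idiK-m-close} should follow from the abstract version once the weighting by $\ov\rho$ is handled.
\end{itemize}

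The main obstacle is this weighting. The abstract estimate is naturally formulated in terms of $\int \frac{|\vm|^2}{\rho}$, that is, the weighted norm $\|\cdot/\sqrt{\rho}\|_{L^2}$, whereas item \ref{item:idiK-m-close} is stated with the plain $L^2$ norm. My first approach is to observe that the perturbation $\widetilde\vm-\ov\vm$ is built from weakly vanishing oscillations. Then
\[
\|\widetilde\vm\|_{L^2}^2=\|\ov\vm\|_{L^2}^2+\|\widetilde\vm-\ov\vm\|_{L^2}^2+2\int\ov\vm\cdot(\widetilde\vm-\ov\vm),
\]
and the cross term can be made smaller than $\ep$ by choosing sufficiently high frequencies in the final step of the construction. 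This yields item \ref{item:idiK-m-close} directly, for any positive weight and in particular with weight $1$.

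A further technical point is that $Q$ and $\ov\rho$ now depend on $(t,\vx)$, unlike in \cite{Markfelder24}. I would check that the abstract framework of \cite{BouMarTit26pre} allows $C^1$-dependence of the constitutive sets on these parameters. It should, since it is formulated for parameter families; continuity of $(\rho,Q)\mapsto\sK_{\rho,Q}$ in the Hausdorff sense is the relevant property and is evident from \eqref{eq:K}.
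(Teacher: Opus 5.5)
Your proposal is correct and matches the paper's proof. Items (a)--(c) come directly from \cite[Thm.~2.29]{BouMarTit26pre} via the structural assumptions already verified, and item (d) follows by expanding $\|\widetilde{\vm}(T_0,\cdot)-\ov{\vm}(T_0,\cdot)\|_{L^2}^2$ and making the cross term $\int_{\T^n}(\widetilde{\vm}-\ov{\vm})(T_0,\cdot)\cdot\ov{\vm}(T_0,\cdot)\dx$ small; no weighted energy-gain estimate is needed. One refinement: the value at $T_0$ is a limit of infinitely many perturbations, so there is no single ``final step''. The paper therefore phrases the weak smallness as the $\Cweak\big([0,T];L^2\big)$-distance to $(\ov{\vm},\ov{\mU},\ov{q},\ov{\vF})$ being arbitrarily small. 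This gives a sequence with $\widetilde{\vm}_i(T_0,\cdot)\rightharpoonup\ov{\vm}(T_0,\cdot)$, from which one picks a member whose cross term is at most $\ep/2$.
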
 

\begin{rem} \label{rem:ci-data} 
	As mentioned in Rem.~\ref{rem:ci-nonuniqueness} above, one needs to find a strict subsolution $(\ov{\vm},\ov{\mU},\ov{q},\ov{\vF})$ satisfying \eqref{eq:0001} in order to obtain infinitely many \emph{admissible} weak solutions of the Euler equations \eqref{eq:euler-mass}, \eqref{eq:euler-mom} from Prop.~\ref{prop:ci-nonuniqueness}. Having found a subsolution $(\ov{\vm},\ov{\mU},\ov{q},\ov{\vF})$ which does not necessarily fulfill \eqref{eq:0001}, Prop.~\ref{prop:ci-data} yields another subsolution $(\widetilde{\vm},\widetilde{\mU},\widetilde{q},\widetilde{\vF})$ which does satisfy \eqref{eq:0001} at time $t=T_0$, see item~\ref{item:idiK-valuesinUK} in Prop.~\ref{prop:ci-data}. So we may use $T_0$ as the ``new initial time'' and $(\ov{\rho},\widetilde{\vm})(T_0,\cdot)$ as the ``new initial data'' for which Prop.~\ref{prop:ci-nonuniqueness} yields infinitely many admissible weak solutions. Notice furthermore that in the proof of Thm.~\ref{thm:density}, item~\ref{item:idiK-m-close} of Prop.~\ref{prop:ci-data} will allow to estimate the distance between the ``old'' initial momentum $\ov{\vm}(0,\cdot)$ and the ``new'' initial momentum $\widetilde{\vm}(T_0,\cdot)$, cf.~Sect.~\ref{subsubsec:moddata-s6} below. 
\end{rem}

\begin{proof}[Proof of Prop.~\ref{prop:ci-data}] 
The ``structural assumptions'' shown in the proof of Prop.~\ref{prop:ci-nonuniqueness} allow to apply \cite[Thm.~2.29]{BouMarTit26pre} to the differential inclusion \eqref{eq:eulerlin-mass}-\eqref{eq:K}. This immediately yields a tuple 
$$
	(\widetilde{\vm},\widetilde{\mU},\widetilde{q},\widetilde{\vF}) \in \Cweak\big([0,T];L^2(\T^n;\phase)\big) \cap C^1\big(((0,T)\setminus\{T_0\})\times \T^n;\phase\big)
$$ 
which satisfies \ref{item:idiK-pde}-\ref{item:idiK-outside}. Looking into the proof of \cite[Thm.~2.29]{BouMarTit26pre}, we observe that it is possible to achieve that 
$$
	d\big( (\ov{\vm},\ov{\mU},\ov{q},\ov{\vF}) , (\widetilde{\vm},\widetilde{\mU},\widetilde{q},\widetilde{\vF})\big) 
$$
is arbitrarily small, where $d(\cdot,\cdot)$ denotes the metric which induces the topology on the space $\Cweak\big([0,T];L^2(\T^n;\phase)\big)$, see \cite[Prop.~2.18]{BouMarTit26pre}. This allows to generate a sequence of 
$$
	(\widetilde{\vm}_i,\widetilde{\mU}_i,\widetilde{q}_i,\widetilde{\vF}_i)_{i\in \N} \subset \Cweak\big([0,T];L^2(\T^n;\phase)\big) \cap C^1\big(((0,T)\setminus\{T_0\})\times \T^n;\phase\big)
$$ 
with properties \ref{item:idiK-pde}-\ref{item:idiK-outside} and such that $\widetilde{\vm}_i(T_0,\cdot) \rightharpoonup \ov{\vm}(T_0,\cdot)$ in $L^2(\T^n)$ as $i\to \infty$. Thus, we find $j\in\N$ such that 
$$
	\left| \int_{\T^n} \widetilde{\vm}_j(T_0,\cdot) \cdot \ov{\vm}(T_0,\cdot) \dx - \big\|\ov{\vm}(T_0,\cdot)\big\|_{L^2(\T^n)}^2 \right| = \left| \int_{\T^n} \big(\widetilde{\vm}_j(T_0,\cdot) - \ov{\vm}(T_0,\cdot)\big) \cdot \ov{\vm}(T_0,\cdot) \dx \right| \leq \half\ep.
$$ 
Consequently, $(\widetilde{\vm},\widetilde{\mU},\widetilde{q},\widetilde{\vF}):= (\widetilde{\vm}_j,\widetilde{\mU}_j,\widetilde{q}_j,\widetilde{\vF}_j)$ has the additional property \ref{item:idiK-m-close} because
\begin{align*}
	&\Big\|\widetilde{\vm}(T_0,\cdot) - \ov{\vm}(T_0,\cdot)\Big\|_{L^2(\T^n)}^2 \\
	&= \big\|\widetilde{\vm}(T_0,\cdot)\big\|_{L^2(\T^n)}^2 + \big\|\ov{\vm}(T_0,\cdot)\big\|_{L^2(\T^n)}^2 - 2 \int_{\T^n} \widetilde{\vm}_j(T_0,\cdot) \cdot \ov{\vm}(T_0,\cdot) \dx \\
	&\leq \big\|\widetilde{\vm}(T_0,\cdot)\big\|_{L^2(\T^n)}^2 - \big\|\ov{\vm}(T_0,\cdot)\big\|_{L^2(\T^n)}^2 + \ep.
\end{align*}
\end{proof}

\subsection{Properties of $\sU_{\rho,Q}$} \label{subsec:ci-U}

In order to apply Prop.~\ref{prop:ci-nonuniqueness} or Prop.~\ref{prop:ci-data}, we need to find a subsolution, i.e.~a tuple $(\ov{\rho},\ov{\vm},\ov{\mU},\ov{q},\ov{\vF})$ which in particular satisfies \eqref{eq:ci-subs} for all $(t,\vx)\in (0,T)\times \T^n$. To this end, we need a criterion which allows us to decide whether or not a given tuple $(\vm,\mU,q,\vF)\in \phase$ lies in $\sU_{\rho,Q}$. Such a criterion is provided in Prop.~\ref{prop:U} below, which follows for $n=2$ from the second author's more detailed characterisation of $\sU_{\rho,Q}$ given in \cite[Sect.~3.4]{Markfelder24}. For $n=3$ we will provide an analogous characterisation in Appendix~\ref{sec:app-U}. 

Recall that 
$$
	\sK_{\rho,Q}:= \left\{ (\vm,\mU,q,\vF)\in \phase \,\Big| \, \text{\eqref{eq:cond-K-1}-\eqref{eq:cond-K-3} hold}\right\}, 
$$
where \eqref{eq:cond-K-1}-\eqref{eq:cond-K-3} read
\begin{align}
	\frac{\vm\otimes \vm}{\rho} + p(\rho)\id - \mU - q\id &= 0, \label{eq:cond-K-1} \\
	\vF - \frac{\frac{n}{2} \big(q - p(\rho)\big) + P(\rho) + p(\rho)}{\rho} \vm &= 0 , \label{eq:cond-K-2} \\
	q &\leq Q. \label{eq:cond-K-3}
\end{align}
Moreover, $\sU_{\rho,Q}= \interior{\big((\sK_{\rho,Q})^\co\big)}$. 

An explicit description of $\sU_{\rho,Q}$ should contain relaxations of the conditions \eqref{eq:cond-K-1}-\eqref{eq:cond-K-3}. By convexity, linearity and continuity, it is plausible that the sought relaxations of \eqref{eq:cond-K-1} and \eqref{eq:cond-K-3} read
$$
	\frac{\vm\otimes\vm}{\rho} + p(\rho)\id - \mU - q\id < 0 \qquad \text{ and } \qquad q<Q ,
$$
respectively, cf.~\cite[Lemma~4.3.6]{Markfelder} and references therein. As the left-hand side of \eqref{eq:cond-K-2} is neither convex nor concave in $(\vm,q)$, it is not at all obvious how \eqref{eq:cond-K-2} is relaxed. However, it turns out that it is not necessary to know an explicit description of the relaxation of \eqref{eq:cond-K-2} as long as we are allowed to play with the bound $Q$. In fact, one can even get rid of any condition on $\vF$ as shown by the following proposition.

\begin{prop} \label{prop:U} 
	Let $n\in \{2,3\}$, and $\sC \subset \R^+ \times \R^n \times \symz{n} \times \R$ be compact with the property that 
	\begin{equation} \label{eq:matrix-neg-def}
		\frac{\vm\otimes\vm}{\rho} + p(\rho)\id - \mU - q\id < 0 \qquad \text{ for any } (\rho,\vm,\mU,q) \in \sC. 
	\end{equation}
	Then the following assertions hold.
	\begin{enumerate}
		\item \label{item:U-1} For any $\sF\subset \R^n$ compact, there exists a constant $Q>0$ (depending on $\sC$ and $\sF$) such that
		$$
			(\vm,\mU,q,\vF) \in \sU_{\rho,Q} \qquad \text{ for all }\big((\rho,\vm,\mU,q),\vF\big) \in \sC\times \sF.
		$$
		
		\item \label{item:U-2} For any $\ep>0$, there exists $\delta>0$ (depending on $\ep$ and $\sC$) such that 
		$$
			(\vm,\mU,q,\vF) \in \sU_{\rho,q+\ep} \qquad \text{ for all }(\rho,\vm,\mU,q,\vF) \in \sB_\delta,
		$$ 
		where 
		$$
			\sB_\delta := \left\{ \big((\rho,\vm,\mU,q),\vF\big) \in \sC\times \R^n \, \Big|\,  \left| \vF -  \frac{\frac{n}{2} \big(q - p(\rho)\big) + P(\rho) + p(\rho)}{\rho} \vm \right| < \delta \right\}.
		$$
	\end{enumerate}
\end{prop}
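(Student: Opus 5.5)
We will prove both items by writing each point as an explicit, uniformly controlled convex combination of points in $\sK_{\rho,Q}$, and then using compactness to pass from membership in the hull to membership in its interior.

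\textbf{Two-point lemma.} Fix $(\rho,\vm,\mU,q)$ satisfying \eqref{eq:matrix-neg-def} and an arbitrary $\vF$. We first build finitely many points of $\sK_{\rho,Q}$ whose barycentre is $(\vm,\mU,q,\vF)$. The tool is the rank-one splitting used in the incompressible theory and in \cite[Sect.~3.4]{Markfelder24}.

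Consider the map $(\vm,\mU,q)\mapsto \frac{\vm\otimes\vm}{\rho}+p(\rho)\id-\mU-q\id$. Negative definiteness means the $(\vm,\mU,q)$-projection lies in the interior of the hull of the $(\vm,\mU,q)$-projection of $\sK_{\rho,Q}$, with $Q$ large.

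To handle $\vF$, split along a rank-one direction. Pick a vector $\vec a$ and set $\vm_\pm=\vm\pm s\vec a$. Choose $\mU_\pm,q_\pm$ so that the mixture of $\frac{\vm_\pm\otimes\vm_\pm}{\rho}$ produces the correct average. The value of $\vF_\pm$ is then forced by \eqref{eq:cond-K-2}. Averaging gives
$$
\tfrac12(\vF_++\vF_-)=\frac{\tfrac n2(q-p)+P+p}{\rho}\vm+\frac{n}{2\rho}\,\tfrac12\big((q_+-q)\,s\vec a-(q_--q)\,s\vec a\big).
$$
Because $q_\pm$ may differ (asymmetric weights $\lambda$, $1-\lambda$ are allowed), the deviation of $\vF$ from its constitutive value can be steered in the direction of $\vec a$. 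Doing this for $n$ independent directions $\vec a$, and nesting the splittings, reaches any prescribed $\vF$.

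This construction only requires the matrix slack together with the freedom to enlarge $q_\pm$. The bounded $\vF$-deviation is paid for by $|q_\pm|$, which stays bounded by a constant depending on the deviation size and on the slack $\lambda_{\min}$ of \eqref{eq:matrix-neg-def}. For the first splitting we need $(\vm,\mU,q)$ with some strictly negative slack. We then complete the $(\vm,\mU,q)$-part by the standard incompressible-type decomposition inside the open set $\{\cdot<0\}$.

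\textbf{Item (a).} On $\sC\times\sF$ the slack is bounded below by compactness, and the $\vF$-deviation is bounded. The lemma therefore gives all $q_i\le Q_0$ for a uniform $Q_0$. Openness also holds: a small ball around each point satisfies the same bounds, since its points are again combinations of points in $\sK_{\rho,Q_0}$. Hence taking $Q=Q_0+1$ places everything in the interior $\sU_{\rho,Q}$.

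\textbf{Item (b).} Here the required $q$-excess must be small, at most $\ep$. If the $\vF$-deviation is below $\delta$, the $q$-perturbations needed for the $\vF$-steering are $O(\delta/s)$. The $(\vm,\mU,q)$-part is completed with $q_i$ close to $q$ by using small amplitudes. This is where the main obstacle lies: we must show that $\max_i q_i - q\to0$ as $\delta\to0$, uniformly on $\sC$.

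The difficulty is that averaging $\frac{\vm_i\otimes\vm_i}{\rho}$ raises the trace by roughly $\sum\tau_i|\vm_i-\vm|^2/\rho$. This increase must be absorbed by the slack, and the slack may exceed $\ep$. So the trace slack has to go into $\mU$-oscillation, not into $q$.

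The plan is to first decompose the matrix slack into traceless rank-one pieces plus a small trace part, using $n\ge2$. The $\vF$-steering then uses amplitude $s$ of order one, with weight asymmetry of order $\delta$. This yields $q_i\le q+\ep$. Finally, uniformity on the compact set $\sC$, together with shrinking $\delta$ to get an open neighbourhood, gives $\delta(\ep,\sC)$.
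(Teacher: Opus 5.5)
The mechanism you propose is the one behind the paper's inner set $\sW_{\rho,Q}\subset\sU_{\rho,Q}$ (Lemmas~\ref{lemma:WsubsetQ-step1}--\ref{lemma:WsubsetQ-step2}): split along a line so that the two endpoints carry constitutive $\vF$ and a $q$-increment correlated with the $\vm$-increment, then decompose each endpoint at fixed $q$. However, the proposal never proves the step both items rest on. You do not specify $\mU_\pm,q_\pm$, and you never check that the endpoints still satisfy $\frac{\vm_\pm\otimes\vm_\pm}{\rho}+p(\rho)\id-\mU_\pm-q_\pm\id\le 0$. So the bounds ``$q_i\le Q_0$'' in (a) and ``$q_i\le q+\ep$'' in (b) are asserted, not derived.

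Concretely, let $S<0$ be the slack and $|\vec a|=1$, and move along $t\mapsto(\vm+t\vec a,\mU+t\Delta\mU,q+t\eta)$. Choose the traceless $\Delta\mU$ so that the matrix only moves in the direction $\vec a\otimes\vec a$. This gives
\[
S+\Big(\frac{t^2}{\rho}-\beta t\Big)\vec a\otimes\vec a,\qquad \beta=n\eta-\frac{2\,\vm\cdot\vec a}{\rho},
\]
which is $\le 0$ exactly as long as $\frac{t^2}{\rho}-\beta t\le A:=-1/(\vec a^\top S^{-1}\vec a)$ (cf.~Lemma~\ref{lemma:definiteness-remains-for-sum}). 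Take $t_-<0<t_+$ to be the two roots, with weights making $t=0$ the barycentre. The resulting $\vF$-deviation is $\frac{n\eta}{2\rho}|t_+t_-|\,\vec a=\frac n2\,\eta A\,\vec a$, paid for by raising $q$ to $q+t_+\eta$ at one endpoint. For large $\eta$ this cost grows like $\eta^2$, so the reachable deviation grows like $\sqrt{Q-q}$, which gives (a). This trade-off is exactly what the paper's vectors $\vf^j_{\rho,Q}=\frac{A^j}{r^j}\vsigma^j$ and the bound $|\vf^j_{\rho,Q}|\ge\big(c_1/(Q-q)+c_2/\sqrt{Q-q}\big)^{-1}$ encode, and it is absent from your argument.

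For (b) you explicitly leave the ``main obstacle'' open, and your proposed way around it is misdirected. The trace slack is no obstacle. Once an endpoint has constitutive $\vF$, matrix $\le 0$ and $q_\pm\le Q$, the fixed-$q$ decomposition (\cite[Lemma~4.3.6]{Markfelder}, as in Lemma~\ref{lemma:WsubsetQ-step1}) puts all $\vm_i$ on the sphere $|\vm_i|^2=n\rho\big(q_\pm-p(\rho)\big)$. Hence $q_i=q_\pm$ exactly, however large the slack. Moreover, a decomposition of the slack into ``traceless rank-one pieces plus a small trace part'' does not exist: $\vec a\otimes\vec a$ has trace $|\vec a|^2$, and $\tr S$ is fixed by the point.

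What closes (b) is the computation above. On $\sC$ the quantities $\rho,\rho^{-1},|\vm|,A,A^{-1}$ are bounded, so for $|\eta|\le 1$ both roots satisfy $|t_\pm|\le T^\ast(\sC)$. Take $\eta=\pm\ep/T^\ast$ (for $\ep\le T^\ast$; larger $\ep$ follows from monotonicity of $\sU_{\rho,Q}$ in $Q$). This keeps both endpoint values of $q$ at most $q+\ep$ and yields deviations of length at least $\frac n2 A_{\min}\ep/T^\ast$ in directions $\pm\vec a$. Do this separately for $\vec a=e_1,\dots,e_n$ from the same $(\vm,\mU,q)$ and mix only in the $\vF$-slot; this covers a ball of radius $\delta(\ep,\sC)>0$. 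No nesting is needed, and nesting at boundary endpoints would in fact fail, because $S+A\vec a\otimes\vec a$ is singular. Openness then follows from continuity, as in the paper.
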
 

\begin{rem} 
	Prop.~\ref{prop:U}~\ref{item:U-1} shows that the condition \eqref{eq:matrix-neg-def} is already sufficient for a point $(\vm,\mU,q,\vF)$ to lie in $\sU_{\rho,Q}$ as long as the bound $Q$ is chosen very large. In part~\ref{item:U-2} the statement is tightened with regard to the size of the bound $Q$: if condition \eqref{eq:cond-K-2} is only mildly violated, then a small bound $Q=q+\ep$ is sufficient.
\end{rem}

\begin{proof}[Proof of Prop.~\ref{prop:U}]
	Let us first consider the case $n=2$. We recall the following notation from \cite{Markfelder24}. We introduce the four vectors $\vsigma^j\in \R^2$:
	\begin{align*}
		\vsigma^1 &:= \left(\begin{array}{r} 1 \\ 1 \end{array} \right), & \vsigma^2 &:= \left(\begin{array}{r} -1 \\ -1 \end{array} \right), & \vsigma^3 &:= \left(\begin{array}{r} 1 \\ -1 \end{array} \right), & \vsigma^4 &:= \left(\begin{array}{r} -1 \\ 1 \end{array} \right). 
	\end{align*}
	Moreover, for $j=1,2,3,4$, $(\rho,\vm,\mU,q)\in \sC$ and $Q>q$, we define
	$$
		A_{\rho,Q}^j(\vm,\mU,q) := \frac{\det\left(\frac{\vm\otimes \vm}{\rho} - \mU + (p(\rho) - q)\id \right)}{- ([\vsigma^j]_2 , -[\vsigma^j]_1) \cdot \left(\frac{\vm\otimes \vm}{\rho} - \mU + (p(\rho) - q)\id \right)\cdot \left( \begin{array}{r} [\vsigma^j]_2 \\ -[\vsigma^j]_1 \end{array}\right)},
	$$
	as well as 
	$$
		r_{\rho,Q}^j(\vm,\mU,q) := \frac{1}{2 (Q-q)} \left( - \vm \cdot \vsigma^j + \sqrt{(\vm \cdot \vsigma^j)^2 + 4\rho A_{\rho,Q}^j(\vm,\mU,q) + 4\rho (Q-q)} \right) .
	$$
	As explained in detail in \cite[Sect.~3.4.1]{Markfelder24}, there holds 
	$$
		A_{\rho,Q}^j(\vm,\mU,q) > 0 \quad \text{ and } \quad r_{\rho,Q}^j(\vm,\mU,q) > 0 \qquad \text{ for all } (\rho,\vm,\mU,q)\in \sC, \text{ and all } Q>q. 
	$$
	Hence, we may set 
	$$
		\vf_{\rho,Q}^j(\vm,\mU,q) := \frac{A_{\rho,Q}^j(\vm,\mU,q)}{r_{\rho,Q}^j(\vm,\mU,q)} \vsigma^j, 
	$$
	again for $j=1,2,3,4$, $(\rho,\vm,\mU,q)\in \sC$ and $Q>q$. Finally, the sets $\sW_{\rho,Q}$ are defined as
	\begin{align} 
		\sW_{\rho,Q} := \Bigg\{ (\vm,\mU,q,\vF) \in \phase\,\Big|\, & \bullet \ \frac{\vm\otimes\vm}{\rho} + p(\rho)\id - \mU -q \id< 0, \label{eq:WrhoQ-2d} \\
		& \bullet \ \exists\,\kappa_1,\kappa_2,\kappa_3,\kappa_4\in \R^+\text{ with } \ \sum_{j=1}^4\kappa_j =1\ \text{ such that } \notag \\
		&\quad\qquad \vF-\frac{q+P(\rho)}{\rho}\vm = \sum_{j=1}^4\kappa_j \vf^j_{\rho,Q}(\vm,\mU,q), \notag \\
		& \bullet \ q< Q \quad \Bigg\} . \notag 
	\end{align}
	From \cite[Prop.~3.9]{Markfelder24} we know that $\sW_{\rho,Q}\subset \sU_{\rho,Q}$. 
	
	The proof of Prop.~\ref{prop:U} is now straightforward. Indeed by compactness of $\sC$, we infer that there exist constants $c_1(\sC),c_2(\sC)>0$ such that 
	\begin{equation} \label{eq:est-fj}
		\Big| \vf_{\rho,Q}^j(\vm,\mU,q) \Big| \geq \frac{1}{\frac{c_1(\sC)}{Q-q} + \frac{c_2(\sC)}{\sqrt{Q-q}}} \qquad \text{ for all } (\rho,\vm,\mU,q)\in \sC, \text{ and all } Q>q. 
	\end{equation}
	Hence, in order to show item \ref{item:U-1} of Prop.~\ref{prop:U}, we may take $Q$ large enough, so that 
	$$
		\vF-\frac{q+P(\rho)}{\rho}\vm
	$$
	is contained in the convex polytope spanned by the four points $\vf_{\rho,Q}^j(\vm,\mU,q)$ ($j=1,2,3,4$) for any $\big((\rho,\vm,\mU,q),\vF\big) \in \sC\times \sF$ as both $\sC$ and $\sF$ are compact. Thus, $(\vm,\mU,q,\vF)\in \sW_{\rho,Q}\subset \sU_{\rho,Q}$ for any $\big((\rho,\vm,\mU,q),\vF\big) \in \sC\times \sF$ as desired.
	
	Regarding item \ref{item:U-2} of Prop.~\ref{prop:U}, we observe that estimate \eqref{eq:est-fj} yields 
	$$
		\Big| \vf_{\rho,q+\ep}^j(\vm,\mU,q) \Big| \geq \frac{1}{\frac{c_1(\sC)}{\ep} + \frac{c_2(\sC)}{\sqrt{\ep}}} \qquad \text{ for all } (\rho,\vm,\mU,q)\in \sC, \text{ and all } \ep>0. 
	$$
	Choosing 
	$$
		\delta:=\frac{1}{\frac{c_1(\sC)\sqrt{2}}{\ep} + \frac{c_2(\sC)\sqrt{2}}{\sqrt{\ep}}},
	$$
	we find that 
	$$
		\vF-\frac{q+P(\rho)}{\rho}\vm
	$$
	is contained in the convex polytope spanned by the four points $\vf_{\rho,q+\ep}^j(\vm,\mU,q)$ ($j=1,2,3,4$) for any $\big((\rho,\vm,\mU,q),\vF\big) \in \sB_\delta$. Consequently, $(\vm,\mU,q,\vF)\in \sW_{\rho,q+\ep}\subset \sU_{\rho,q+\ep}$ for any $(\rho,\vm,\mU,q,\vF) \in \sB_\delta$ as desired.
	
	In the case $n=3$, we may proceed in a similar fashion once we have proved a three-dimensional version of \cite[Prop.~3.9]{Markfelder24}. We will carry out the latter in the appendix, see Prop.~\ref{prop:WsubsetU-3D}. 
\end{proof}

\section{Building blocks for the subsolution} \label{sec:bb}

In view of Props.~\ref{prop:ci-nonuniqueness} and \ref{prop:ci-data}, we will need to construct a suitable subsolution in order to prove Thm.~\ref{thm:density}. This subsolution will be build from a so-called \emph{dissipative weak solution}, a notion which has been introduced by \name{Feireisl~et al.}, see e.g.~\cite[Sect.~5.2.1]{FLMS}. We recall its definition in Sect.~\ref{subsec:bb-dws} below. On a short time interval, the dissipative weak solutions which appear in the proof of Thm.~\ref{thm:density} will coincide with a strong solution, whose local-in-time existence we recall in Sect.~\ref{subsec:bb-strong} below.

\subsection{Strong solutions} \label{subsec:bb-strong}

Recall that strong solutions, i.e.~$(\rho,\vm)\in C^1\big([0,T]\times\T^n; \R^+ \times \R^n\big)$ which satisfy the equations \eqref{eq:euler-mass}, \eqref{eq:euler-mom} pointwise in the classical sense, exist for small $T$ and sufficiently regular initial data $(\rho_0,\vm_0)$:

\begin{prop} \label{prop:ex-strong}
	For any initial data $(\rho_0,\vm_0)\in H^s(\T^n;\R^+ \times \R^n)$ with $s> \frac{n}{2}+1$, there exists $T>0$ (depending on $\|(\rho_0,\vm_0)\|_{H^s(\T^n)}$) and a unique strong solution 
	$$
		(\rho,\vm)\in C^0\big([-T,T];H^s(\T^n;\R^+\times \R^n)\big) \cap C^1\big([-T,T];H^{s-1}(\T^n;\R^+\times \R^n)\big)
	$$
	of the initial value problem \eqref{eq:euler-mass}-\eqref{eq:initial}.
\end{prop}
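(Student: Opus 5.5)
This is the classical local well-posedness theory for symmetric hyperbolic systems, and the plan is to reduce to it rather than reprove it. Since the density is assumed positive (and $H^s\subset C^0$ for $s>n/2$, so $\rho_0\geq \underline{\rho}>0$ on $\T^n$), we pass from conservative variables $(\rho,\vm)$ to primitive variables $(\rho,\vu)$ with $\vu=\vm/\rho$. Because $H^s$ is an algebra for $s>n/2$ and $\rho_0^{-1}$ is a smooth function of $\rho_0$ on the range $[\underline\rho,\overline\rho]$, Moser-type composition estimates give $\vu_0\in H^s$. For smooth positive solutions the system reads
\[
\partial_t\rho+\vu\cdot\Grad\rho+\rho\Div\vu=0,\qquad \partial_t\vu+(\vu\cdot\Grad)\vu+\tfrac{p'(\rho)}{\rho}\Grad\rho=0 .
\]
To symmetrise it, we multiply the first equation by $p'(\rho)/\rho>0$, or equivalently use the sound-speed variable $c=\frac{2\sqrt{\kappa\gamma}}{\gamma-1}\rho^{(\gamma-1)/2}$. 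This turns the system into a symmetric hyperbolic system $A_0(U)\partial_tU+\sum_j A_j(U)\partial_jU=0$ with $A_0$ uniformly positive definite as long as $\rho$ stays in a compact subset of $(0,\infty)$.

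We then invoke the standard theorem of Kato/Majda (e.g.\ Majda's monograph or Benzoni-Gavage--Serre) for symmetric hyperbolic systems on $\T^n$. The existence part runs as follows. We take mollified data and iterate linear problems, proving uniform $H^s$ bounds. These come from commutator (Kato--Ponce) estimates $\|[\partial^\alpha,A_j(U)]\partial_jU\|_{L^2}\lesssim \|\Grad U\|_{L^\infty}\|U\|_{H^s}$, using $s>n/2+1$ so that $\Grad U\in L^\infty$. This yields an energy inequality $\frac{d}{dt}\|U\|_{H^s}^2\le C(\|U\|_{H^s})\|U\|_{H^s}^2$ and hence a time $T$ depending only on $\|U_0\|_{H^s}$ (and on the positive lower bound of $\rho_0$, which is implicit in the data). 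On this interval we also keep $\rho\ge\underline\rho/2$ via $\|\partial_t\rho\|_{L^\infty}\lesssim\|U\|_{H^s}^2$. Convergence holds in a lower norm by contraction in $L^2$. Uniqueness follows from the $L^2$ energy estimate for the difference of two solutions. Continuity in time with values in $H^s$, rather than only weak continuity, comes from the usual Bona--Smith or time-reversibility argument. Finally $\partial_tU\in C^0([-T,T];H^{s-1})$ is read off from the equation.

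The negative time interval $[-T,T]$ poses no extra difficulty. The Euler system is time-reversible under $(t,\vx,\rho,\vm)\mapsto(-t,\vx,\rho,-\vm)$, or one simply notes that the symmetric hyperbolic theory works backward as well. The statement for $(\rho,\vm)$ is recovered by $\vm=\rho\vu$, again using the algebra property of $H^s$. Since $s-1>n/2$, we have $H^{s}\subset C^1$ and $H^{s-1}\subset C^0$, so the solution is $C^1$ in $(t,\vx)$ and a strong solution in the classical sense. The only delicate point is the commutator energy estimate together with preserving positivity of $\rho$. Both are standard, so in the paper I would state the reduction and cite the literature rather than redo these estimates.
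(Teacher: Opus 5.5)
Your proposal is correct and takes the same route as the paper, which gives no proof of its own: it cites Kato and the textbooks of Majda and Benzoni-Gavage--Serre for the forward local theory, and obtains $[-T,T]$ by solving backward in time, which your time-reversal symmetry $(t,\vx,\rho,\vm)\mapsto(-t,\vx,\rho,-\vm)$ makes precise. One small slip: multiplying only the mass equation by $p'(\rho)/\rho$ does not symmetrise the system, since the off-diagonal entries of $A_j$ become $p'(\rho)$ and $p'(\rho)/\rho$, so you must also multiply the velocity equation by $\rho$ (taking $A_0=\diag\big(p'(\rho)/\rho,\rho,\dots,\rho\big)$), whereas your sound-speed alternative is correct as stated.
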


Note that by Sobolev embedding, the initial data as well as the solution are continuously differentiable, i.e.~$(\rho_0,\vm_0)\in C^1(\T^n;\R^+ \times \R^n)$ and $(\rho,\vm)\in C^1\big([-T,T]\times \T^n;\R^+\times \R^n\big)$. 

Prop.~\ref{prop:ex-strong} was originally proven by \name{Kato}~\cite{Kato75}. We also refer to standard textbooks, e.g.~\cite[Sect.~2.1 together with Sect.~1.3]{Majda} or \cite[Sect.~10.1 together with Sect.~13.2.3]{BenSer} for a detailed proof. 

Notice that usually in the literature the local-in-time existence result is only stated on the non-negative time interval $[0,T]$. Obviously, it is also possible to solve the equations backwards in time which yields solutions on $[-T,T]$ as stated in Prop.~\ref{prop:ex-strong}.

Let us finally note that strong solutions automatically satisfy the energy inequality \eqref{eq:euler-energy} with equality.

\subsection{Dissipative weak solutions} \label{subsec:bb-dws}

We first introduce some notation following \cite{FLMS}.

Let $\mathcal{Q}\subset \R^m$ compact. 
\begin{itemize}
	\item $\mathcal{M}(\mathcal{Q})$ denotes the set of all signed Radon measures. Note that $\mathcal{M}(\mathcal{Q})$ can be identified as the dual of $C(\mathcal{Q})$.
	
	\item For an $n$-dimensional vector space $E$, $\mathcal{M}(\mathcal{Q};E)$ denotes the set of vector valued measures, i.e. 
	$$
		\mathcal{M}(\mathcal{Q};E) := \left\{ (\nu_1,..., \nu_n) \, \Big|\, \nu_i \in \mathcal{M}(\mathcal{Q}) \text{ for any }i=1,...,n\right\}.
	$$
	
	\item $\mathcal{M}^+(\mathcal{Q})$ denotes the set of non-negative Radon measures. 
	
	\item $\mathcal{M}^+(\mathcal{Q};\sym{n})$ denotes the set of positive semidefinite matrix valued measures, i.e. 
	\begin{align*}
		\mathcal{M}^+(\mathcal{Q};\sym{n}) &= \bigg\{ \nu \in \mathcal{M}(\mathcal{Q};\sym{n})\,\Big|\, \\
		&\qquad\xi\otimes \xi:\int_{\mathcal{Q}} \phi \dnu \geq 0 \text{ for all }\xi\in \R^n \text{ and all }\phi\in C(\mathcal{Q})\text{ with } \phi\geq 0\bigg\}. 
	\end{align*}
\end{itemize}
Moreover $1_{\rho>0}$ denotes the indicator function, more precisely
$$
	1_{\rho>0}(t,\vx) = \left\{ \begin{array}{rl} 1 , & \text{ if }\rho(t,\vx)>0, \\ 0, &\text{ else}. \end{array} \right. 
$$

Now we are ready to recall the definition of a \emph{dissipative weak solution} from \cite{FLMS}. 

\begin{defn}[See {\cite[Defn.~5.6]{FLMS}}] \label{defn:dws}
	A pair 
	$$
		\rho\in \Cweak\big([0,T];L^\gamma(\T^n;\R^+_0)\big), \qquad \vm\in \Cweak\big([0,T];L^{\frac{2\gamma}{\gamma+1}}(\T^n;\R^n)\big)
	$$ 
	is called a \emph{dissipative weak solution} of the isentropic compressible Euler equations \eqref{eq:euler-mass}, \eqref{eq:euler-mom} (with pressure \eqref{eq:isentropic-pressure}) with initial data $\rho_0\in L^\gamma(\T^n;\R^+)$, $\vm_0\in L^{\frac{2\gamma}{\gamma+1}}(\T^n;\R^n)$ if there exists a Reynolds defect 
	\begin{equation} \label{eq:space-fracR}
		\mathfrak{R} \in L^\infty \big((0,T);\mathcal{M}^+ (\T^n; \sym{n})\big)
	\end{equation}
	such that the following assertions hold:
	\begin{itemize}
		\item The equations are satisfied in the following form
		\begin{align}
			\int_0^\tau\int_{\T^n} \Big[ \rho \partial_t \phi + \vm \cdot \Grad \phi\Big] \dx\dt &= \left[ \int_{\T^n} \rho \phi \dx \right]_{t=0}^{t=\tau}, \label{eq:dws-mass} \\
			\int_0^\tau\int_{\T^n} \Big[ \vm \cdot \partial_t \vphi + 1_{\rho>0} \frac{\vm\otimes \vm}{\rho} : \Grad \vphi + p(\rho) \Div \vphi \Big] \dx\dt \quad & \notag \\
			+ \int_0^\tau\int_{\T^n} \Grad\vphi : \dR(t) \dt &= \left[ \int_{\T^n} \vm\cdot \vphi \dx \right]_{t=0}^{t=\tau} \label{eq:dws-mom}
		\end{align}
		for all $\tau\in [0,T]$ and all test functions $\phi\in W^{1,\infty}\big((0,T)\times \T^n\big)$ and $\vphi\in C^M\big([0,T]\times \T^n; \R^n\big)$, $M\geq 1$. 
			
		\item The energy inequality holds in the following form 
		\begin{equation} \label{eq:dws-energy}
			\int_{\T^n} \left[ 1_{\rho>0} \frac{|\vm|^2}{2\rho} + P(\rho) \right](\tau, \cdot) \dx + \frac{1}{2} \int_{\T^n} \,{\rm d}(\tr \mathfrak{R})(\tau) \leq \int_{\T^n} \left[ \frac{|\vm_0|^2}{2\rho_0} + P(\rho_0) \right] \dx
		\end{equation}
		for almost all $\tau\in [0,T]$. 
	\end{itemize}
\end{defn}

\begin{rem} \label{rem:dws-energy-defect} 
	In \cite[Defn.~5.6]{FLMS} an energy defect $\mathfrak{E}$ is considered, and consequently the term $\frac{1}{2} \int_{\T^n} \,{\rm d}(\tr \mathfrak{R})(\tau)$ in the energy inequality \eqref{eq:dws-energy} is replaced by $\int_{\T^n} \dE(\tau)$. Furthermore, it is required in \cite[Defn.~5.6]{FLMS} that the defects are compatible in the sense that 
	$$
		\un{d} \mathfrak{E} \leq \tr \mathfrak{R} \leq \ov{d} \mathfrak{E} \qquad \text{ for some constants } 0 \leq \un{d} \leq \ov{d}. 
	$$ 
	As explained in \cite[Rem.~5.17]{FLMS}, one may get rid of the energy defect by setting $\mathfrak{E} = \frac{1}{\ov{d}} \tr \mathfrak{R}$. This is the reason why we stated Defn.~\ref{defn:dws} without mentioning an energy defect $\mathfrak{E}$. It remains to explain that one may choose $\ov{d}= 2$. To this end, we look at the more detailed treatment of the defect terms as carried out in \cite{BreFeiHof20_1}. Therein, the energy defect is split into a kinetic and an internal part, i.e.~$\mathfrak{E} = \mathfrak{E}_{\rm kin} + \mathfrak{E}_{\rm int}$. We observe from \cite[Defn.~2.1]{BreFeiHof20_1} that 
	$$
		\tr \mathfrak{R} = 2 \mathfrak{E}_{\rm kin} + n (\gamma-1) \mathfrak{E}_{\rm int}.
	$$
	Our assumption $\gamma-1 \leq \frac{2}{n}$ (see \eqref{eq:isentropic-pressure} therefore implies $\tr \mathfrak{R} \leq 2 \mathfrak{E}$, i.e.~$\ov{d}=2$ as desired. 
\end{rem} 

\begin{rem} \label{rem:dws-additional-facts} 
	\begin{itemize}
		\item Note that dissipative weak solutions may contain vacuum states, i.e.~the case $\rho=0$ is not excluded. Still we will be able to construct weak solutions which do \emph{not} contain vacuum as indicated already in Rem.~\ref{rem:vacuum} above. We will explain in Sect.~\ref{subsec:proof-outline} below how this is achieved. 
		
		\item In contrast to admissible weak solutions, where we require the local energy inequality to hold (see Defn.~\ref{defn:weak-sol}), a dissipative weak solution satisfies a global version of the energy inequality, i.e.~\eqref{eq:dws-energy}. We will point out in Sect.~\ref{subsec:proof-outline} below how the global energy inequality \eqref{eq:dws-energy} for dissipative weak solutions will finally lead to the local energy inequality \eqref{eq:euler-energy} of the weak solutions constructed in Sect.~\ref{subsec:proof-wildness} below.
		
		\item Any dissipative weak solution may be interpreted as a barycenter of a measure-valued solution, see \cite[Thm.~5.4]{FLMS} for more details.
	\end{itemize}
\end{rem} 

Dissipative weak solutions exist for arbitrary time intervals $[0,T]$:

\begin{prop} \label{prop:ex-dws}
	For any $T>0$ and initial data $\rho_0\in L^\gamma(\T^n;\R^+)$, $\vm_0\in L^{\frac{2\gamma}{\gamma+1}}(\T^n;\R^n)$ there exists a dissipative weak solution $(\rho,\vm)$ of the isentropic compressible Euler equations \eqref{eq:euler-mass}, \eqref{eq:euler-mom} (with pressure \eqref{eq:isentropic-pressure}).
\end{prop}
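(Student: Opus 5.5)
This existence result is the standard one of Feireisl et al., and I would prove it by a vanishing-viscosity (or consistent numerical scheme) approximation followed by a compactness argument in which the non-convergent nonlinearities are collected into the Reynolds defect $\mathfrak{R}$.

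\emph{Step 1: approximation.} I would regularise the initial data to smooth $(\rho_{0,\delta},\vm_{0,\delta})$ with $\rho_{0,\delta}>0$. The regularised data should converge to $(\rho_0,\vm_0)$ in $L^\gamma\times L^{\frac{2\gamma}{\gamma+1}}$, and their initial energies $\int \frac{|\vm_{0,\delta}|^2}{2\rho_{0,\delta}}+P(\rho_{0,\delta})\dx$ should converge to the initial energy of the limit data. This is possible whenever the latter is finite; if it is infinite, \eqref{eq:dws-energy} is void and only the integrability has to be handled. For these data I would solve the compressible Navier--Stokes system with artificial viscosity $\delta\Lap$ in both equations (or use the Brenner-type regularisation). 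Known theory gives global smooth or finite-energy weak solutions on $[0,T]$ satisfying the energy inequality uniformly in $\delta$. This supplies uniform bounds on $\rho_\delta$ in $L^\infty(L^\gamma)$, on $\vm_\delta/\sqrt{\rho_\delta}$ in $L^\infty(L^2)$, and hence on $\vm_\delta$ in $L^\infty(L^{\frac{2\gamma}{\gamma+1}})$ by Hölder's inequality.

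\emph{Step 2: weak limits.} Using the equations I obtain equicontinuity in time in negative Sobolev norms. Arzelà--Ascoli then gives, along a subsequence, $\rho_\delta\to\rho$ in $\Cweak([0,T];L^\gamma)$ and $\vm_\delta\to\vm$ in $\Cweak([0,T];L^{\frac{2\gamma}{\gamma+1}})$. The convective term $\frac{\vm_\delta\otimes\vm_\delta}{\rho_\delta}$ and the pressure $p(\rho_\delta)$ are bounded only in $L^\infty(L^1)$, so they converge weakly-$*$ to measures. I would define
$$\mathfrak{R}:=\ov{\tfrac{\vm\otimes\vm}{\rho}}-1_{\rho>0}\tfrac{\vm\otimes\vm}{\rho}+\big(\ov{p(\rho)}-p(\rho)\big)\id .$$
Convexity of $(\rho,\vm)\mapsto \xi\otimes\xi:\frac{\vm\otimes\vm}{\rho}$ and of $p$, via weak lower semicontinuity (using the Young-measure representation), gives $\mathfrak{R}\in L^\infty(0,T;\mathcal{M}^+(\T^n;\sym{n}))$. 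Passing to the limit in the weak formulation, with the viscous terms vanishing as $\delta\to0$, gives \eqref{eq:dws-mass} and \eqref{eq:dws-mom}.

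\emph{Step 3: energy inequality.} This is the main obstacle. I must show that the energy concentration controls $\tfrac12\tr\mathfrak{R}$. Taking the trace,
$$\tr\mathfrak{R}=\Big(\ov{\tfrac{|\vm|^2}{\rho}}-\tfrac{|\vm|^2}{\rho}\Big)+n\big(\ov{p}-p\big)=2\mathfrak{E}_{\rm kin}+n(\gamma-1)\big(\ov{P}-P\big),$$
where $\mathfrak{E}_{\rm kin}$ is the kinetic energy defect. Since $n(\gamma-1)\le 2$ and $\ov{P}-P\ge0$, it follows that $\tr\mathfrak{R}\le 2\mathfrak{E}$, where $\mathfrak{E}$ is the total energy defect; this is exactly Rem.~\ref{rem:dws-energy-defect}. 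Lower semicontinuity of the energy under weak limits, combined with the uniform approximate energy inequality and the convergence of the initial energies, then yields \eqref{eq:dws-energy} for a.e.\ $\tau$.

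The delicate points are the vacuum set, where $\frac{|\vm|^2}{\rho}$ has to be interpreted through the lower-semicontinuous convex extension (whence the factor $1_{\rho>0}$), and the claim that $\mathfrak{R}$ is bounded in time, which follows from the uniform energy bound. Alternatively, I would cite the full argument from \cite[Sect.~5]{FLMS}, where exactly this statement is shown.
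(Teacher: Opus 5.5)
Your architecture is the mechanism behind the result the paper cites: approximate, get uniform bounds from the energy inequality, pass to weak limits, collect the non-convergent parts of $\frac{\vm\otimes\vm}{\rho}$ and $p(\rho)$ into a positive semi-definite defect by convexity, and use $\tr\mathfrak{R}=2\mathfrak{E}_{\rm kin}+n(\gamma-1)\mathfrak{E}_{\rm int}\le 2\mathfrak{E}$. Indeed, \cite[Thms.~5.3 and 5.4]{FLMS} show that every consistent approximation generates a dissipative weak solution, and your Step~3 is exactly Rem.~\ref{rem:dws-energy-defect}. Steps~2 and~3 are fine.

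\textbf{The gap is in Step~1.} This is the only place where your route differs from the paper's, and it is the genuinely nontrivial input. ``Known theory'' does not give you $(\rho_\delta,\vm_\delta)$ for all admissible $\gamma$:
\begin{itemize}
\item For the system with artificial viscosity $\delta\Lap$ in both the mass and momentum equations, I know of no global existence result for large data in two or three space dimensions. The invariant regions that give $L^\infty$ bounds and a lower bound on the density in one dimension have no multi-dimensional substitute.
\item Global smooth solutions of compressible Navier--Stokes are not known for general large data.
\item The Lions--Feireisl finite-energy weak solutions require $\gamma>\frac n2$. For $n=3$ this excludes $1<\gamma\le\frac32$, which is part of the range $\gamma\le\frac53$ admitted here.
\end{itemize}
The paper does not rely on any of this. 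It takes the consistent approximation either from a numerical scheme \cite[Sect.~9 or Sect.~12]{FLMS}, whose discrete problems are always solvable and whose stability and consistency are proved there, or from the approximation in \cite[Sect.~3.2]{BreFeiHof20_1}. With such a sequence your Steps~2--3 go through essentially unchanged. For the same reason, your fallback citation of \cite[Sect.~5]{FLMS} alone is incomplete: that section supplies the limit passage, not the existence of a consistent approximation.

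\textbf{Infinite initial energy.} Your aside here does not hold. Every a priori bound you use comes from the energy, in particular the $L^\infty(L^1)$ bound on $\frac{\vm_\delta\otimes\vm_\delta}{\rho_\delta}$ and the $L^\infty$-in-time bound on $\mathfrak{R}$. So if $\int_{\T^n}\frac{|\vm_0|^2}{2\rho_0}\dx=\infty$, Step~1 gives nothing at all; this is not ``only an integrability issue''. The proposition has to be read for finite-energy data. That is all the paper needs, since it is only applied to $H^s$ data with $\rho_0>0$.
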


For the proof of Prop.~\ref{prop:ex-dws} we refer to \cite{FLMS}. Indeed, \cite[Thms.~5.3 and 5.4]{FLMS} reveal that every so-called \emph{consistent approximation} (see \cite[Defn.~5.4]{FLMS}) gives rise to a dissipative weak solution. Hence, it suffices to find a consistent approximation. It turns out that there exist several examples for consistent approximations, e.g.~numerical schemes as shown in \cite[Sect.~9 or Sect.~12]{FLMS}, or the approximation used in \cite[Sect.~3.2]{BreFeiHof20_1}.

We finish this subsection with recalling the following weak-strong-uniqueness principle. It says that dissipative weak solutions coincide with the strong solution as long as the latter exists.

\begin{prop} \label{prop:wsu}
	Let $(\rho,\vm)$ be a dissipative weak solution of the isentropic compressible Euler equations \eqref{eq:euler-mass}, \eqref{eq:euler-mom} (with pressure \eqref{eq:isentropic-pressure}) in $[0,T]\times \T^n$ with initial data $(\rho_0,\vm_0)$. Suppose that there exists a strong solution $(\rho_\ts,\vm_\ts)\in C^1\big([0,T]\times \T^n;\R^+\times \R^n\big)$ of \eqref{eq:euler-mass}, \eqref{eq:euler-mom} for the same initial data $(\rho_0,\vm_0)$. Then 
	$$
		\rho = \rho_\ts, \quad \vm=\vm_\ts, \quad \text{ and } \quad \mathfrak{R}=0 \qquad \text{ on } [0,T] \times \T^n.
	$$ 
\end{prop}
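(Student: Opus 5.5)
The plan is to use the relative energy method, following \cite[Sect.~5]{FLMS}. Write $(r,\vec{U}) := (\rho_\ts, \vm_\ts/\rho_\ts)$. Since $(\rho_\ts,\vm_\ts)\in C^1([0,T]\times\T^n)$ and $\rho_\ts>0$ on a compact set, there is $\un{r}>0$ with $\un{r}\le r\le \ov{r}$, and $\vec{U}\in C^1$. Define the relative energy
\begin{equation*}
	\mathcal{E}(\tau) := \int_{\T^n} \Big[ \tfrac12 \rho \Big|1_{\rho>0}\tfrac{\vm}{\rho} - \vec{U}\Big|^2 + P(\rho) - P'(r)(\rho - r) - P(r)\Big](\tau,\cdot)\dx + \tfrac12\int_{\T^n}{\rm d}(\tr\mathfrak{R})(\tau),
\end{equation*}
where the kinetic part is understood as $1_{\rho>0}\frac{|\vm|^2}{2\rho} - \vm\cdot\vec{U} + \frac12\rho|\vec{U}|^2$. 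Because $P$ is strictly convex and $\mathfrak{R}\ge 0$, we have $\mathcal{E}\ge 0$. Moreover $\mathcal{E}(0)=0$, since the initial data coincide and the defect enters only through the energy inequality \eqref{eq:dws-energy}, whose right-hand side is the initial energy.

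To obtain an inequality for $\mathcal{E}$, expand it as
\begin{equation*}
	\mathcal{E} = \text{(energy)} - \int \vm\cdot\vec{U} + \int \rho\big(\tfrac12|\vec{U}|^2 - P'(r)\big) + \int p(r),
\end{equation*}
using $rP'(r)-P(r)=p(r)$. The terms are then handled as follows. The energy term is controlled by \eqref{eq:dws-energy}. The term $\int\vm\cdot\vec{U}$ is computed from \eqref{eq:dws-mom} with the test function $\vphi=\vec{U}$. The density term is computed from \eqref{eq:dws-mass} with $\phi = \frac12|\vec{U}|^2 - P'(r)\in W^{1,\infty}$. For the momentum equation, $\vec{U}$ is only $C^1$ while the definition requires $C^M$, so I first mollify $\vec{U}$ in $(t,\vx)$. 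All terms are continuous in the $C^1$ topology because $\vm,\rho$ are integrable and $\mathfrak{R}\in L^\infty(0,T;\mathcal{M}^+)$, so one can pass to the limit. Next, I use the strong equations in the form $\partial_t\vec{U} + \vec{U}\cdot\Grad\vec{U} + \Grad P'(r) = 0$ and $\partial_t r+\Div(r\vec{U})=0$ to cancel the linear terms. This should yield, for a.e. $\tau$,
\begin{align*}
	\mathcal{E}(\tau) \le{}& -\int_0^\tau\!\!\int_{\T^n} \Big[ 1_{\rho>0}\rho\Big(\tfrac{\vm}{\rho}-\vec{U}\Big)\otimes\Big(\tfrac{\vm}{\rho}-\vec{U}\Big) : \Grad\vec{U} \\
	&\qquad + \big(p(\rho)-p'(r)(\rho-r)-p(r)\big)\Div\vec{U}\Big]\dx\dt - \int_0^\tau\!\!\int_{\T^n}\Grad\vec{U}:\dR\dt .
\end{align*}
On the vacuum set $\{\rho=0\}$ the momentum vanishes a.e., because the energy is finite. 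Hence the indicator $1_{\rho>0}$ causes no extra terms, but this should be checked carefully.

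Finally, each term on the right is bounded by $C\|\Grad\vec{U}\|_{L^\infty}\mathcal{E}(t)$. For the convective term this is immediate. For the Reynolds term, positive semidefiniteness of $\mathfrak{R}$ gives $|\Grad\vec{U}:\mathfrak{R}|\le |\Grad\vec{U}|\tr\mathfrak{R}$ in the sense of measures. For the pressure term, $p=(\gamma-1)P$ gives
\begin{equation*}
	p(\rho)-p'(r)(\rho-r)-p(r) = (\gamma-1)\big(P(\rho)-P'(r)(\rho-r)-P(r)\big).
\end{equation*}
Gronwall then gives $\mathcal{E}\equiv 0$ a.e. in time. This implies $\mathfrak{R}=0$ and $\rho=r$, using strict convexity of $P$. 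It also gives $\vm=\rho\vec{U}=\vm_\ts$ a.e.; weak continuity in time upgrades this to all $t$.

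The main obstacle is the derivation of the relative energy inequality itself. It involves justifying the test functions (regularity and mollification), handling the energy inequality holding only for a.e. $\tau$ versus the equations holding for all $\tau$, and exactly cancelling the linear terms via the strong equations.
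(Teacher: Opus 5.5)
Your proposal is correct and is essentially the argument the paper relies on. The paper gives no proof of its own but cites the relative energy/Gronwall argument of \cite[Sect.~6.1.2]{FLMS}, and the vacuum point you flag is settled there by the convention that the kinetic energy $\frac{|\vm|^2}{2\rho}$ is the convex lower semicontinuous extension, equal to $+\infty$ when $\rho=0$ and $\vm\neq 0$, so that finite energy forces $\vm=0$ a.e.\ on $\{\rho=0\}$ (with the indicator $1_{\rho>0}$ taken literally in \eqref{eq:dws-energy} this would not follow, and $\mathcal{E}\geq 0$ could fail).
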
 

The proof of Prop.~\ref{prop:wsu} can be found in \cite[Sect.~6.1.2]{FLMS}.

\section{Proof of the main result} \label{sec:proof} 

We are now ready to prove our main result, i.e.~Thm.~\ref{thm:density}. This section is organised as follows. In Sect.~\ref{subsec:proof-reduction} we reduce the statement of Thm.~\ref{thm:density} to a weaker version (see Thm.~\ref{thm:density-2}). We give a brief outline of the proof of Thm.~\ref{thm:density-2} in Sect.~\ref{subsec:proof-outline}. The remaining subsections, i.e.~Sect.~\ref{subsec:proof-modification} and \ref{subsec:proof-wildness}, are then devoted to the proof of Thm.~\ref{thm:density-2}. In Sect.~\ref{subsec:proof-modification} we construct the modified initial datum $(\rho_{0,\ep},\vm_{0,\ep})$ from the given datum $(\rho_0,\vm_0)$. Finally, in Sect.~\ref{subsec:proof-wildness}, we show that the initial datum $(\rho_{0,\ep},\vm_{0,\ep})$ is indeed wild in the sense of Defn.~\ref{defn:wild-data}.

\subsection{Reduction to a weaker version} \label{subsec:proof-reduction} 

Goal of this subsection is to reduce Thm.~\ref{thm:density} to the following version.

\begin{thm} \label{thm:density-2} 
	Let $(\rho_0,\vm_0)\in H^s(\T^n; \R^+\times \R)$ with $s>6$, and $\ep>0$. Then there exists a wild initial datum $(\rho_{0,\ep},\vm_{0,\ep})\in L^\infty(\T^n; \R^+\times \R)$ with 
	\begin{equation} \label{eq:thm-density-2-close}
		\|\rho_{0,\ep} - \rho_0\|_{L^\infty(\T^n)} < \ep, \qquad \|\vm_{0,\ep} - \vm_0\|_{L^2(\T^n)} <\ep , \quad \text{ and } \quad \|\vm_{0,\ep}\|_{L^\infty(\T^n)} \leq \|\vm_0\|_{L^\infty(\T^n)} + \ep.
	\end{equation}
\end{thm}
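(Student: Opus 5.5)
The plan is to build, for the smooth datum $(\rho_0,\vm_0)$, a strict subsolution on all of $[0,T]$ for an arbitrary $T>0$. Then we use Prop.~\ref{prop:ci-data} to make it attain the constitutive set at a small time $T_0$, and finally apply Prop.~\ref{prop:ci-nonuniqueness} on $[T_0,T_0+T]$ with the new initial datum $(\rho_{0,\ep},\vm_{0,\ep}):=(\ov{\rho},\widetilde{\vm})(T_0,\cdot)$. Since $T$ is arbitrary, we repeat the construction on longer intervals; the datum at $T_0$ must not depend on $T$. To arrange this we fix $T_0$ and the modification near $T_0$ once and for all, and extend the subsolution beyond $T_0+\beta$ in a way that is compatible with every $T$.

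\textbf{Step 1 (ansatz).} Solve backwards/forwards by Prop.~\ref{prop:ex-strong} to get a strong solution $(\rho_\ts,\vm_\ts)$ on $[-T_\ts,T_\ts]$. Take a dissipative weak solution on $[0,\infty)$ from Prop.~\ref{prop:ex-dws}; by Prop.~\ref{prop:wsu} it coincides with the strong one on $[0,T_\ts]$, and for $s>6$ this gives $C^1$ regularity there. The raw dissipative solution is neither smooth nor bounded away from vacuum. I would therefore mollify in space and time, with a cut-off in time so that nothing changes near $[0,T_\ts/2]$. I would also add a large constant $\lambda$ to the density, or rather to $\mU+q\id$, to stay away from vacuum.

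\textbf{Step 2 (subsolution).} Define $\ov{\rho}$ and $\ov{\vm}$ as these regularised quantities. Set
\[
\ov{\mU}+\ov{q}\id := \text{(mollified }\tfrac{\vm\otimes\vm}{\rho}+p(\rho)\id+\mathfrak{R}) + \text{commutator terms} + \lambda(t)\id,
\]
so that \eqref{eq:eulerlin-mom} holds exactly. The function $\lambda(t)>0$ is chosen increasing, which makes the Jensen/convexity defect of the mollification strictly negative definite in \eqref{eq:matrix-neg-def}. The energy flux $\ov{\vF}$ is obtained by solving $\Div\ov{\vF}= -\partial_t(\tfrac{n}{2}(\ov q-p(\ov\rho))+P(\ov\rho)) - \text{(positive slack)}$, which is possible on the torus once the mean of the right-hand side is nonpositive. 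That mean condition is precisely where the global energy inequality \eqref{eq:dws-energy} (with $\ov d=2$, using $\gamma\le1+\frac2n$) enters. Prop.~\ref{prop:U}\ref{item:U-1} then supplies a $C^1$ bound $Q$. On the initial strong-solution interval the subsolution is $(\rho_\ts,\vm_\ts,\ldots)$ plus a small $\lambda\id$, with $\vF$ close to the exact flux, so Prop.~\ref{prop:U}\ref{item:U-2} lets $Q=\ov q+\ep'$ there.

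\textbf{Step 3 (data and estimates).} Apply Prop.~\ref{prop:ci-data} at a small $T_0$ with $\beta<T_0$. Closeness of $\ov\rho(T_0)$ to $\rho_0$ in $L^\infty$ comes from time continuity of the strong solution and smallness of $\lambda$. For the $L^2$ momentum estimate, item~\ref{item:idiK-m-close} together with $\widetilde q\le Q=\ov q+\ep'$ at $T_0$ gives
\[
\|\widetilde{\vm}\|_{L^2}^2-\|\ov{\vm}\|_{L^2}^2\lesssim \int \ov\rho\, n(Q-\ov q)+\ldots,
\]
which is small. For the $L^\infty$ bound we use $|\widetilde{\vm}|^2\le n\ov\rho(Q-p(\ov\rho))$ pointwise. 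This is close to $|\vm_\ts|^2$ if the slack is chosen pointwise small, yielding $\|\vm_{0,\ep}\|_{L^\infty}\le\|\vm_0\|_{L^\infty}+\ep$.

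\textbf{Main obstacle.} I expect the main difficulty to be Step 2. It requires turning a merely measure-valued, possibly vacuous dissipative solution into a $C^1$ strict subsolution with positive density, an energy flux satisfying the pointwise inequality \eqref{eq:eulerlin-energy}, and a matrix inequality that holds strictly, all uniformly on arbitrarily long intervals. I would first try a time-dependent $\lambda(t)$ whose growth absorbs the mollification errors and the mean-value deficit of the energy equation.
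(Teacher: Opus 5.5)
Your overall architecture matches the paper: strong solution plus a dissipative weak solution, space-time mollification, a density shift, an added $\lambda(t)\id$ in $\mU+q\id$, a Poisson-type flux, Prop.~\ref{prop:U}\ref{item:U-2} near $T_0$ and Prop.~\ref{prop:U}\ref{item:U-1} afterwards, then Props.~\ref{prop:ci-data} and \ref{prop:ci-nonuniqueness}. However, the mechanism you propose for $\lambda$, which is the heart of your Step~2, does not work.

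\textbf{The choice of $\lambda$.} Integrating \eqref{eq:eulerlin-energy} over $\T^n$ forces $t\mapsto\int_{\T^n}\big(\tfrac n2(\ov q-p(\ov\rho))+P(\ov\rho)\big)\dx=\tfrac n2\lambda(t)+\int_{\T^n}(\ldots)\dx$ to be non-increasing.
\begin{enumerate}
\item On the strong-solution interval the second term is constant up to tiny mollification and shift fluctuations. Any genuine growth of $\lambda$ therefore violates the mean condition of your Poisson problem.
\item Later on, \eqref{eq:dws-energy} only compares with the \emph{initial} energy and gives no monotonicity of the mollified energy. So it cannot produce the mean condition for a prescribed $\lambda$ either.
\end{enumerate}
The paper reverses the logic. $\lambda$ is \emph{defined} as the complement that keeps the total energy equal to $\int_{\T^n}E_0\dx$ plus a non-increasing slack, so the mean condition holds by construction. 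Then \eqref{eq:dws-energy}, together with the $\tfrac12\tr\mathfrak R$ term, Jensen for $p$ and $\tfrac{2}{n(\gamma-1)}-1\geq0$, shows that this $\lambda$ stays above a positive constant, i.e.\ that the subsolution is strict. Monotonicity of $\lambda$ plays no role in \eqref{eq:matrix-neg-def}: there $\mM_1,\mM_3,R\geq0$, and only the small shift term $\mM_2$ has the wrong sign.

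\textbf{A circularity near $T_0$ that you do not address.} The strict-negativity margin defines $\sC$ and hence $\delta$ in Prop.~\ref{prop:U}\ref{item:U-2}. The margin must be fixed before $\delta$, so the part of $\lambda$ producing it cannot be $O(\delta)$. Its contribution $\tfrac n2\lambda\,\ov\vm/\ov\rho$ to the rigid flux must then be put into $\ov\vF$ exactly; a gradient correction instead would be of size comparable to $\lambda$, not $\delta$. The resulting term $\tfrac n2\lambda\Div(\vm_\ts/\rho_\ts)$ in \eqref{eq:eulerlin-energy} must be absorbed by $\tfrac n2\lambda'$. This forces that part of $\lambda$ to \emph{decay} like $\tfrac{1}{c^2}e^{-ct}$ with $c\geq\sup\Div(\vm_\ts/\rho_\ts)$, which is the opposite of your choice.

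\textbf{Independence of the datum from $T$.} Two further ingredients are missing.
\begin{enumerate}
\item The shift must be a small, time-independent constant $r$ added to $\rho$. A large one destroys $\|\rho_{0,\ep}-\rho_0\|_{L^\infty}<\ep$, and $\lambda$ does not enter the density at all. Because $r$ is fixed before $T$, the error $p(\rho_2+r)-p(\rho_2)$ must be controlled on $[0,T+T_\ts]$ uniformly in $T$. The paper obtains this from $\rho_2\leq\sigma^{-n}\int_{\T^n}\rho_0\dx$ via mass conservation.
\item The passage from the local piece ($Q=\ov q+\ep'$, near-rigid flux) to the global piece ($Q$ large, Poisson flux) must respect the linear constraints. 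A time cut-off blending mollified and unmollified fields, as you seem to suggest, produces terms like $\chi'(\rho-\rho\ast\phi_\sigma)$ in the mass equation. The paper instead mollifies the strong solution with the same $\sigma$, after extending the dissipative solution to negative times. By weak--strong uniqueness both pieces then coincide in $(\rho,\vm,\mU,q)$ on $(\tfrac14T_\ts,\tfrac12T_\ts)$. Only $\vF$ and $Q$ are interpolated by cut-offs, using convexity of $\sU_{\rho,Q}$, linearity of \eqref{eq:eulerlin-energy} in $\vF$, and monotonicity of $\sU_{\rho,Q}$ in $Q$.
\end{enumerate}
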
 

Note that there are three differences between Thms.~\ref{thm:density} and \ref{thm:density-2}:
\begin{itemize}
	\item The initial datum $(\rho_0,\vm_0)$ in Thm.~\ref{thm:density-2} has a certain Sobolev regularity while in Thm.~\ref{thm:density} it is merely integrable.
	
	\item The initial density $\rho_0$ in Thm.~\ref{thm:density-2} is strictly positive while in Thm.~\ref{thm:density} it may contain vacuum states.

	\item The distance between the data $(\rho_0,\vm_0)$ and $(\rho_{0,\ep},\vm_{0,\ep})$ is measured in $L^\infty$ for the density and in $L^2$ for the momentum in Thm.~\ref{thm:density-2} while it is measured with respect to a general $L^r$-norm (where $r\in [1,\infty)$) in Thm.~\ref{thm:density}. 
\end{itemize}

Next we show that Thm.~\ref{thm:density-2} indeed implies Thm.~\ref{thm:density}. The proof of Thm.~\ref{thm:density-2} will be carried out in Sects.~\ref{subsec:proof-modification} and \ref{subsec:proof-wildness} below.

\begin{proof}[Proof of Thm.~\ref{thm:density}] 
By mollification and adding a small constant to the density, we observe that $H^s(\T^n; \R^+\times \R)$ is dense in $L^r(\T^n; \R^+_0\times \R)$ for any $s>6$ and any $r\in [1,\infty)$. Thus it suffices to consider $(\rho_0,\vm_0)\in H^s(\T^n; \R^+\times \R)$ rather than $(\rho_0,\vm_0)\in L^r(\T^n; \R^+_0\times \R)$.

In order to prove \eqref{eq:thm-density-close}, we distinguish between two cases. Firstly, we suppose that $r\in[1,2]$. Then $\|\cdot\|_{L^r(\T^n)}\leq \|\cdot\|_{L^2(\T^n)}$ and thus \eqref{eq:thm-density-2-close} immediately implies \eqref{eq:thm-density-close} as desired.

Secondly, we assume that $r\in (2,\infty)$. We run Thm.~\ref{thm:density-2} with 
$$
	\ep':= \min \Big\{ \ep, 1, (2\|\vm_0\|_{L^\infty(\T^n)} +1 )^{1-\frac{r}{2}} \ep^{\frac{r}{2}} \Big\}. 
$$
Similarly as above, we immediately obtain 
$$
	\|\rho_{0,\ep} - \rho_0\|_{L^r(\T^n)}\leq \|\rho_{0,\ep} - \rho_0\|_{L^\infty(\T^n)} < \ep' \leq \ep.
$$
By interpolation, we infer from \eqref{eq:thm-density-2-close}
\begin{align*}
	\|\vm_{0,\ep} - \vm_0\|_{L^r(\T^n)} &\leq \|\vm_{0,\ep} - \vm_0\|_{L^2(\T^n)}^{\frac{2}{r}} \|\vm_{0,\ep} - \vm_0\|_{L^\infty(\T^n)}^{1- \frac{2}{r}} \\
	&< (\ep')^{\frac{2}{r}} (2\|\vm_0\|_{L^\infty(\T^n)} +\ep')^{1- \frac{2}{r}} \\
	&\leq (2\|\vm_0\|_{L^\infty(\T^n)} +1 )^{(1-\frac{r}{2})\frac{2}{r}} \ep (2\|\vm_0\|_{L^\infty(\T^n)} +1)^{1- \frac{2}{r}} = \ep.
\end{align*}
We have proven \eqref{eq:thm-density-close} as desired.
\end{proof}

\subsection{Outline of the proof and comparison with the literature} \label{subsec:proof-outline} 

The general ideas for proving a density-of-wild-data statement like Thm.~\ref{thm:density} can by expressed as follows:
\begin{itemize}
	\item[1.] Construct a subsolution for the initial data $(\rho_0,\vm_0)$, which will not necessarily satisfy \eqref{eq:0001}.
	\item[2.] Run Prop.~\ref{prop:ci-data} to adjust the data so that \eqref{eq:0001} holds and the new data $(\rho_{0,\ep},\vm_{0,\ep})$ are close to $(\rho_0,\vm_0)$ as desired.
	\item[3.] Run Prop.~\ref{prop:ci-nonuniqueness} in order to prove non-uniqueness of admissible weak solutions for the initial data $(\rho_{0,\ep},\vm_{0,\ep})$. 
\end{itemize}
We also refer to Rems.~\ref{rem:ci-nonuniqueness} and \ref{rem:ci-data} where the role of \eqref{eq:0001} is explained in detail. 

The aforementioned strategy has already been used in the density-of-wild-data result by \name{Chen}-\name{Vasseur}-\name{Yu}~\cite{CheVasYu21} and the one by \name{Chiodaroli}-\name{Feireisl}~\cite{ChiFei24_1}. In \cite{CheVasYu21} a solution of the compressible Navier-Stokes equations with degenerate viscosity is used to construct a suitable subsolution in the first step, while in \cite{ChiFei24_1} the strong solution, which exists locally in time, is used instead. In this paper a dissipative weak solution as considered in Sect.~\ref{subsec:bb-dws} will take this role. In particular, we will see that the Reynolds defect has the correct sign (in the sense of definiteness) to be included into the matrix $\mU$, see Sect.~\ref{subsubsec:infsol-s4} below. 

This will, however, lead to four difficulties: 
\begin{itemize}[leftmargin=3cm]
	\item[\textbf{Difficulty 1:}] Subsolutions are required to be $C^1$ while dissipative weak solutions do not have this property. In order to solve this issue, we will mollify the dissipative weak solution, see Sect.~\ref{subsubsec:infsol-s2}.
	
	\item[\textbf{Difficulty 2:}] Dissipative weak solutions may contain vacuum states. To overcome this problem, we will add a small constant $r>0$ to the density, see Sect.~\ref{subsubsec:infsol-s3}.
	
	\item[\textbf{Difficulty 3:}] Dissipative weak solutions only satisfy the global energy inequality \eqref{eq:dws-energy} but the solutions which we want to construct have to fulfill the local energy inequality \eqref{eq:euler-energy}. We still achieve the latter by using a convex integration ansatz where the energy inequality is part of the differential inclusion. The global energy inequality \eqref{eq:dws-energy} ensures that we can construct a suitable $\vF$ by solving a Poisson equation, see Sect.~\ref{subsubsec:infsol-s5}.
	
	\item[\textbf{Difficulty 4:}] If we then run Prop.~\ref{prop:ci-data} to adjust the initial data, we would not have control over the distance between $\vm_0$ and $\vm_{0,\ep}$. In order to fix this problem, we have to invoke Prop.~\ref{prop:U}~\ref{item:U-2}. Then, however, we need to guarantee that \eqref{eq:cond-K-2} is only mildly violated. To achieve the latter, we must ensure that the Reynolds defect is zero for small times. But this is true since any dissipative weak solution coincides on a short time interval with the strong solution, cf.~Prop.~\ref{prop:wsu}.
\end{itemize}

As mentioned before, we split the Proof of Thm.~\ref{thm:density-2} into two parts: Firstly, in Sect.~\ref{subsec:proof-modification} we modify the initial data, which requires a local-in-time subsolution that results from the strong solution (cf.~Difficulty 4). Secondly, in Sect.~\ref{subsec:proof-wildness} we construct a subsolution globally in time. As the latter must coincide for small times with the local-in-time subsolution produced in Sect.~\ref{subsec:proof-modification}, the preceding steps of mollification (see Difficulty 1) and adding $r$ (see Difficulty 2) have to be carried out also in the first part, i.e.~in the local-in-time construction.

In total, both parts consist of six steps: 
\begin{itemize}[leftmargin=1.9cm]
	\item[Step 1:] The starting points, i.e.~the local-in-time strong solution (see Sect.~\ref{subsubsec:moddata-s1}) and the (global-in-time) dissipative weak solution (see Sect.~\ref{subsubsec:infsol-s1}) will be explained.
	
	\item[Step 2:] We will carry out a mollification in order to overcome Difficulty 1, see Sects.~\ref{subsubsec:moddata-s2} and \ref{subsubsec:infsol-s2}.
	
	\item[Step 3:] We will add a positive constant $r>0$ to the density to solve Difficulty 2, see Sects.~\ref{subsubsec:moddata-s3} and \ref{subsubsec:infsol-s3}.
	
	\item[Step 4:] The functions $\mU$ and $q$ will be defined, see Sects.~\ref{subsubsec:moddata-s4} and \ref{subsubsec:infsol-s4}.
	
	\item[Step 5:] The function $\vF$ will be defined by solving a Poisson equation, see Sects.~\ref{subsubsec:moddata-s5} and \ref{subsubsec:infsol-s5}.
	
	\item[Step 6:] Props.~\ref{prop:ci-data} and \ref{prop:ci-nonuniqueness}, respectively, will be run in order to modify the initial data (see Sect.~\ref{subsubsec:moddata-s6}) and to obtain infinitely many admissible weak solutions (see Sect.~\ref{subsubsec:infsol-s6}).
\end{itemize}

\subsection{Modification of the initial datum} \label{subsec:proof-modification} 

Let us turn our attention towards the proof of Thm.~\ref{thm:density-2}. In this subsection we will construct an initial datum $(\rho_{0,\ep},\vm_{0,\ep})\in L^\infty(\T^n; \R^+\times \R)$ from $(\rho_0,\vm_0)\in H^s(\T^n; \R^+\times \R)$. This will be done in six steps as mentioned in Sect.~\ref{subsec:proof-outline}. Recall that $s>6$ and $\ep>0$ are given. 

In the sequel, we will have to mollify functions in space-time. To this end, let $\psi$ denote a standard one-dimensional mollifier. By setting $\varphi(\vx):= C\psi (|\vx|)$ with $C$ such that $\int_{\R^n} \varphi(\vx) \dx=1$, we obtain an $n$-dimensional mollifier. The space-time mollifier $\phi$, which will be used in this paper, is then defined as the product $\phi(t,\vx) := \psi(t) \varphi(\vx)= C \psi(t) \psi(|\vx|)$. For a function $u=u(t,\vx)$ and a parameter $\sigma>0$, we write $u\ast \phi_{\sigma}$ for the space-time mollification, i.e.~the convolution of $u$ with $\phi_\sigma(t,\vx):= \frac{1}{\sigma^{n+1}} \phi\big(\frac{t}{\sigma}, \frac{\vx}{\sigma}\big)$. Similarly, $u \ast_t \psi_\sigma$ and $u\ast_\vx \varphi_\sigma$ denote the mollification solely in time and solely in space, respectively, where $\psi_\sigma(t):= \frac{1}{\sigma} \psi\big(\frac{t}{\sigma}\big)$ and $\varphi_\sigma(\vx):= \frac{1}{\sigma^{n}} \varphi\big(\frac{\vx}{\sigma}\big)$. The way we chose the mollifier $\phi$, there holds $u\ast \phi_{\sigma} = (u\ast_t \psi_\sigma) \ast_\vx \varphi_\sigma = (u\ast_\vx \varphi_\sigma) \ast_t \psi_\sigma$, i.e.~instead of mollifying in space-time, we may mollify subsequently in time and afterwards in space, or vice versa. This will be essential in the proofs of Lemmas~\ref{lemma:estimate-lambda-ov}, \ref{lemma:pdes-mollified-dws} and \ref{lemma:estimate-lambda-un}, as well as equation \eqref{eq:bound-rho2} below.

\subsubsection{Step 1: Strong solution locally in time} \label{subsubsec:moddata-s1} 

Prop.~\ref{prop:ex-strong} yields $T_\ts>0$ and a unique strong solution 
$$
	(\rho_\ts,\vm_\ts)\in C^0\big([-T_\ts,T_\ts];H^6(\T^n;\R^+\times \R^n)\big) \cap C^1\big([-T_\ts,T_\ts];H^5(\T^n;\R^+\times \R^n)\big)
$$ 
of \eqref{eq:euler-mass}-\eqref{eq:initial} with initial datum $(\rho_{0},\vm_{0})$. According to Sobolev embedding, we have $(\rho_\ts,\vm_\ts)\in C^1\big([-T_\ts,T_\ts];C^3(\T^n;\R^+\times \R^n)\big)$. 

For later use, we fix a constant $c>1$ such that 
\begin{equation} \label{eq:bounds-c-strongsol} 
	\rho_\ts(t,\vx) \geq \frac{1}{c} , \qquad \rho_\ts(t,\vx) + \ep \leq c, \qquad |\vm_\ts(t,\vx)| \leq c \quad \text{ and }\quad \Div \left( \frac{\vm_\ts(t,\vx)}{\rho_\ts(t,\vx)} \right) \leq c
\end{equation}
for all $(t,\vx)\in [-T_\ts,T_\ts]\times \T^n$, as well as
\begin{equation} \label{eq:bounds-c-ep} 
	\frac{1}{c} < \frac{\ep^2}{288 n}\qquad \text{ and } \qquad \frac{\ep^2}{144} \leq c^4, 
\end{equation}
and such that $c$ can be taken as the implicit constant in \eqref{eq:poisson-sobolev} in Cor.~\ref{cor:poisson-mf-sobolev}.

Moreover, we define compact and convex sets
\begin{align*}
	\sC_\rho &:= \left[\frac{1}{c}, c\right]\subset \R^+, \\ 
	\sC_\vm &:= \left\{ \vm\in \R^n \,\Big| \, |\vm|\leq c\right\} \subset \R^n, \\
	\sC_\mU &:= \left\{ \mU\in \symz{n} \,\Big|\, \text{all eigenvalues of }\mU\text{ are }\leq (n-1) \left(\frac{2c^3}{n} + p(c)\right) \right\} \subset \symz{n}, \\ 
	\sC_q &:= \left[ 0 , \frac{2c^3}{n} + p(c) \right] \subset \R^+_0. 
\end{align*} 

By construction and the choice of $c$ (see \eqref{eq:bounds-c-strongsol}), there holds 
$$
	(\rho_\ts, \vm_\ts, \mU_\ts, q_\ts) (t,\vx) \in \sC_\rho\times \sC_\vm \times \sC_\mU \times \sC_q \qquad \text{ for all } (t,\vx)\in [-T_\ts,T_\ts]\times \T^n
$$
where
\begin{align}
	\mU_\ts &:= \frac{\vm_\ts \otimes \vm_\ts}{\rho_\ts} - \frac{|\vm_\ts|^2}{n\rho_\ts} \id, & q_\ts &:= \frac{|\vm_\ts|^2}{n \rho_\ts} + p(\rho_\ts) . \label{eq:strong-U-q} 
\end{align} 

In the sequel, let $\delta>0$ from Prop.~\ref{prop:U}~\ref{item:U-2} where $\frac{\ep^2}{48 n c}$ plays the role of $\ep$, and 
\begin{equation} \label{eq:defn-C}
	\sC := \left\{ (\rho, \vm,\mU, q) \in \sC_\rho\times\sC_\vm\times \sC_\mU\times \sC_q\,\Big|\, \frac{\vm\otimes\vm}{\rho} + p(\rho)\id - \mU - q\id \leq - \frac{1}{c^2} \exp(-c T_\ts) \id \right\} . 
\end{equation} 
Note that $\sC$ is compact since it is a closed subset of the compact set $\sC_\rho\times\sC_\vm\times \sC_\mU\times \sC_q$. We may furthermore assume without loss of generality that 
\begin{equation} \label{eq:bound-delta}
	\delta\leq \frac{\ep^2}{72 c^4} 
\end{equation}
by shrinking $\delta$ if necessary.

\subsubsection{Step 2: Mollification of the local-in-time solution and definition of $\ov{\vm}$} \label{subsubsec:moddata-s2} 

Next, we mollify the strong solution $(\rho_\ts,\vm_\ts)$ with parameter $0<\sigma<\half T_\ts$, i.e.~we set
\begin{align*} 
	\rho_1 &:= \rho_\ts \ast \phi_{\sigma} , &
	\ov{\vm} &:= \vm_\ts \ast \phi_{\sigma} , 
\end{align*}
where $\sigma$ is chosen sufficiently small such that the following estimates hold: 
\begin{align} 
	\begin{split}
		& \bullet \quad \|\rho_1(0,\cdot) - \rho_0\|_{L^\infty(\T^n)} < \tfrac{1}{4}\ep \qquad \text{ and } \qquad \|\ov{\vm}(0,\cdot) - \vm_0\|_{L^\infty(\T^n)} < \tfrac{1}{4}\ep, 
	\end{split} \label{eq:est-molli-rm} \\[6mm]
	\begin{split} 
		& \bullet \quad \left( \frac{\vm_\ts\otimes \vm_\ts}{\rho_\ts} + p(\rho_\ts) \id \right) \ast \phi_\sigma - \left( \frac{\ov{\vm} \otimes \ov{\vm}}{\rho_1} + p(\rho_1) \id \right) \leq \frac{\ep^2}{144 nc } \id \\ 
		& \qquad\qquad\qquad\qquad \text{ for all }(t,\vx)\in \big[0,\half T_\ts\big]\times \T^n,
	\end{split} \label{eq:est-molli-matrix} \\[6mm]
	\begin{split} 
		& \bullet \quad p(\rho_\ts) \ast \phi_\sigma - p(\rho_1) \leq \frac{(\gamma-1) \delta}{16 c^2} \qquad\qquad \text{ for all }(t,\vx)\in \big[0,\half T_\ts\big]\times \T^n, 
	\end{split} \label{eq:est-molli-pressure} \\[6mm] 
	\begin{split} 
		& \bullet \quad \left| \frac{\vm_\ts}{\rho_\ts} - \frac{\ov{\vm}}{\rho_1}\right| \leq \frac{\delta c^2}{4n} \qquad\qquad \text{ for all }(t,\vx)\in \big[0,\half T_\ts\big]\times \T^n,
	\end{split} \label{eq:est-molli-velocity} \\[6mm] 
	\begin{split} 
		& \bullet \quad \left|  \frac{\frac{n}{2} \big(q_\ts - p(\rho_\ts)\big) + P(\rho_\ts) + p(\rho_\ts)}{\rho_\ts} \vm_\ts - \frac{\frac{n}{2} \big(q_\ts \ast \phi_{\sigma} - p(\rho_1)\big) + P(\rho_1) + p(\rho_1)}{\rho_1} \ov{\vm} \right| \leq \frac{\delta}{8} \\ 
		& \qquad\qquad\qquad\qquad \text{ for all }(t,\vx)\in \big[0,\half T_\ts\big]\times \T^n,
	\end{split} \label{eq:est-molli-energyflux} \\[6mm]
	\begin{split}
		& \bullet \quad \Big\| p(\rho_\ts) \ast \phi_\sigma - p(\rho_1)\Big\|_{C^1\big(\big[0,\half T_\ts\big]\times\T^n\big)} \leq \frac{(\gamma - 1)\delta}{16 c},
	\end{split} \label{eq:est-molli-pressurederivative} \\[6mm]
	\begin{split}
		& \bullet \quad \left\| \Big( \tfrac{n}{2} \big(q_\ts - p(\rho_\ts)\big) + P(\rho_\ts) \Big) - \Big( \tfrac{n}{2} \big(q_\ts \ast \phi_{\sigma} - p(\rho_1)\big) + P(\rho_1) \Big) \right\|_{C^1\big(\big[0,\half T_\ts\big]\times\T^n\big)} \leq \frac{\delta}{8 c}.
	\end{split} \label{eq:est-molli-energy} 
\end{align}

Note that
\begin{equation} \label{eq:subsol1-reg-m}
	(\rho_1,\ov{\vm}) \in C^\infty\big(\big[0,\half T_\ts\big]\times \T^n;\R^+\times \R^n\big).
\end{equation}
Moreover, as $\sC_\rho$ and $\sC_\vm$ are convex, there holds 
\begin{equation} \label{eq:subsol1-m-in-C}
	(\rho_1,\ov{\vm})(t,\vx)\in \sC_\rho\times\sC_\vm\qquad \text{ for all }(t,\vx)\in \big[0,\half T_\ts\big]\times \T^n.
\end{equation}
In addition to that, we observe that the PDEs 
\begin{align} 
	\partial_t \rho_1 + \Div \ov{\vm} &= 0, \label{eq:subsol1-mass-nolowerbound} \\
	\partial_t \ov{\vm} + \Div \bigg[\underbrace{\bigg(\frac{\vm_\ts \otimes \vm_\ts}{\rho_\ts} + p(\rho_\ts) \id\bigg)}_{=\mU_\ts + q_\ts\id}\ast \phi_{\sigma}\bigg] &= 0, \label{eq:subsol1-mom-nolambda} 
\end{align}
hold pointwise in $\big[0,\half T_\ts\big]\times \T^n$.

\subsubsection{Step 3: A lower bound for the density and definition of $\ov{\rho}$} \label{subsubsec:moddata-s3} 

Now we may fix $0<r<\tfrac{1}{4}\ep$ sufficiently small such that the following estimates hold: 
\begin{align} 
	\begin{split}
		& \bullet \quad p(\rho+r) - p(\rho) \leq \frac{(\gamma-1) \delta}{16 c^2} \qquad\qquad \text{ for all } \rho\in \left[0,\left(\frac{1}{\sigma}\right)^{n}\int_{\T^n} \rho_0 \dx\right]\cup \sC_\rho, 
	\end{split} \label{eq:est-shift-pressure} \\[6mm]
	\begin{split}
		& \bullet \quad \frac{1}{\rho} - \frac{1}{\rho + r} \leq \frac{\delta c}{4n} \qquad\qquad \text{ for all } \rho\in \sC_\rho, 
	\end{split} \label{eq:est-shift-density} \\[6mm]
	\begin{split}
		& \bullet \quad \left| \frac{\frac{n}{2} \big(q - p(\rho+r)\big) + P(\rho+r) + p(\rho+r)}{\rho+r} - \frac{\frac{n}{2} \big(q- p(\rho)\big) + P(\rho) + p(\rho)}{\rho} \right| \leq \frac{\delta}{8c} \\ 
		& \qquad\qquad\qquad\qquad \text{ for all }(\rho,q)\in \sC_\rho\times \sC_q, 
	\end{split} \label{eq:est-shift-energyflux} \\[6mm] 
	\begin{split}
		& \bullet \quad \Big| p'(\rho+r)-p'(\rho) \Big|\leq \frac{(\gamma-1) \delta}{16 c  \left( \|\rho_1\|_{C^1\big(\big[0,\half T_\ts\big]\times\T^n\big)} + 1 \right)} \qquad\qquad \text{ for all }\rho\in \sC_\rho. 
	\end{split} \label{eq:est-shift-pressurederivative} 
\end{align}
Note that this is possible as the mappings 
$$ 
	\rho\mapsto p(\rho), \quad \rho\mapsto \frac{1}{\rho}, \quad (\rho,q)\mapsto \frac{\frac{n}{2} \big(q- p(\rho)\big) + P(\rho) + p(\rho)}{\rho}, \quad \text{ and } \quad \rho\mapsto p'(\rho) 
$$ 
are uniformly continuous on compact sets. 

We then set 
$$
	\ov{\rho} := \rho_1 + r = \rho_\ts \ast \phi_{\sigma} + r.
$$
Using \eqref{eq:subsol1-reg-m}, we observe that 
\begin{equation} \label{eq:subsol1-reg-rho}
	\ov{\rho}\in C^\infty\big(\big[0,\half T_\ts\big]\times \T^n;\R^+\big).
\end{equation}
Furthermore, due to \eqref{eq:bounds-c-strongsol} and the fact that $r<\ep$, we have
\begin{equation} \label{eq:subsol1-rho-in-C}
	\ov{\rho}(t,\vx)\in \sC_\rho\qquad \text{ for all }(t,\vx)\in \big[0,\half T_\ts\big]\times \T^n,
\end{equation}
and -- in view of \eqref{eq:subsol1-mass-nolowerbound} -- the PDE
\begin{equation} \label{eq:subsol1-mass} 
	\partial_t \ov{\rho} + \Div \ov{\vm} = 0, 
\end{equation}
holds pointwise in $\big[0,\half T_\ts\big]\times \T^n$.

\subsubsection{Step 4: Definition and properties of $\ov{\mU}$ and $\ov{q}$} \label{subsubsec:moddata-s4} 

Let us now define
\begin{align*}
	\ov{\mU} &:= \mU_\ts \ast \phi_{\sigma}  ,&
	\ov{q} &:= q_\ts \ast \phi_{\sigma} + \ov{\lambda}(t),
\end{align*}
where $\mU_\ts$ and $q_\ts$ are defined in \eqref{eq:strong-U-q}, and $\ov{\lambda}(t)$ is given by 
\begin{align*} 
	\ov{\lambda}(t) &:= - \int_{\T^n} \Big( q_\ts \ast \phi_{\sigma} - p(\ov{\rho}) + \tfrac{2}{n} P(\ov{\rho}) \Big) \dx + \int_{\T^n}  \frac{2}{n} \left( \frac{|\vm_0|^2}{2\rho_0} + P(\rho_0) \right) \dx \\
	&\qquad + \frac{\delta}{8 n c^2} + \frac{1}{c^2} \exp( - ct ). 
\end{align*}

Note that
\begin{equation} \label{eq:subsol1-reg-Uq}
	(\ov{\mU},\ov{q})\in C^\infty\big(\big[0,\half T_\ts\big]\times \T^n; \symz{n} \times \R\big).
\end{equation} 
Due to the convexity of $\sC_\mU$, there holds
\begin{equation} \label{eq:subsol1-U-in-C}
	\ov{\mU}(t,\vx)\in \sC_\mU \qquad \text{ for all }(t,\vx)\in \big[0,\half T_\ts\big]\times \T^n.
\end{equation}
Furthermore, according to \eqref{eq:subsol1-mom-nolambda}, the PDE 
\begin{equation} \label{eq:subsol1-mom} 
	\partial_t \ov{\vm} + \Div (\ov{\mU} + \ov{q} \id) = 0 
\end{equation}
holds pointwise in $\big[0,\half T_\ts\big]\times \T^n$. 

Let us next find bounds for $\ov{\lambda}(t)$.

\begin{lemma} \label{lemma:estimate-lambda-ov} 
	For any $t\in \big[0,\half T_\ts\big]$ the following estimates hold
	\begin{align*} 
		&0 < \frac{(\gamma-1) \delta}{16c^2} + \frac{1}{c^2} \exp( - ct ) \leq \ov{\lambda}(t) \leq \frac{\delta}{4 n c^2} + \frac{1}{c^2} \exp( - ct ) \leq \frac{\ep^2}{144 n c}, \\ 
		& \left| \partial_t \left(\ov{\lambda}(t) -\frac{1}{c^2} \exp(-ct) \right) \right| \leq \frac{\delta}{4nc} . 
	\end{align*}
\end{lemma}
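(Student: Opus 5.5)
Write $\ov{\lambda}(t) = \Lambda(t) + \frac{\delta}{8nc^2} + \frac{1}{c^2}\exp(-ct)$, where
$$
	\Lambda(t) := \frac{2}{n}\int_{\T^n}\left(\frac{|\vm_0|^2}{2\rho_0}+P(\rho_0)\right)\dx - \int_{\T^n}\Big(q_\ts\ast\phi_\sigma - p(\ov{\rho}) + \tfrac{2}{n}P(\ov{\rho})\Big)\dx .
$$
The plan is to show that $\Lambda(t)$ is small: it should lie in $\big[\frac{(\gamma-1)\delta}{16c^2}-\frac{\delta}{8nc^2},\,\frac{\delta}{8nc^2}\big]$ and satisfy $|\Lambda'(t)|\le \frac{\delta}{4nc}$. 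The main input is energy conservation of the strong solution. By \eqref{eq:strong-U-q},
$$
	q_\ts - p(\rho_\ts) + \tfrac{2}{n}P(\rho_\ts) = \tfrac{2}{n}\Big(\tfrac{|\vm_\ts|^2}{2\rho_\ts}+P(\rho_\ts)\Big),
$$
and the strong solution satisfies the energy equality, so $\int_{\T^n}\big(q_\ts - p(\rho_\ts)+\frac{2}{n}P(\rho_\ts)\big)(t,\cdot)\dx$ equals the first term of $\Lambda$ for every $t\in[-T_\ts,T_\ts]$.

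Next use that space-time mollification commutes with the spatial integral. Because the mollifier factorises as $\psi(t)\varphi(\vx)$ and $|\T^n|=1$, we have $\int_{\T^n} u\ast\phi_\sigma\dx = \big(\int_{\T^n}u\dx\big)\ast_t\psi_\sigma$. For $t\in[0,\frac12 T_\ts]$ and $\sigma<\frac12T_\ts$ the time window stays inside $[-T_\ts,T_\ts]$, and the spatial integral is constant in time, so it is reproduced exactly. Consequently
$$
	\Lambda(t) = \int_{\T^n}\Big[\big(p(\ov{\rho}) - p(\rho_\ts)\ast\phi_\sigma\big) - \tfrac{2}{n}\big(P(\ov{\rho}) - P(\rho_\ts)\ast\phi_\sigma\big)\Big]\dx .
$$
Since $P=\frac{1}{\gamma-1}p$, this becomes $\big(1-\frac{2}{n(\gamma-1)}\big)\int_{\T^n}\big(p(\ov{\rho})-p(\rho_\ts)\ast\phi_\sigma\big)\dx$. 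The assumption $\gamma\le 1+\frac2n$ from \eqref{eq:isentropic-pressure} gives $\frac{2}{n(\gamma-1)}\ge1$, so the prefactor is $\le 0$.

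Split $p(\ov{\rho})-p(\rho_\ts)\ast\phi_\sigma = \big(p(\rho_1+r)-p(\rho_1)\big) + \big(p(\rho_1)-p(\rho_\ts)\ast\phi_\sigma\big)$. The first bracket is $\ge0$ and bounded by \eqref{eq:est-shift-pressure}, since $\rho_1\in\sC_\rho$. The second bracket is $\le 0$ by Jensen's inequality, because $p$ is convex and $\phi_\sigma\ge0$ with unit mass, and it is bounded from below by \eqref{eq:est-molli-pressure}. Together these give $\big|\int_{\T^n}(p(\ov{\rho})-p(\rho_\ts)\ast\phi_\sigma)\dx\big|\le \frac{(\gamma-1)\delta}{16c^2}$.

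Multiplying by the prefactor, whose modulus is $\frac{2}{n(\gamma-1)}-1\le\frac{2}{n(\gamma-1)}$, yields $|\Lambda|\le \frac{\delta}{8nc^2}$. This gives the upper bound $\ov{\lambda}\le\frac{\delta}{4nc^2}+\frac1{c^2}e^{-ct}$ and the lower bound $\ov{\lambda}\ge \frac1{c^2}e^{-ct}\ge \frac{(\gamma-1)\delta}{16c^2}+\frac1{c^2}e^{-ct}$. The lower bound is slightly wasteful but suffices; for it one needs $\gamma-1\le 2/n$ so that $\frac{(\gamma-1)\delta}{16c^2}\le \frac{\delta}{8nc^2}$, which holds.

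For the final inequality, use $\frac{\delta}{4nc^2}\le \frac{\ep^2}{288nc^6}$ from \eqref{eq:bound-delta} and $\frac{1}{c^2}=\frac1c\cdot\frac1c<\frac{\ep^2}{288n}\cdot\frac{1}{c}$ from \eqref{eq:bounds-c-ep}. Their sum is then at most $\frac{\ep^2}{144nc}$, using $c>1$.

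For the derivative, note that $\partial_t\big(\ov{\lambda}-\frac1{c^2}e^{-ct}\big)=\Lambda'(t)$. Differentiate the representation of $\Lambda$ under the integral and use the $C^1$ bounds. The mollification term is controlled by \eqref{eq:est-molli-pressurederivative}, which gives $\le \frac{(\gamma-1)\delta}{16c}$. For the shift term, write $\partial_t\big(p(\rho_1+r)-p(\rho_1)\big)=\big(p'(\rho_1+r)-p'(\rho_1)\big)\partial_t\rho_1$ and apply \eqref{eq:est-shift-pressurederivative}, which gives $\le\frac{(\gamma-1)\delta}{16c}$. Multiplying their sum by $\frac{2}{n(\gamma-1)}$ gives $|\Lambda'|\le \frac{\delta}{4nc}$.

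The main obstacle is justifying exact reproduction of the initial energy by the time-mollified spatial integral. This needs energy conservation of the strong solution on the backward interval as well, which is why Prop.~\ref{prop:ex-strong} is posed on $[-T_\ts,T_\ts]$. It also needs care with the sign so that Jensen's inequality and $\gamma\le1+\frac2n$ combine correctly.
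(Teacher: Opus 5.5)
You follow the paper's route. Energy conservation of the strong solution, together with the fact that the spatial integral commutes with the space-time mollification, reduces $\ov{\lambda}(t)-\frac{\delta}{8nc^2}-\frac{1}{c^2}e^{-ct}$ to a multiple of $\int_{\T^n}\big(p(\ov{\rho})-p(\rho_\ts)\ast\phi_\sigma\big)\dx$. You then split through $\rho_1$ and bound the pieces with Jensen, \eqref{eq:est-molli-pressure} and \eqref{eq:est-shift-pressure}. Your upper bound, the derivative bound and the final $\frac{\ep^2}{144nc}$ bound are correct and match the paper's.

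\textbf{The lower bound is wrong as written.} Replacing the prefactor's modulus $\frac{2}{n(\gamma-1)}-1$ by $\frac{2}{n(\gamma-1)}$ only gives the symmetric bound $|\Lambda|\le\frac{\delta}{8nc^2}$, hence $\ov{\lambda}\ge\frac{1}{c^2}e^{-ct}$. The inequality you then write, $\frac{1}{c^2}e^{-ct}\ge\frac{(\gamma-1)\delta}{16c^2}+\frac{1}{c^2}e^{-ct}$, is false because $\frac{(\gamma-1)\delta}{16c^2}>0$. Your remark about $\gamma-1\le\frac{2}{n}$ does not repair it: the symmetric bound throws away exactly the term you need. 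That term matters downstream. It is what cancels $\ov{\mM}_2\le\frac{(\gamma-1)\delta}{16c^2}\id$ in \eqref{eq:matrix-upper-bound}. With only $\ov{\lambda}\ge\frac{1}{c^2}e^{-ct}$ you would not reach the bound $-\frac{1}{c^2}e^{-cT_\ts}\id$ that defines $\sC$ in \eqref{eq:defn-C}.

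\textbf{Fix.} Keep the exact prefactor and use one-sided information. Set $X:=\int_{\T^n}\big(p(\ov{\rho})-p(\rho_\ts)\ast\phi_\sigma\big)\dx$. Jensen makes the mollification part nonpositive and \eqref{eq:est-shift-pressure} bounds the shift part, so $X\le\frac{(\gamma-1)\delta}{16c^2}$. Since $1-\frac{2}{n(\gamma-1)}\le 0$,
\[
\Lambda=\Big(1-\tfrac{2}{n(\gamma-1)}\Big)X\ \ge\ \Big(1-\tfrac{2}{n(\gamma-1)}\Big)\frac{(\gamma-1)\delta}{16c^2}=\frac{(\gamma-1)\delta}{16c^2}-\frac{\delta}{8nc^2}.
\]
This is the left endpoint you announced at the start of your plan, and it gives $\ov{\lambda}\ge\frac{(\gamma-1)\delta}{16c^2}+\frac{1}{c^2}e^{-ct}$, as in the paper. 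With this correction the rest of your argument goes through.
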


\begin{proof} 
	It is a simple observation that 
	\begin{equation} \label{eq:total-mass-mollification}
		\int_{\T^n}  f(\vx) \dx = \int_{\T^n} (f \ast_\vx \varphi_\sigma)(\vx) \dx 
	\end{equation}
	for a function $f$ satisfying $f\geq 0$. Moreover, is well-known that strong solutions conserve the total energy, i.e.
	$$
		 \int_{\T^n}  \left( \frac{|\vm_\ts|^2}{2\rho_\ts} + P(\rho_\ts) \right) \dx 
	$$
	is constant for all $t\in[-T_\ts,T_\ts]$. Combining these two facts, we find that 
	\begin{align*}
		\int_{\T^n}  \left( \frac{|\vm_\ts|^2}{2\rho_\ts} + P(\rho_\ts) \right) \ast \phi_\sigma \dx &= \left( \int_{\T^n}  \left( \frac{|\vm_\ts|^2}{2\rho_\ts} + P(\rho_\ts) \right) \ast_\vx \varphi_\sigma \dx \right) \ast_t \psi_\sigma \\
		&=  \int_{\T^n}  \left( \frac{|\vm_0|^2}{2\rho_0} + P(\rho_0) \right) \dx 
	\end{align*}
	for all $t\in\big[0,\half T_\ts\big]$. Consequently there holds\footnote{Recall \eqref{eq:strong-U-q}.}
	$$
		- \int_{\T^n} \Big( q_\ts - p(\rho_\ts) + \tfrac{2}{n} P(\rho_\ts) \Big) \ast \phi_{\sigma} \dx + \int_{\T^n}  \frac{2}{n} \left( \frac{|\vm_0|^2}{2\rho_0} + P(\rho_0) \right) \dx \ = \ 0
	$$
	for $t\in\big[0,\half T_\ts\big]$. Hence we may simplify $\ov{\lambda}$ to 
	\begin{align} 
		\ov{\lambda}(t) &= - \int_{\T^n} \Big(- p(\ov{\rho}) + \tfrac{2}{n} P(\ov{\rho}) \Big) \dx + \int_{\T^n} \Big( - p(\rho_\ts) + \tfrac{2}{n} P(\rho_\ts) \Big) \ast \phi_{\sigma} \dx \label{eq:lambda-better-defn} \\
		&\qquad + \frac{\delta}{8 n c^2} + \frac{1}{c^2} \exp( - ct ). \notag
	\end{align}
	
	Keeping in mind that $\gamma\leq 1+ \frac{2}{n}$ and hence $\tfrac{2}{n} \, \tfrac{1}{\gamma-1} - 1\geq 0$, we obtain
	\begin{align*} 
		&- \int_{\T^n} \Big(- p(\ov{\rho}) + \tfrac{2}{n} P(\ov{\rho}) \Big) \dx + \int_{\T^n} \Big( - p(\rho_\ts) + \tfrac{2}{n} P(\rho_\ts) \Big) \ast \phi_{\sigma} \dx \\
		&= \Big( \tfrac{2}{n} \, \tfrac{1}{\gamma-1} - 1 \Big) \left( \int_{\T^n} \Big( p(\rho_\ts) \ast \phi_{\sigma} - p(\rho_1) \Big) \dx + \int_{\T^n} \Big( p(\rho_1) - p(\ov{\rho}) \Big) \dx \right) \\
		&\left\{ \begin{array}{l} \leq \Big( \tfrac{2}{n} \, \tfrac{1}{\gamma-1} - 1 \Big) \frac{(\gamma-1) \delta}{16 c^2}, \\ \geq - \Big( \tfrac{2}{n} \, \tfrac{1}{\gamma-1} - 1 \Big) \frac{(\gamma-1) \delta}{16 c^2} .\end{array} \right. 
	\end{align*}
	Here we made use of 
	$$
		0 \leq \int_{\T^n} \Big( p(\rho_\ts) \ast \phi_{\sigma} - p(\rho_1) \Big) \dx \leq \frac{(\gamma-1) \delta}{16 c^2}, 
	$$
	which hold according to Jensen's inequality (Lemma~\ref{lemma:jensen-real-valued}) and \eqref{eq:est-molli-pressure}, as well as 
	$$
		- \frac{(\gamma-1) \delta}{16 c^2} \leq \int_{\T^n} \Big( p(\rho_1) - p(\ov{\rho}) \Big) \dx \leq 0
	$$ 
	which follow from \eqref{eq:est-shift-pressure} and the fact that $\rho\mapsto p(\rho)$ is non-decreasing.
	
	Plugging this into \eqref{eq:lambda-better-defn} we end up with
	$$
		\ov{\lambda}(t) -  \frac{1}{c^2} \exp( - ct ) \leq \frac{\delta}{8 n c^2} + \left( \frac{2}{n} \, \frac{1}{\gamma-1} - 1 \right) \frac{(\gamma-1) \delta}{16 c^2} \leq \frac{\delta}{8 n c^2} + \frac{\delta}{8 n c^2} = \frac{\delta}{4 n c^2},
	$$	
	and 
	$$
		\ov{\lambda}(t) -  \frac{1}{c^2} \exp( - ct ) \geq \frac{\delta}{8 n c^2} - \left( \frac{2}{n} \, \frac{1}{\gamma-1} - 1 \right) \frac{(\gamma-1) \delta}{16 c^2} = \frac{\delta}{8 n c^2} - \frac{\delta}{8 n c^2} + \frac{(\gamma-1) \delta}{16 c^2} = \frac{(\gamma-1) \delta}{16 c^2}. 
	$$
	
	Moreover, we obtain from \eqref{eq:bound-delta} and \eqref{eq:bounds-c-ep}
	$$
		\frac{\delta}{4 n c^2} + \frac{1}{c^2} \exp( - ct ) \leq \frac{\ep^2}{288 n c^6} + \frac{1}{c^2} \leq \frac{\ep^2}{288 n c} + \frac{\ep^2}{288 n c} = \frac{\ep^2}{144 n c}. 
	$$
	
	Finally, we infer from \eqref{eq:lambda-better-defn}, \eqref{eq:est-molli-pressurederivative} and \eqref{eq:est-shift-pressurederivative}
	\begin{align*}
		&\left| \partial_t \left(\ov{\lambda}(t) -\frac{1}{c^2} \exp(-ct) \right) \right| \\
		&= \Big( \tfrac{2}{n} \, \tfrac{1}{\gamma-1} - 1 \Big) \left| \int_{\T^n} \partial_t \Big( p(\rho_\ts) \ast \phi_{\sigma} - p(\rho_1) \Big) \dx + \int_{\T^n} \partial_t \Big( p(\rho_1) - p(\ov{\rho}) \Big) \dx \right| \\
		& \leq \frac{2}{n} \, \frac{1}{\gamma-1} \Big\| p(\rho_\ts) \ast \phi_{\sigma} - p(\rho_1) \Big\|_{C^1\big(\big[0,\half T_\ts\big]\times\T^n\big)} \\
		&\qquad + \frac{2}{n} \, \frac{1}{\gamma-1} \Big\| p'(\rho_1) - p'(\rho_1 + r) \Big\|_{C^0\big(\big[0,\half T_\ts\big]\times\T^n\big)} \|\rho_1\|_{C^1\big(\big[0,\half T_\ts\big]\times\T^n\big)} \\
		&\leq \frac{2}{n} \, \frac{1}{\gamma-1} \frac{(\gamma - 1)\delta}{16 c} + \frac{2}{n} \, \frac{1}{\gamma-1} \frac{(\gamma-1) \delta}{16 c  \left( \|\rho_1\|_{C^1\big(\big[0,\half T_\ts\big]\times\T^n\big)} + 1 \right)} \|\rho_1\|_{C^1\big(\big[0,\half T_\ts\big]\times\T^n\big)} \\
		&\leq \frac{\delta}{8 n c} + \frac{\delta}{8 n c} \ = \ \frac{\delta}{4 n c}.
	\end{align*}
\end{proof}

With the help of Lemma~\ref{lemma:estimate-lambda-ov} we may now deduce that
\begin{equation} \label{eq:subsol1-q-in-C}
	\ov{q}(t,\vx)\in \sC_q \qquad \text{ for all }(t,\vx)\in \big[0,\half T_\ts\big]\times \T^n.
\end{equation}
Indeed we find using Lemma~\ref{lemma:estimate-lambda-ov} and \eqref{eq:bounds-c-ep}
$$
	\ov{q}(t,\vx) = (q_\ts \ast \phi_\sigma)(t,\vx) + \ov{\lambda}(t) \leq \frac{c^3}{n} + p(c) + \frac{\ep^2}{144 n c} \leq \frac{2c^3}{n} + p(c).
$$

Next, we define for $(t,\vx)\in \big[0,\half T_\ts\big]\times \T^n$ the following symmetric matrices
\begin{align*} 
	\ov{\mM}_1(t,\vx) &:= \frac{\ov{\vm}\otimes\ov{\vm}}{\rho_1} - \frac{\ov{\vm}\otimes\ov{\vm}}{\ov{\rho}} , \\
	\ov{\mM}_2(t,\vx) &:= \big( p(\ov{\rho}) - p(\rho_1) \big) \id, \\
	\ov{\mM}_3(t,\vx) &:= \left(\frac{\vm_\ts \otimes \vm_\ts}{\rho_\ts} + p(\rho_\ts)\id \right)\ast \phi_{\sigma} - \left( \frac{\ov{\vm}\otimes\ov{\vm}}{\rho_1} + p(\rho_1)\id \right). 
\end{align*}
For those matrices, we have the following bounds.

\begin{lemma} \label{lemma:estimate-matrices-ov}
	For any $(t,\vx)\in \big[0,\half T_\ts\big]\times \T^n$ the following estimates hold:
	\begin{align*} 
		0 &\leq \ov{\mM}_1 \leq \frac{\ep^2}{144 n c} \id , &
		0 &\leq \ov{\mM}_2 \leq \frac{(\gamma-1) \delta}{16c^2} \id, &
		0 &\leq \ov{\mM}_3 \leq \frac{\ep^2}{144 nc} \id . 
	\end{align*}
\end{lemma}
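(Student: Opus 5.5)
We treat the three matrices separately; each bound follows from an earlier estimate or a convexity argument.

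\emph{The matrix $\ov{\mM}_1$.} We have $\ov{\mM}_1 = \big(\frac{1}{\rho_1} - \frac{1}{\ov{\rho}}\big)\, \ov{\vm}\otimes\ov{\vm}$. Since $\ov{\rho}=\rho_1+r>\rho_1>0$, the scalar prefactor is nonnegative. The rank-one matrix $\ov{\vm}\otimes\ov{\vm}$ is positive semidefinite, so $\ov{\mM}_1\geq 0$. For the upper bound, note that $\ov{\vm}\otimes\ov{\vm}\leq |\ov{\vm}|^2\id\leq c^2\id$ by \eqref{eq:subsol1-m-in-C}. Also $\rho_1\in\sC_\rho$, because $\rho_1$ is a mollification of $\rho_\ts\in\sC_\rho$ and $\sC_\rho$ is convex. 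Therefore \eqref{eq:est-shift-density} gives $\frac1{\rho_1}-\frac1{\rho_1+r}\leq \frac{\delta c}{4n}$, and hence
$$
\ov{\mM}_1\leq \frac{\delta c^3}{4n}\id .
$$
By \eqref{eq:bound-delta} this is at most $\frac{\ep^2}{288 n c}\id$, which is smaller than the claimed $\frac{\ep^2}{144nc}\id$.

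\emph{The matrix $\ov{\mM}_2$.} Since $p$ is nondecreasing and $\ov{\rho}\geq\rho_1$, we get $\ov{\mM}_2\geq0$. The upper bound is exactly \eqref{eq:est-shift-pressure} applied with $\rho=\rho_1\in\sC_\rho$.

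\emph{The matrix $\ov{\mM}_3$.} The upper bound is precisely \eqref{eq:est-molli-matrix}. The lower bound $\ov{\mM}_3\geq 0$ is the genuine content and comes from Jensen's inequality. Fix $\xi\in\R^n$ and consider the map $(\rho,\vm)\mapsto \frac{(\vm\cdot\xi)^2}{\rho}+p(\rho)|\xi|^2$. It is jointly convex on $\R^+\times\R^n$: the perspective function $(\rho,\vm)\mapsto(\vm\cdot\xi)^2/\rho$ is convex, and $p$ is convex for $\gamma>1$. The mollifier $\phi_\sigma$ is a probability density, so Jensen's inequality (Lemma~\ref{lemma:jensen-real-valued}, or its straightforward vector version) gives
$$
\Big(\frac{(\vm_\ts\cdot\xi)^2}{\rho_\ts}+p(\rho_\ts)|\xi|^2\Big)\ast\phi_\sigma \;\geq\; \frac{(\ov{\vm}\cdot\xi)^2}{\rho_1}+p(\rho_1)|\xi|^2 .
$$
This says exactly $\xi\cdot\ov{\mM}_3\,\xi\geq0$ for every $\xi$, i.e.\ $\ov{\mM}_3\geq 0$.

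The only point needing care is this joint-convexity/Jensen step. If Lemma~\ref{lemma:jensen-real-valued} is only stated for functions of one real variable, I would justify the vector version directly. One option is the convexity of $\vm\mapsto|\vm|^2/\rho$ via its epigraph representation. Another is Cauchy--Schwarz: $\big(\int \vm\cdot\xi\,\phi\big)^2\leq \int\frac{(\vm\cdot\xi)^2}{\rho}\phi\int\rho\,\phi$, which yields the kinetic part directly; the pressure part then follows from scalar Jensen.
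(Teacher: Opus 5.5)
Your proposal is correct and follows the paper's proof essentially step by step. The bounds for $\ov{\mM}_1$ and $\ov{\mM}_2$ use \eqref{eq:est-shift-density}, \eqref{eq:bound-delta}, \eqref{eq:est-shift-pressure} and the monotonicity of $p$, and the upper bound for $\ov{\mM}_3$ is \eqref{eq:est-molli-matrix}, exactly as in the paper. For $\ov{\mM}_3\geq 0$ the paper cites the matrix-valued Jensen inequality (Lemma~\ref{lemma:jensen-matrix-valued}) together with the convexity of $(\rho,\vm)\mapsto \frac{\vm\otimes\vm}{\rho}+p(\rho)\id$ (Lemma~\ref{lemma:map-convex}). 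Your testing against $\xi$ followed by scalar Jensen is just the scalarized form of that same argument. Note also that Lemma~\ref{lemma:jensen-real-valued} is already stated for convex $S\subset\R^m$, so no separate vector version is needed.
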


\begin{proof} 
	We begin with the estimate concerning $\ov{\mM}_1$. From \eqref{eq:est-shift-density} and \eqref{eq:bound-delta} we obtain 
	$$
		\ov{\mM}_1 = \ov{\vm} \otimes \ov{\vm} \left( \frac{1}{\rho_1} - \frac{1}{\rho_1+r} \right) \leq c^2 \frac{\delta c}{4 n} \id \leq \frac{\ep^2}{288 n c} \id \leq \frac{\ep^2}{144 n c} \id. 
	$$
	As $\ov{\vm}\otimes \ov{\vm}$ is positive semi-definite\footnote{It is simple to show that $|\vm|^2$ and $0$ are the only eigenvalues of the matrix $\ov{\vm}\otimes \ov{\vm}$.} and $\frac{1}{\rho_1} - \frac{1}{\rho_1 + r} \geq 0$, we infer $\ov{\mM}_1 \geq 0$.
	
	Next, we look at $\ov{\mM}_2$. As $\rho\mapsto p(\rho)$ is a non-decreasing function, we have $p(\rho_1+r) - p(\rho_1)\geq 0$ and hence $\ov{\mM}_2\geq 0$. The upper bound $\ov{\mM}_2 \leq  \frac{(\gamma-1) \delta}{16c^2} \id$ follows immediately from \eqref{eq:est-shift-pressure}. 
	
	Finally, we prove the estimate regarding $\ov{\mM}_3$. Jensen's inequality (see Lemma~\ref{lemma:jensen-matrix-valued}) and Lemma~\ref{lemma:map-convex} yield $\ov{\mM}_3 \geq 0$. The remaining bound $\ov{\mM}_3 \leq \frac{\ep^2}{144 nc} \id$ comes from \eqref{eq:est-molli-matrix}.
\end{proof}

Using Lemmas~\ref{lemma:estimate-lambda-ov} and \ref{lemma:estimate-matrices-ov} we may now estimate for arbitrary $(t,\vx)\in \big[0,\half T_\ts\big]\times \T^n$ 
\begin{align} 
	&\frac{\ov{\vm}\otimes\ov{\vm}}{\ov{\rho}} + p(\ov{\rho})\id - \ov{\mU} - \ov{q}\id \notag \\
	&= \frac{\ov{\vm}\otimes\ov{\vm}}{\ov{\rho}} + p(\ov{\rho})\id - \left(\frac{\vm_\ts \otimes \vm_\ts}{\rho_\ts} + p(\rho_\ts)\id \right)\ast \phi_{\sigma} - \ov{\lambda}(t)\id \notag \\
	&= - \ov{\mM}_1 + \ov{\mM}_2 - \ov{\mM}_3 - \ov{\lambda}(t) \id \notag \\
	&\geq - \frac{\ep^2}{144 n c} \id - \frac{\ep^2}{144 n c} \id - \frac{\ep^2}{144 n c} \id \ =\  - \frac{\ep^2}{48 n c} \id , \label{eq:matrix-lower-bound}
\end{align} 
as well as
\begin{align} 
	&\frac{\ov{\vm}\otimes\ov{\vm}}{\ov{\rho}} + p(\ov{\rho})\id - \ov{\mU} - \ov{q}\id \notag \\
	&= -\ov{\mM}_1 + \ov{\mM}_2 - \ov{\mM}_3 - \ov{\lambda}(t) \id \notag \\
	&\leq \frac{(\gamma-1) \delta}{16c^2} \id - \frac{(\gamma-1) \delta}{16c^2} \id - \frac{1}{c^2} \exp( - ct ) \id\ \leq \ - \frac{1}{c^2} \exp( - cT_\ts ) \id. \label{eq:matrix-upper-bound}
\end{align}

\subsubsection{Step 5: Definition of $\ov{\vF}$} \label{subsubsec:moddata-s5}

Now, we turn our attention towards the definition of $\ov{\vF}$. A straightforward computation yields
\begin{align*} 
	&\int_{\T^n} \partial_t \left( \frac{n}{2} \left(\ov{q} - \frac{1}{c^2} \exp(-ct) - p(\ov{\rho})\right) + P(\ov{\rho}) \right) \dx \\
	&= \partial_t \int_{\T^n} \left( \frac{n}{2} \left(q_\ts \ast \phi_{\sigma} + \ov{\lambda}(t) - \frac{1}{c^2} \exp(-ct) - p(\ov{\rho})\right) + P(\ov{\rho}) \right) \dx \\
	&= \partial_t \bigg[ \int_{\T^n} \Big( \tfrac{n}{2} \big(q_\ts \ast \phi_{\sigma} - p(\ov{\rho})\big) + P(\ov{\rho}) \Big) \dx + \frac{n}{2} \left(\ov{\lambda}(t) - \frac{1}{c^2} \exp(-ct)\right) \bigg] \\
	&= \partial_t \bigg[\int_{\T^n} \Big( \tfrac{n}{2} \big(q_\ts \ast \phi_{\sigma} - p(\ov{\rho})\big) + P(\ov{\rho}) \Big) \dx - \frac{n}{2} \int_{\T^n} \Big( q_\ts \ast \phi_{\sigma} - p(\ov{\rho}) + \tfrac{2}{n} P(\ov{\rho}) \Big) \dx \\
	&\qquad + \int_{\T^n}  \left( \frac{|\vm_0|^2}{2\rho_0} + P(\rho_0) \right) \dx + \frac{n}{2} \, \frac{\delta}{8 n c^2} \bigg] \ =\ 0,
\end{align*} 
and, consequently, by Lemma~\ref{lemma:poisson-mf} there exists $v$ such that 
\begin{equation} \label{eq:subsol1-poisson-energy}
	\Lap v = \partial_t \left( \frac{n}{2} \left(\ov{q} - \frac{1}{c^2} \exp(- ct) - p(\ov{\rho})\right) + P(\ov{\rho}) \right).
\end{equation}
As the right-hand side of \eqref{eq:subsol1-poisson-energy} is smooth (see \eqref{eq:subsol1-reg-rho}, \eqref{eq:subsol1-reg-Uq}), we have $v\in C^2\big(\big[0,\half T_\ts\big]\times \T^n\big)$. Finally, we set 
$$
	\ov{\vF} := -\Grad v + \Leray \vF_\ts + \frac{n}{2c^2} \exp(-ct) \frac{\vm_\ts}{\rho_\ts} .
$$
Here $\Leray$ is the projection onto divergence-free vector fields, see Lemma~\ref{lemma:helmholtz} for details, and 
\begin{equation} \label{eq:identity-Fs}
	\vF_\ts := \left(\frac{|\vm_\ts|^2}{2\rho_\ts} + P(\rho_\ts) + p(\rho_\ts)\right) \frac{\vm_\ts}{\rho_\ts} = \frac{\frac{n}{2} \big(q_\ts - p(\rho_\ts)\big) + P(\rho_\ts) + p(\rho_\ts)}{\rho_\ts} \vm_\ts 
\end{equation}
is the energy flux of the strong solution $(\rho_\ts,\vm_\ts)$. Note that $\vF_\ts\in C^1\big(\big[0,\half T_\ts\big];C^3(\T^n;\R^n)\big)$ and hence $\vF_\ts\in C^1\big(\big[0,\half T_\ts\big];H^3(\T^n;\R^n)\big)$. According to Lemma~\ref{lemma:helmholtz}, this implies $\Leray\vF_\ts\in C^1\big(\big[0,\half T_\ts\big];H^3(\T^n;\R^n)\big)$, and Sobolev embedding yields $\Leray\vF_\ts\in C^1\big(\big[0,\half T_\ts\big]\times\T^n;\R^n\big)$. Keeping in mind that $(\rho_\ts,\vm_\ts)\in C^1\big(\big[0,\half T_\ts\big];C^3(\T^n;\R^+\times \R^n)\big)$, we therefore find
\begin{equation} \label{eq:subsol1-reg-F}
	\ov{\vF} \in C^1\big(\big[0,\half T_\ts\big]\times \T^n; \R^n\big).
\end{equation}
Moreover, due to \eqref{eq:subsol1-poisson-energy} and \eqref{eq:bounds-c-strongsol} the inequality 
\begin{equation} \label{eq:subsol1-energy} 
	\partial_t \Big( \tfrac{n}{2} \big(\ov{q} - p(\ov{\rho}) \big) + P(\ov{\rho})\Big) + \Div \ov{\vF} = \frac{n}{2c^2} \exp(-ct) \left[- c + \Div\left(\frac{\vm_\ts}{\rho_\ts}\right) \right] \leq 0 
\end{equation}
is satisfied pointwise in $\big[0,\half T_\ts\big]\times \T^n$.

Next, we check the condition of Prop.~\ref{prop:U}~\ref{item:U-2}.

\begin{lemma} \label{lemma:estimate-F} 
	For any $(t,\vx)\in \big[0,\half T_\ts\big]\times \T^n$ there holds
	$$
		\left| \ov{\vF} -  \frac{\frac{n}{2} \big(\ov{q} - p(\ov{\rho})\big) + P(\ov{\rho}) + p(\ov{\rho})}{\ov{\rho}} \ov{\vm} \right| < \delta.
	$$
\end{lemma}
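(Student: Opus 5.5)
The plan is to compare $\ov{\vF}$ with the exact energy flux $\vF_\ts$ of the strong solution, cf.~\eqref{eq:identity-Fs}, and to control every error by the smallness conditions imposed in Steps 2--4. First I split $\vF_\ts$ with the Helmholtz decomposition of Lemma~\ref{lemma:helmholtz} as $\vF_\ts = \Leray\vF_\ts + \Grad w$, where $\Lap w = \Div \vF_\ts$. Since $(\rho_\ts,\vm_\ts)$ is a strong solution, it satisfies the energy equality, so $\Div\vF_\ts = -\partial_t E_\ts$ with $E_\ts := \tfrac{n}{2}(q_\ts - p(\rho_\ts)) + P(\rho_\ts)$. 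This gives
\[
	\ov{\vF} - \vF_\ts = -\Grad(v+w) + \frac{n}{2c^2}\exp(-ct)\frac{\vm_\ts}{\rho_\ts}.
\]
Together with \eqref{eq:subsol1-poisson-energy}, the potential $v+w$ solves
\[
	\Lap(v+w) = \partial_t\Big[\Big(\tfrac{n}{2}\big(q_\ts\ast\phi_\sigma - p(\rho_1)\big) + P(\rho_1)\Big) - E_\ts\Big] + \partial_t\Big[\tfrac{n}{2}\big(p(\rho_1)-p(\ov{\rho})\big) + P(\ov{\rho}) - P(\rho_1)\Big] + \tfrac{n}{2}\,\partial_t\Big(\ov{\lambda}(t) - \tfrac{1}{c^2}\exp(-ct)\Big).
\]

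The main step, and the expected obstacle, is bounding $|\Grad(v+w)|$ pointwise. For this I invoke the elliptic estimate \eqref{eq:poisson-sobolev} of Cor.~\ref{cor:poisson-mf-sobolev}, whose implicit constant was chosen to be $c$. The right-hand side of the Poisson equation is then estimated term by term:
\begin{itemize}
	\item the first term by \eqref{eq:est-molli-energy}, which gives $\delta/(8c)$;
	\item the second term by the chain rule, using \eqref{eq:est-shift-pressurederivative} together with $P'=\tfrac{\gamma}{\gamma-1}\cdot\tfrac{p}{\rho}\cdot\ldots$, or more simply $P' = \tfrac{1}{\gamma-1}p'$, so that the factor $(\gamma-1)$ built into that bound cancels;
	\item the last term by Lemma~\ref{lemma:estimate-lambda-ov}, which gives $\tfrac{n}{2}\cdot\tfrac{\delta}{4nc}$.
\end{itemize}
Multiplying by $c$, I aim for $|\Grad(v+w)|\le \tfrac{\delta}{8}+\tfrac{\delta}{8}+\tfrac{\delta}{8}$ or similar. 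The delicate point is whether \eqref{eq:poisson-sobolev} controls $\Grad(v+w)$ in sup norm by the quantity the $C^1$ bounds actually give. That is exactly why the hypotheses were stated in $C^1$ norms, and I would check that the regularity in the corollary matches.

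Next I compare the algebraic flux. Write $G(\rho,q):=\frac{\frac{n}{2}(q-p(\rho))+P(\rho)+p(\rho)}{\rho}$, and recall $\ov{q}=q_\ts\ast\phi_\sigma+\ov\lambda$. Then
\[
	\vF_\ts - G(\ov\rho,\ov q)\ov{\vm} = \big[G(\rho_\ts,q_\ts)\vm_\ts - G(\rho_1,q_\ts\ast\phi_\sigma)\ov{\vm}\big] + \big[G(\rho_1,q_\ts\ast\phi_\sigma) - G(\ov\rho,q_\ts\ast\phi_\sigma)\big]\ov{\vm} - \tfrac{n}{2}\ov\lambda\,\frac{\ov{\vm}}{\ov\rho}.
\]
The first bracket is at most $\delta/8$ by \eqref{eq:est-molli-energyflux}. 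The second is at most $\tfrac{\delta}{8c}\cdot c$ by \eqref{eq:est-shift-energyflux} and $|\ov{\vm}|\le c$; here $q_\ts\ast\phi_\sigma\in\sC_q$ by convexity.

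The last term combines with $\frac{n}{2c^2}e^{-ct}\frac{\vm_\ts}{\rho_\ts}$. Splitting it as
\[
	\tfrac{n}{2}\Big(\ov\lambda - \tfrac{1}{c^2}e^{-ct}\Big)\frac{\ov{\vm}}{\ov\rho} + \tfrac{n}{2c^2}e^{-ct}\Big(\frac{\ov{\vm}}{\ov\rho} - \frac{\vm_\ts}{\rho_\ts}\Big),
\]
the first piece is at most $\tfrac{n}{2}\cdot\tfrac{\delta}{4nc^2}\cdot c^2=\delta/8$ by Lemma~\ref{lemma:estimate-lambda-ov} and $|\ov{\vm}/\ov\rho|\le c^2$. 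The second piece uses \eqref{eq:est-molli-velocity} and \eqref{eq:est-shift-density} with $|\ov{\vm}|\le c$, giving at most $\tfrac{n}{2c^2}\big(\tfrac{\delta c^2}{4n}+c\cdot\tfrac{\delta c}{4n}\big)=\delta/4$.

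Summing all contributions gives a bound strictly below $\delta$. If the constants do not add up exactly, I would adjust the fractions of $\delta$ allotted to the Poisson part, since that is the only step where the constant is not explicit.
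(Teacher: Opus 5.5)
\paragraph{Comparison with the paper.} Your argument is the paper's own proof, lightly reorganised. Merging $v$ and $w$ into a single Poisson problem replaces the paper's identification $v_\ts=w_\ts$ by uniqueness. Your three Poisson contributions are its $B_1,B_2,B_3$, and your algebraic pieces are its $A_2,A_3,A_4$ (your $\delta/8+\delta/4$ is its bound $A_4\le 3\delta/8$). The bookkeeping therefore closes exactly, with the strict inequality coming from $B_2$, so no reallocation of $\delta$ is needed. For $B_2$, use $P=\frac{1}{\gamma-1}p$ to write the term as $(\frac{1}{\gamma-1}-\frac n2)(p(\ov{\rho})-p(\rho_1))$, whose prefactor lies in $[0,\frac{1}{\gamma-1}]$. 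You also need \eqref{eq:est-shift-pressure} in addition to \eqref{eq:est-shift-pressurederivative}.

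\paragraph{The gap.} The point you deferred is a genuine gap, and stating the hypotheses in $C^1$ does not settle it. Let $f=\partial_t g$ be the right-hand side of your Poisson problem. Then \eqref{eq:poisson-sobolev} gives $|\Grad(v+w)|\le c\|f\|_{C^1(\T^n)}$, which involves $\sup|\Grad_{\vx}\partial_t g|$, a mixed second derivative. But \eqref{eq:est-molli-energy}, \eqref{eq:est-shift-pressure} and \eqref{eq:est-shift-pressurederivative} only control $g$ in $C^1$ of space-time. The inequality $\|\partial_t g\|_{C^0_tC^1_{\vx}}\le\|g\|_{C^1_{t,\vx}}$ is false in general: take $g=\eta\sin(Nt)\sin(2\pi N x_1)$ and let $N\to\infty$. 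The paper's proof passes through exactly this inequality without justification, so you cannot simply import that step.

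\paragraph{Two repairs.}
\begin{enumerate}
\item Replace Cor.~\ref{cor:poisson-mf-sobolev} by the sharper bound $\|\Grad v\|_{C^0(\T^n)}\lesssim\|f\|_{L^\infty(\T^n)}$ for mean-zero $f$. This holds because the gradient of the periodic Green's function is $O(|\vx|^{1-n})$ near the origin, hence integrable; alternatively use $W^{2,p}$ estimates with $p>n$ and Morrey's embedding. Enlarge $c$ so that it dominates this constant, which is harmless for the other constraints on $c$. Then only $\sup|\partial_t g|\le\|g\|_{C^1_{t,\vx}}$ is needed, and all your numbers stand.
\item Add smallness requirements on $\partial_t\Grad_{\vx}$ of the mollification and shift errors. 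These can be met by shrinking $\sigma$ and then $r$, because $\rho_\ts,\vm_\ts\in C^1([-T_\ts,T_\ts];C^3(\T^n))$, $\rho_1$ is smooth with values in $\sC_\rho$, and $p''$ is uniformly continuous on compact subsets of $\R^+$.
\end{enumerate}
The term $B_3$ is unaffected either way, since $\partial_t(\ov{\lambda}-c^{-2}e^{-ct})$ does not depend on $\vx$.
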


\begin{proof}
	Using \eqref{eq:identity-Fs}, we split the left-hand side of the desired estimate as 
	\begin{align*} 
		&\left| \ov{\vF} - \frac{\frac{n}{2} \big(\ov{q} - p(\ov{\rho})\big) + P(\ov{\rho}) + p(\ov{\rho})}{\ov{\rho}} \ov{\vm} \right| \\
		&\leq \left|\ov{\vF} - \frac{n}{2c^2} \exp(-ct) \frac{\vm_\ts}{\rho_\ts} - \vF_\ts\right| \\
		&\qquad + \left|  \frac{\frac{n}{2} \big(q_\ts - p(\rho_\ts)\big) + P(\rho_\ts) + p(\rho_\ts)}{\rho_\ts} \vm_\ts - \frac{\frac{n}{2} \big(q_\ts \ast \phi_{\sigma} - p(\rho_1)\big) + P(\rho_1) + p(\rho_1)}{\rho_1} \ov{\vm} \right| \\
		&\qquad + \left| \frac{\frac{n}{2} \big(q_\ts \ast \phi_{\sigma} - p(\rho_1)\big) + P(\rho_1) + p(\rho_1)}{\rho_1} \ov{\vm} - \frac{\frac{n}{2} \big(q_\ts \ast \phi_{\sigma} - p(\ov{\rho})\big) + P(\ov{\rho}) + p(\ov{\rho})}{\ov{\rho}} \ov{\vm} \right| \\
		&\qquad + \left| \frac{n}{2c^2} \exp(-ct) \frac{\vm_\ts}{\rho_\ts} - \frac{n}{2} \ov{\lambda}(t) \frac{\ov{\vm}}{\ov{\rho}} \right| \ =: \ A_1 + A_2 + A_3 + A_4 
	\end{align*}
	and estimate the four terms $A_i$ ($i=1,2,3,4$) on the right-hand side separately. 
	
	We begin with the most delicate term $A_1$. Let $v_\ts$ denote the solution of the Poisson equation 
	\begin{equation} \label{eq:strong-poisson-energy}
		\Lap v_\ts = \partial_t \Big( \tfrac{n}{2} \big(q_\ts - p(\rho_\ts)\big) + P(\rho_\ts) \Big) = \partial_t \left( \frac{|\vm_\ts|^2}{2\rho_\ts} + P(\rho_\ts)\right).
	\end{equation}
	Note that the right-hand side $\partial_t \left( \frac{|\vm_\ts|^2}{2\rho_\ts} + P(\rho_\ts)\right)\in C^0\big(\big[0,\half T_\ts\big];H^3(\T^n;\R^n)\big)$, as well as 
	$$
		\partial_t \int_{\T^n}  \left( \frac{|\vm_\ts|^2}{2\rho_\ts} + P(\rho_\ts)\right) \dx = 0,
	$$
	due to the fact that strong solutions conserve the total energy. Consequently, Lemma~\ref{lemma:poisson-mf} indeed yields existence of a solution $v_\ts\in C^0\big(\big[0,\half T_\ts\big];H^5(\T^n;\R^n)\big)$ of \eqref{eq:strong-poisson-energy}.
	
	Now let $\vF_\ts = -\Grad w_\ts + \Leray \vF_\ts$ be the Helmholtz decomposition of $\vF_\ts$, see Lemma~\ref{lemma:helmholtz}. As strong solutions satisfy the energy inequality \eqref{eq:euler-energy} as an equation, we find
	$$
		\Lap w_\ts = - \Div \vF_\ts = \partial_t \left( \frac{|\vm_\ts|^2}{2\rho_\ts} + P(\rho_\ts)\right).
	$$ 
	In particular, $v_\ts= w_\ts$ by uniqueness of solutions to the Poisson equation.
	
	Using Cor.~\ref{cor:poisson-mf-sobolev}  and keeping in mind that we have chosen $c$ sufficiently large, so that it can be used as the implicit constant in \eqref{eq:poisson-sobolev}, we may estimate
	\begin{align*}
		&\left\|\ov{\vF} - \frac{n}{2c^2} \exp(-ct) \frac{\vm_\ts}{\rho_\ts} - \vF_\ts\right\|_{C^0\big(\big[0,\half T_\ts\big]\times \T^n\big)} \\ 
		&= \Big\|-\Grad v + \Leray \vF_\ts + \Grad v_\ts - \Leray \vF_\ts\|_{C^0\big(\big[0,\half T_\ts\big]\times \T^n\big)} \\
		&= \Big\| \Grad( v_\ts - v )\Big\|_{C^0\big(\big[0,\half T_\ts\big]\times \T^n\big)} \\
		&\leq c \left\|\partial_t \Big( \tfrac{n}{2} \big(q_\ts - p(\rho_\ts)\big) + P(\rho_\ts) \Big) - \partial_t \left( \frac{n}{2} \left(\ov{q} - \frac{1}{c^2} \exp(- ct) - p(\ov{\rho})\right) + P(\ov{\rho}) \right) \right\|_{C^0\big(\big[0,\half T_\ts\big];C^1(\T^n)\big)} \\
		&\leq c \left\| \Big( \tfrac{n}{2} \big(q_\ts - p(\rho_\ts)\big) + P(\rho_\ts) \Big) - \Big( \tfrac{n}{2} \big(q_\ts \ast \phi_{\sigma} - p(\rho_1)\big) + P(\rho_1) \Big) \right\|_{C^1\big(\big[0,\half T_\ts\big]\times\T^n\big)} \\
		&\qquad + c \left\| \Big( - \tfrac{n}{2} p(\rho_1) + P(\rho_1) \Big) - \Big( - \tfrac{n}{2} p(\rho_1+r) + P(\rho_1+r) \Big) \right\|_{C^1\big(\big[0,\half T_\ts\big]\times\T^n\big)} \\
		&\qquad + \frac{nc}{2} \left\| \partial_t \left( \ov{\lambda}(t) - \frac{1}{c^2} \exp(-ct) \right) \right\|_{C^0\big(\big[0,\half T_\ts\big]\big)} \ =:\ B_1 + B_2+ B_3.
	\end{align*}
	
	From \eqref{eq:est-molli-energy}, we immediately see that $B_1\leq \frac{\delta}{8}$. Using the fact that $\gamma-1 \leq \frac{2}{n}$, as well as \eqref{eq:est-shift-pressure} and \eqref{eq:est-shift-pressurederivative}, we find
	\begin{align*} 
		B_2 &= c \left| \tfrac{n}{2}-\tfrac{1}{\gamma-1}\right| \Big\| p(\rho_1+r) -  p(\rho_1)\Big\|_{C^1\big(\big[0,\half T_\ts\big]\times\T^n\big)}  \\
		&\leq c \left( \tfrac{1}{\gamma-1} - \tfrac{n}{2}\right) \bigg(\Big\| p(\rho_1+r) -  p(\rho_1)\Big\|_{C^0\big(\big[0,\half T_\ts\big]\times\T^n\big)} + \\
		&\qquad\qquad\qquad\qquad + \Big\| p'(\rho_1+r) -  p'(\rho_1)\Big\|_{C^0\big(\big[0,\half T_\ts\big]\times\T^n\big)} \|\rho_1\|_{C^1\big(\big[0,\half T_\ts\big]\times\T^n\big)} \bigg) \\
		&\leq \frac{\delta}{16} + \frac{\delta \|\rho_1\|_{C^1\big(\big[0,\half T_\ts\big]\times\T^n\big)}}{16 \left(\|\rho_1\|_{C^1\big(\big[0,\half T_\ts\big]\times\T^n\big)} + 1\right)} \ <\ \frac{\delta}{8}.
	\end{align*} 
	From Lemma~\ref{lemma:estimate-lambda-ov} we obtain $B_3\leq \frac{\delta}{8}$. Thus, we have $A_1 \leq B_1+B_2+B_3 < \frac{3\delta}{8}$. 
	
	We immediately infer $A_2\leq\frac{\delta}{8}$ and $A_3\leq\frac{\delta}{8}$ from \eqref{eq:est-molli-energyflux} and \eqref{eq:est-shift-energyflux}, respectively. 
	
	Finally, we estimate $A_4$. There holds
	\begin{align*}
		A_4 &= \left| \frac{n}{2c^2} \exp(-ct) \frac{\vm_\ts}{\rho_\ts} - \frac{n}{2} \ov{\lambda}(t) \frac{\ov{\vm}}{\ov{\rho}} \right| \\
		&\leq \frac{n}{2c^2} \exp(-ct) \left| \frac{\vm_\ts}{\rho_\ts} - \frac{\ov{\vm}}{\rho_1} \right| +  \frac{n}{2c^2} \exp(-ct) \left| \frac{\ov{\vm}}{\rho_1} - \frac{\ov{\vm}}{\ov{\rho}} \right| + \frac{n}{2} \left| \frac{1}{c^2} \exp(-ct) - \ov{\lambda}(t) \right| \left| \frac{\ov{\vm}}{\ov{\rho}} \right| \\
		& \leq \frac{n}{2c^2} \, \frac{\delta c^2}{4n} +  \frac{n}{2c^2} \, \frac{\delta c}{4n} \,c +  \frac{n}{2} \,\frac{\delta}{4nc^2} \, c^2 \ = \ \frac{3\delta}{8}, 
	\end{align*}
	according to \eqref{eq:est-molli-velocity}, \eqref{eq:est-shift-density} and Lemma~\ref{lemma:estimate-lambda-ov}.
	
	Summing all terms, we find $A_1+A_2+A_3+A_4 < \delta$ as desired. 
\end{proof}

\subsubsection{Step 6: Adjusting the initial datum using Prop.~\ref{prop:ci-data}} \label{subsubsec:moddata-s6} 

In the preceding steps, we have constructed a tuple of functions 
\begin{equation} \label{eq:subsol1-reg}
	(\ov{\rho},\ov{\vm},\ov{\mU},\ov{q},\ov{\vF})\in C^1\big(\big[0,\half T_\ts\big]\times \T^n;\R^+ \times \phase\big),
\end{equation}
see \eqref{eq:subsol1-reg-rho}, \eqref{eq:subsol1-reg-m}, \eqref{eq:subsol1-reg-Uq} and \eqref{eq:subsol1-reg-F}. This tuple satisfies the linear system \eqref{eq:eulerlin-mass}-\eqref{eq:eulerlin-energy} for all $(t,\vx)\in \big(0,\half T_\ts\big)\times \T^n$ according to \eqref{eq:subsol1-mass}, \eqref{eq:subsol1-mom} and \eqref{eq:subsol1-energy}, respectively. 

Moreover, we have already seen that 
$$
	(\ov{\rho},\ov{\vm}, \ov{\mU}, \ov{q})(t,\vx)\in \sC_\rho \times \sC_\vm \times \sC_\mU \times \sC_q \qquad \text{ for all }(t,\vx)\in \big[0,\half T_\ts\big]\times \T^n,
$$
cf.~\eqref{eq:subsol1-rho-in-C}, \eqref{eq:subsol1-m-in-C}, \eqref{eq:subsol1-U-in-C} and \eqref{eq:subsol1-q-in-C}. From \eqref{eq:matrix-upper-bound} we obtain
\begin{equation*} 
	(\ov{\rho},\ov{\vm}, \ov{\mU}, \ov{q})(t,\vx)\in \sC \qquad \text{ for all }(t,\vx)\in \big[0,\half T_\ts\big]\times \T^n,
\end{equation*}
where $\sC$ was defined in \eqref{eq:defn-C}. Lemma~\ref{lemma:estimate-F} yields that 
$$
	(\ov{\rho},\ov{\vm}, \ov{\mU}, \ov{q},\ov{\vF})(t,\vx)\in \sB_\delta \qquad \text{ for all }(t,\vx)\in \big[0,\half T_\ts\big]\times \T^n,
$$
with $\sB_\delta$ defined in Prop.~\ref{prop:U}~\ref{item:U-2}. Consequently, we deduce from Prop.~\ref{prop:U}~\ref{item:U-2} that 
$$
	(\ov{\vm}, \ov{\mU}, \ov{q},\ov{\vF})(t,\vx) \in \sU_{\ov{\rho}(t,\vx),\ov{Q}(t,\vx)} \qquad \text{ for all }(t,\vx)\in \big[0,\half T_\ts\big]\times \T^n,
$$ 
where\footnote{Recall from Sect.~\ref{subsubsec:moddata-s1} that $\delta$ corresponds to the case where $\frac{\ep^2}{48 n c}$ plays the role of $\ep$ in Prop.~\ref{prop:U}~\ref{item:U-2}.} 
\begin{equation} \label{eq:subsol1-Q}
	\ov{Q}(t,\vx) = \ov{q}(t,\vx) + \frac{\ep^2}{48 n c} .
\end{equation}
It is obvious from \eqref{eq:subsol1-reg} that 
\begin{equation} \label{eq:subsol1-reg-Q}
	\ov{Q} \in C^1\big(\big[0,\half T_\ts\big]\times \T^n;\R^+ \big),
\end{equation}

Next, in order to apply Prop.~\ref{prop:ci-data}, we fix $T_0\in \big(0,\tfrac{1}{4}T_\ts\big)$ such that 
\begin{equation} \label{eq:est-timeshift-rm}
	\|\ov{\rho}(T_0,\cdot) - \ov{\rho}(0,\cdot)\|_{L^\infty(\T^n)} < \tfrac{1}{4}\ep \qquad \text{ and } \qquad \|\ov{\vm}(T_0,\cdot) - \ov{\vm}(0,\cdot)\|_{L^\infty(\T^n)} < \tfrac{1}{4}\ep,
\end{equation}
which is possible by continuity. Moreover, we choose $\beta>0$ such that 
\begin{equation} \label{eq:beta}
	0<T_0-\beta<T_0<T_0+\beta<\tfrac{1}{4}T_\ts.
\end{equation}
Finally, we may apply Prop.~\ref{prop:ci-data}, where $\frac{\ep^2}{48}$ plays the role of $\ep$, to obtain 
\begin{equation} \label{eq:subsol1-mod-reg}
	(\widetilde{\vm},\widetilde{\mU},\widetilde{q},\widetilde{\vF}) \in \Cweak\big(\big[0,\half T_\ts\big];L^2(\T^n;\phase)\big) \cap C^1\big(\big(\big(0,\half T_\ts\big)\setminus\{T_0\}\big)\times \T^n;\phase\big)
\end{equation} 
with properties~\ref{item:idiK-pde}-\ref{item:idiK-m-close} stated in Prop.~\ref{prop:ci-data}. In particular, the tuple $(\ov{\rho},\widetilde{\vm},\widetilde{\mU},\widetilde{q}, \widetilde{\vF})$ solves the equations and inequalities \eqref{eq:eulerlin-mass}-\eqref{eq:eulerlin-energy}, and there hold
\begin{align}
	(\widetilde{\vm},\widetilde{\mU},\widetilde{q}, \widetilde{\vF})(t,\vx) &\in \sU_{\ov{\rho}(t,\vx), \ov{Q}(t,\vx)} \qquad \text{ for all }t\in \big(0,\half T_\ts\big)\setminus\{T_0\}\text{ and all }\vx \in \T^n, \label{eq:mod-subsol-U} \\
	(\widetilde{\vm},\widetilde{\mU},\widetilde{q}, \widetilde{\vF})(T_0,\vx) &\in \sK_{\ov{\rho}(T_0,\vx), \ov{Q}(T_0,\vx)} \qquad \text{ for a.e. }\vx \in \T^n, \label{eq:mod-subsol-K} \\
	(\widetilde{\vm},\widetilde{\mU},\widetilde{q}, \widetilde{\vF})(t,\vx) &= (\ov{\vm},\ov{\mU},\ov{q}, \ov{\vF})(t,\vx) \qquad \text{ for all }t\in \big(\tfrac{1}{4} T_\ts,\half T_\ts\big)\text{ and all }\vx \in \T^n. \label{eq:mod-subsol-outside} 
\end{align}
Note that \eqref{eq:mod-subsol-outside} follows from Prop.~\ref{prop:ci-data}~\ref{item:idiK-outside} and the choice of $\beta$, see \eqref{eq:beta}.

From property~\ref{item:idiK-m-close} we infer
\begin{equation} \label{eq:101}
	\Big\|\widetilde{\vm}(T_0,\cdot) - \ov{\vm}(T_0,\cdot)\Big\|_{L^2(\T^n)}^2 \leq \big\|\widetilde{\vm}(T_0,\cdot)\big\|_{L^2(\T^n)}^2 - \big\|\ov{\vm}(T_0,\cdot)\big\|_{L^2(\T^n)}^2 + \frac{\ep^2}{48},
\end{equation}
and \eqref{eq:mod-subsol-K} yields\footnote{Recall the definition of $\sK_{\rho,Q}$ in \eqref{eq:K}.} with the help of \eqref{eq:subsol1-Q}
\begin{align}
	\big|\widetilde{\vm}(T_0,\vx)\big|^2 &= n \big( \widetilde{q}(T_0,\vx) - p(\ov{\rho}(T_0,\vx)) \big) \ov{\rho}(T_0,\vx) \notag \\
	&\leq n \big( \ov{Q}(T_0,\vx) - p(\ov{\rho}(T_0,\vx)) \big) \ov{\rho}(T_0,\vx) \notag \\
	&= n \left( \ov{q}(T_0,\vx) + \frac{\ep^2}{48 n c} - p(\ov{\rho}(T_0,\vx)) \right) \ov{\rho}(T_0,\vx) \qquad\text{ for a.e. }\vx\in \T^n . \label{eq:102}
\end{align}
Taking the trace in \eqref{eq:matrix-lower-bound}, we find 
\begin{equation} \label{eq:103}
	- \big|\ov{\vm}(T_0,\vx)\big|^2 \leq \left( \frac{\ep^2}{48 c} + n \Big( p(\ov{\rho}(T_0,\vx)) - \ov{q}(T_0,\vx) \Big) \right) \ov{\rho}(T_0,\vx) \qquad\text{ for all }\vx\in \T^n .
\end{equation}
Combining \eqref{eq:102} with \eqref{eq:103}, we obtain 
\begin{equation} \label{eq:104}
	\big|\widetilde{\vm}(T_0,\vx)\big|^2 - \big|\ov{\vm}(T_0,\vx)\big|^2 \leq \left( \frac{\ep^2}{48 c} + \frac{\ep^2}{48 c} \right) \ov{\rho}(T_0,\vx)\ \leq \ \frac{\ep^2}{24}  \qquad\text{ for a.e. }\vx\in \T^n . 
\end{equation}
Plugging \eqref{eq:104} into \eqref{eq:101}, we infer 
\begin{equation} \label{eq:est-data-m}
	\Big\|\widetilde{\vm}(T_0,\cdot) - \ov{\vm}(T_0,\cdot)\Big\|_{L^2(\T^n)}^2 \leq \frac{\ep^2}{24} + \frac{\ep^2}{48} \ = \ \frac{\ep^2}{16}. 
\end{equation}

Let us set 
\begin{equation} \label{eq:defn-mod-data}
	\rho_{0,\ep}:= \ov{\rho}(T_0,\cdot), \quad \text{ and } \quad \vm_{0,\ep} := \widetilde{\vm}(T_0,\cdot).
\end{equation}
From \eqref{eq:est-timeshift-rm}, the fact that $r< \tfrac{1}{4} \ep$, as well as \eqref{eq:est-molli-rm}, we see that 
\begin{align*}
	&\|\rho_{0,\ep} - \rho_0\|_{L^\infty(\T^n)} \\
	&\leq \| \ov{\rho}(T_0,\cdot) - \ov{\rho}(0,\cdot) \|_{L^\infty(\T^n)} + \| \ov{\rho}(0,\cdot) - \rho_1(0,\cdot) \|_{L^\infty(\T^n)} + \| \rho_1(0,\cdot) - \rho_0 \|_{L^\infty(\T^n)} \\
	& < \tfrac{1}{4} \ep + \tfrac{1}{4} \ep + \tfrac{1}{4} \ep \ <\ \ep.
\end{align*}
Similarly, we obtain from \eqref{eq:est-data-m}, \eqref{eq:est-timeshift-rm} and \eqref{eq:est-molli-rm}
\begin{align*} 
	&\|\vm_{0,\ep} - \vm_0\|_{L^2(\T^n)} \\
	&\leq \| \widetilde{\vm}(T_0,\cdot) - \ov{\vm}(T_0,\cdot) \|_{L^2(\T^n)} + \| \ov{\vm}(T_0,\cdot) - \ov{\vm}(0,\cdot) \|_{L^\infty(\T^n)} + \| \ov{\vm}(0,\cdot) - \vm_0 \|_{L^\infty(\T^n)} \\
	& < \tfrac{1}{4} \ep + \tfrac{1}{4} \ep + \tfrac{1}{4} \ep \ <\ \ep.
\end{align*}
as well as 
\begin{align*} 
	\|\vm_{0,\ep}\|_{L^\infty(\T^n)} &= \|\widetilde{\vm}(T_0,\cdot) \|_{L^\infty(\T^n)} \leq \|\ov{\vm}(T_0,\cdot) \|_{L^\infty(\T^n)} + \frac{\ep}{\sqrt{24}} \\
	&\leq \|\ov{\vm}(T_0,\cdot) - \ov{\vm}(0,\cdot) \|_{L^\infty(\T^n)} + \|\ov{\vm}(0,\cdot)- \vm_0 \|_{L^\infty(\T^n)} + \|\vm_0\|_{L^\infty(\T^n)} + \frac{\ep}{\sqrt{24}} \\
	& < \|\vm_0\|_{L^\infty(\T^n)} + \frac{\ep}{4} + \frac{\ep}{4} + \frac{\ep}{\sqrt{24}} \ <\ \|\vm_0\|_{L^\infty(\T^n)} + \ep ,
\end{align*}
where we used \eqref{eq:104} instead of \eqref{eq:est-data-m}.

\subsection{The modified initial datum is wild} \label{subsec:proof-wildness}

In order to prove Thm.~\ref{thm:density-2}, it remains to show that $(\rho_{0,\ep},\vm_{0,\ep})$ constructed in Sect.~\ref{subsec:proof-modification} above is a wild initial datum in the sense of Defn.~\ref{defn:wild-data}. To this end, we construct a subsolution
$$
	(\un{\rho}, \un{\vm},\un{\mU},\un{q},\un{\vF}) \in C^\infty([0,T+T_\ts]\times \T^n;\R^+ \times \phase)
$$
for a given final time $T>0$. We keep in mind that the values for $T_\ts , c, \delta, \sigma, r, T_0$ have been already chosen in Sect.~\ref{subsec:proof-modification} above. We will show in Sect.~\ref{subsubsec:infsol-s6} that the subsolution $(\ov{\rho}, \widetilde{\vm},\widetilde{\mU},\widetilde{q},\widetilde{\vF})$, which we found in Sect.~\ref{subsubsec:moddata-s6} above and which lives on the short time interval $\big[T_0,\half T_\ts\big]$, and the subsolution $(\un{\rho}, \un{\vm},\un{\mU},\un{q},\un{\vF})$, which exists on the long run, i.e.~on $[0,T+T_\ts]$, can be glued together to obtain a subsolution $(\rho^\ast,\vm^\ast,\mU^\ast,q^\ast,\vF^\ast)$ on the time interval $[T_0,T+T_0]$ which satisfies the desired initial condition. 

For the construction of $(\rho^\ast,\vm^\ast,\mU^\ast,q^\ast,\vF^\ast)$, we again proceed in six steps, cf Sect.~\ref{subsec:proof-outline}.

\subsubsection{Step 1: Dissipative weak solution} \label{subsubsec:infsol-s1}

According to Prop.~\ref{prop:ex-dws}, there exists a dissipative weak solutions 
$$
	\rho_\td \in \Cweak\big([0,T + 2 T_\ts];L^\gamma(\T^n;\R^+_0)\big), \qquad \vm_\td\in \Cweak\big([0,T + 2 T_\ts];L^{\frac{2\gamma}{\gamma+1}}(\T^n;\R^n)\big)
$$	
of \eqref{eq:euler-mass}-\eqref{eq:initial} with initial datum $(\rho_0,\vm_0)$. Here $T_\ts>0$ is the existence time of the strong solution for the same initial data $(\rho_0,\vm_0)$, see Sect.~\ref{subsubsec:moddata-s1} above. Combining this with Prop.~\ref{prop:ex-strong} and Prop.~\ref{prop:wsu}, we even have 
$$
	\rho_\td \in \Cweak\big([-T_\ts,T+ 2T_\ts];L^\gamma(\T^n;\R^+_0)\big), \qquad \vm_\td\in \Cweak\big([-T_\ts,T+ 2T_\ts];L^{\frac{2\gamma}{\gamma+1}}(\T^n;\R^n)\big)
$$	
and 
\begin{equation} \label{eq:dws=strong-small-times}
	(\rho_\td,\vm_\td)(t,\cdot) = (\rho_\ts,\vm_\ts)(t,\cdot)\qquad \text{ for all } t\in [-T_\ts,T_\ts]. 
\end{equation}
In particular, 
\begin{equation} \label{eq:defect=zero-small-times}
	\mathfrak{R}(t,\cdot)= 0 \qquad \text{ for a.e. } t\in (-T_\ts,T_\ts). 
\end{equation}

Analogous to \eqref{eq:strong-U-q}, we will use the abbreviation 
\begin{align}
	\mU_\td &:= 1_{\rho_\td>0} \frac{\vm_\td \otimes \vm_\td}{\rho_\td} - 1_{\rho_\td>0} \frac{|\vm_\td|^2}{n\rho_\td} \id & q_\td &:= 1_{\rho_\td>0} \frac{|\vm_\td|^2}{n \rho_\td} + p(\rho_\td) . \label{eq:dws-U-q}
\end{align}

\subsubsection{Step 2: Mollification of the dissipative weak solution and definition of $\un{\vm}$} \label{subsubsec:infsol-s2}

Similarly to Sect.~\ref{subsubsec:moddata-s2}, we mollify $(\rho_\td,\vm_\td)$ with the mollification parameter $0<\sigma < \half T_\ts$ found in Sect.~\ref{subsubsec:moddata-s2} above. This way we obtain 
\begin{align*} 
	\rho_2 &:= \rho_\td \ast \phi_{\sigma} , &
	\un{\vm} &:= \vm_\td \ast \phi_{\sigma} ,
\end{align*}
where 
\begin{equation} \label{eq:subsol2-reg-m}
	(\rho_2,\un{\vm}) \in C^\infty\big([0,T + T_\ts]\times \T^n;\R_0^+\times \R^n\big).
\end{equation}
Let us emphasise that -- in contrast to $\rho_1$ in Sect.~\ref{subsubsec:moddata-s2} above -- $\rho_2(t,\vx)$ does not need to be strictly larger than zero. This is the reason why we add $r>0$ to the density in Sect.~\ref{subsubsec:infsol-s3} below, see also Difficulty 2 in Sect.~\ref{subsec:proof-outline}. 

The pair $(\rho_2,\un{\vm})$ solves the following PDEs.

\begin{lemma} \label{lemma:pdes-mollified-dws}
	The PDEs 
	\begin{align} 
		\partial_t \rho_2 + \Div \un{\vm} &= 0, \label{eq:subsol2-mass-nolowerbound} \\
		\partial_t \un{\vm} + \Div \bigg[\underbrace{\bigg(1_{\rho_\td>0}\frac{\vm_\td \otimes \vm_\td}{\rho_\td} + p(\rho_\td) \id\bigg)}_{=\mU_\td + q_\td\id} \ast \phi_{\sigma} + R\bigg] &= 0 \label{eq:subsol2-mom-nolambda} 
	\end{align}
	hold pointwise in $[0,T+ T_\ts]\times \T^n$, where 
	\begin{equation} \label{eq:defn-R}
		R(t,\vx):= \int_{-T_\ts}^{T+2 T_\ts} \int_{\T^n} \phi_\sigma (t-s, \vx-\vy) \dR(s) \ds .
	\end{equation}
	There hold 
	\begin{itemize} 
		\item $R\in C^\infty\big([0,T+T_\ts]\times \T^n; \sym{n}\big)$, 
		\item $R(t,\vx)$ is positive semi-definite for all $[0,T+T_\ts]\times \T^n$, and 
		\item $R(t,\vx)=0$ for all $(t,\vx)\in \big[0,\half T_\ts\big]\times \T^n$.  
	\end{itemize}
\end{lemma}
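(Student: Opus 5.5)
The plan is to test the weak formulations \eqref{eq:dws-mass} and \eqref{eq:dws-mom} with the shifted mollifier itself. Fix $(t,\vx)\in[0,T+T_\ts]\times\T^n$ and set $\phi(s,\vy):=\phi_\sigma(t-s,\vx-\vy)$, and $\vphi := \phi_\sigma(t-\cdot,\vx-\cdot)\,\vec{e}_i$ for each unit vector $\vec{e}_i$. Since $\sigma<\half T_\ts$, the time support $[t-\sigma,t+\sigma]$ lies strictly inside $(-T_\ts,T+2T_\ts)$. The weak formulations hold on $[-T_\ts,T+2T_\ts]$: this follows by concatenating the dissipative solution with the strong solution for negative times via \eqref{eq:dws=strong-small-times}, using \eqref{eq:defect=zero-small-times} and the fact that the strong solution satisfies the equations classically. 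In particular the mass equation holds with time integrals starting at $-T_\ts$.

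Choosing $\tau$ equal to the right endpoint $T+2T_\ts$ makes the boundary terms vanish, because the test functions have compact support in the open interval. We get $\int\!\!\int \rho_\td\,\partial_s\phi + \vm_\td\cdot\Grad_\vy\phi = 0$. Since $\partial_s\phi = -\partial_t\phi_\sigma(t-s,\vx-\vy)$ and $\Grad_\vy\phi = -\Grad_\vx\phi_\sigma(t-s,\vx-\vy)$, differentiation under the integral gives $\partial_t\rho_2 + \Div\un{\vm}=0$ pointwise.

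The same computation for the momentum gives
\[
-\partial_t \un{\vm}_i - \partial_{x_j}\big[(\mU_\td+q_\td\id)\ast\phi_\sigma\big]_{ij} - \partial_{x_j}\int\!\!\int \phi_\sigma(t-s,\vx-\vy)\,{\rm d}\mathfrak{R}_{ij}(s)\,{\rm d}s = 0 ,
\]
which is \eqref{eq:subsol2-mom-nolambda}. Here I use that $\phi_\sigma(t-\cdot,\vx-\cdot)\in C^\infty$, so it is an admissible $C^M$ test function. One point needs care: \eqref{eq:dws-mass} only allows $W^{1,\infty}$ test functions on $(0,T)$, so the extension to negative times has to be justified. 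I would do this by splitting the time integral at $s=0$. On $[-T_\ts,0]$ the solution is the strong one, which satisfies the identity with boundary term $\int \rho_0\phi(0)$. This term cancels against the $t=0$ boundary term from the dissipative formulation taken with $\tau=T+2T_\ts$.

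For the properties of $R$, smoothness in $(t,\vx)$ follows by differentiating under the integral. This is allowed because $\mathfrak{R}\in L^\infty((0,T);\mathcal{M}^+)$ has uniformly bounded total variation and $\phi_\sigma$ is smooth with all derivatives bounded. Positive semidefiniteness is inherited from $\mathfrak{R}\in\mathcal{M}^+(\T^n;\sym{n})$: for $\xi\in\R^n$ we have $\xi\otimes\xi:R(t,\vx)=\int \xi\otimes\xi : \int \phi_\sigma(t-s,\vx-\cdot)\,{\rm d}\mathfrak{R}(s)\,{\rm d}s\ge0$, because $\phi_\sigma\ge0$. Finally, for $t\in[0,\half T_\ts]$ the support in $s$ is contained in $[-\sigma,\half T_\ts+\sigma]\subset(-T_\ts,T_\ts)$, where $\mathfrak{R}=0$ by \eqref{eq:defect=zero-small-times}; hence $R(t,\vx)=0$.

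I expect the main obstacle to be bookkeeping rather than substance. The boundary terms at $\tau$ and at $0$ must be handled correctly when extending to negative times. Measurability and integrability in $s$ of $s\mapsto\int\phi_\sigma\,{\rm d}\mathfrak{R}(s)$ must also be justified, which relies on weak-$*$ measurability of $\mathfrak{R}$.
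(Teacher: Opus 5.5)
Your proposal is correct and follows the paper's proof. You test \eqref{eq:dws-mass} and \eqref{eq:dws-mom} on the extended interval $[-T_\ts,T+2T_\ts]$ with the shifted mollifier $\phi_\sigma(t-\cdot,\vx-\cdot)$, then read off smoothness, positive semi-definiteness and the vanishing of $R$ on $[0,\half T_\ts]$ from the regularity of $\phi_\sigma$, the definition of $\mathcal{M}^+(\T^n;\sym{n})$ and \eqref{eq:defect=zero-small-times}; your splitting at $s=0$ with cancelling boundary terms, and your use of $\phi_\sigma\geq 0$, spell out details the paper leaves implicit.
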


\begin{proof}
	For arbitrary $(s,\vy)\in [0,T+ T_\ts]\times \T^n$, we choose the test function in \eqref{eq:dws-mass} to be $\phi_\sigma (s-t, \vy - \vx)$. Note that, since the dissipative weak solution $(\rho_\td,\vm_\td)$ exists on the time interval $[-T_\ts,T+2T_\ts]$, we may integrate in \eqref{eq:dws-mass} from $-T_\ts$ to $T+2T_\ts$ instead of from $0$ to $\tau$. This yields that \eqref{eq:subsol2-mass-nolowerbound} holds in $(s,\vy)$. Proceeding analogously with \eqref{eq:dws-mom}, we find \eqref{eq:subsol2-mom-nolambda}. 
	
	We deduce from \eqref{eq:space-fracR} that 
	$$
		(t,\vx) \mapsto \int_{\T^n} \varphi_\sigma (\vx-\vy) \dR(t)\quad \in \quad L^\infty\big((-T_\ts,T + 2T_\ts);C^\infty (\T^n; \sym{n})\big) .
	$$
	Hence we can mollify this mapping in time to obtain that
	\begin{align*}
		(t,\vx) \mapsto R(t,\vx) &= \int_{-T_\ts}^{T+2 T_\ts} \int_{\T^n} \phi_\sigma (t-s, \vx-\vy) \dR(s) \ds \\
		&= \int_{-T_\ts}^{T+2 T_\ts} \psi_\sigma(t-s) \int_{\T^n} \varphi_\sigma (\vx-\vy) \dR(s) \ds 
	\end{align*}
	lies in $C^\infty\big([0,T+T_\ts]\times \T^n; \sym{n}\big)$ as desired. 
	
	The fact that $R(t,\vx)$ is positive semi-definite for all $[0,T+T_\ts]\times \T^n$ follows immediately from the definition of the space $\mathcal{M}^+ (\T^n; \sym{n})$, see Sect.~\ref{subsec:bb-dws}. 
	
	Finally we note that \eqref{eq:defect=zero-small-times} implies $R(t,\vx)=0$ for all $(t,\vx)\in \big[0,\half T_\ts\big]\times \T^n$.  
\end{proof} 

Next, we prove an upper bound for $\rho_2$. To this end, we deduce from Young's inequality 
\begin{align}
	\| \rho_2 \|_{L^\infty((0,T+ T_\ts)\times \T^n)} &= \| \rho_\td \ast \phi_\sigma \|_{L^\infty((0,T + T_\ts)\times \T^n)} = \| (\rho_\td \ast_\vx \varphi_\sigma) \ast_t \psi_\sigma \|_{L^\infty((0,T + T_\ts )\times \T^n)} \notag \\
	&\leq \| \rho_\td \ast_\vx \varphi_\sigma \|_{L^\infty((0,T + T_\ts )\times \T^n)} \| \psi_\sigma \|_{L^1((0,T + T_\ts))} = \| \rho_\td \ast_\vx \varphi_\sigma \|_{L^\infty((0,T + T_\ts )\times \T^n)} , \label{eq:201}
\end{align}
and 
\begin{equation} \label{eq:202}
	\left\| \rho_\td(t,\cdot) \ast_\vx \varphi_\sigma \right\|_{L^\infty(\T^n)} \leq \left\| \rho_\td(t,\cdot) \right\|_{L^1(\T^n)} \left\| \varphi_\sigma \right\|_{L^\infty(\T^n)} \leq  \left(\frac{1}{\sigma}\right)^n \int_{\T^n} \rho_0 \dx\qquad \text{ for all } t\in [0,T+T_\ts].
\end{equation}
Here we have used that the total mass of a dissipative weak solution is conserved, which follows from \eqref{eq:dws-mass}, i.e.
$$
	\int_{\T^n} \rho_\td (t,\vx) \dx = \int_{\T^n} \rho_0 \dx \qquad \text{ for all } t\in [0,T+T_\ts].
$$
Plugging \eqref{eq:202} into \eqref{eq:201}, we find 
\begin{equation} \label{eq:bound-rho2}
	\rho_2(t,\vx) \leq \left(\frac{1}{\sigma}\right)^n \int_{\T^n} \rho_0 \dx, \qquad \text{ for all } (t,\vx)\in [0,T+T_\ts] \times \T^n,
\end{equation}
an estimate which will be used below.

\subsubsection{Step 3: A lower bound for the density and definition of $\un{\rho}$} \label{subsubsec:infsol-s3}

Analogously to Sect.~\ref{subsubsec:moddata-s3}, we set 
$$
	\un{\rho} := \rho_2 + r = \rho_\td \ast \phi_{\sigma} + r,
$$
using the parameter $0<r<\tfrac{1}{4}\ep$ fixed in Sect.~\ref{subsubsec:moddata-s3}. Using \eqref{eq:subsol2-reg-m} we find 
\begin{equation} \label{eq:subsol2-reg-rho}
	\un{\rho}\in C^\infty\big([0,T + T_\ts]\times \T^n;\R^+\big).
\end{equation}
Obviously, 
$$
	\un{\rho}(t,\vx)\geq r >0 \qquad\text{ for all }(t,\vx)\in [0,T + T_\ts]\times \T^n, 
$$
and, due to \eqref{eq:subsol2-mass-nolowerbound}, the PDE
\begin{equation} \label{eq:subsol2-mass} 
	\partial_t \un{\rho} + \Div \un{\vm} = 0, 
\end{equation}
holds pointwise in $[0,T + T_\ts]\times \T^n$.

\subsubsection{Step 4: Definition and properties of $\un{\mU}$ and $\un{q}$} \label{subsubsec:infsol-s4}

We set
\begin{align*}
	\un{\mU} &:= \mU_\td \ast \phi_{\sigma} + R - \frac{\tr(R)}{n} \id ,&
	\un{q} &:= q_\td \ast \phi_{\sigma} + \frac{\tr(R)}{n} + \un{\lambda}(t),
\end{align*} 
with $\mU_\td$ and $q_\td$ defined in \eqref{eq:dws-U-q}, $R$ fixed in \eqref{eq:defn-R}, and $\un{\lambda}(t)$ given by 
\begin{align*} 
	\un{\lambda}(t) &:= - \int_{\T^n} \left( q_\td \ast \phi_{\sigma} + \frac{\tr(R)}{n} - p(\un{\rho}) + \frac{2}{n} P(\un{\rho}) \right) \dx + \int_{\T^n}  \frac{2}{n} \left( \frac{|\vm_0|^2}{2\rho_0} + P(\rho_0) \right) \dx \\
	&\qquad + \frac{\delta}{8 n c^2} + \frac{1}{c^2} \exp( - ct ). 
\end{align*}
Here $c$ and $\delta$ are the parameters from Sect.~\ref{subsubsec:moddata-s1}.

Using Lemma~\ref{lemma:pdes-mollified-dws}, we find
\begin{equation} \label{eq:subsol2-reg-Uq}
	(\un{\mU},\un{q})\in C^\infty\big([0, T+ T_\ts]\times \T^n; \symz{n} \times \R\big),
\end{equation} 
and, according to \eqref{eq:subsol2-mom-nolambda}, the PDE 
\begin{equation} \label{eq:subsol2-mom} 
	\partial_t \un{\vm} + \Div (\un{\mU} + \un{q} \id) = 0 
\end{equation}
holds pointwise in $[0,T + T_\ts]\times \T^n$. 

In addition to that, we obtain from \eqref{eq:dws=strong-small-times} and Lemma~\ref{lemma:pdes-mollified-dws} that 
\begin{equation} \label{eq:un=ov-rho-m-U-q}
	(\un{\rho},\un{\vm},\un{\mU},\un{q})(t,\vx) = (\ov{\rho},\ov{\vm},\ov{\mU},\ov{q})(t,\vx) \qquad \text{ for all }(t,\vx)\in \big[0,\half T_\ts\big] \times \T^n,
\end{equation}
as well as
$$
	\un{\lambda}(t) = \ov{\lambda}(t)  \qquad \text{ for all }t\in \big[0,\half T_\ts\big] .
$$

We observe that the lower bound shown in Lemma~\ref{lemma:estimate-lambda-ov}, still holds for $\un{\lambda}$ on the large time interval $[0,T + T_\ts]$:

\begin{lemma} \label{lemma:estimate-lambda-un} 
	There holds
	$$
		\un{\lambda}(t) \geq \frac{(\gamma-1) \delta}{16c^2} + \frac{1}{c^2} \exp( - ct ) > 0 \qquad \text{ for all } t\in [0,T+T_\ts].
	$$
\end{lemma}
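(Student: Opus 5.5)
The plan is to repeat the argument of Lemma~\ref{lemma:estimate-lambda-ov}, with conservation of energy replaced by the energy inequality \eqref{eq:dws-energy} of the dissipative weak solution. The Reynolds defect now enters, but with the favourable sign.

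\emph{Step 1: regroup the first integral.} By \eqref{eq:dws-U-q},
\[
q_\td - p(\rho_\td) + \tfrac{2}{n}P(\rho_\td) = \tfrac{2}{n}\Big(1_{\rho_\td>0}\tfrac{|\vm_\td|^2}{2\rho_\td} + P(\rho_\td)\Big).
\]
Writing $-p(\un{\rho}) + \tfrac{2}{n}P(\un{\rho})$ as $\big(-p(\rho_\td) + \tfrac{2}{n}P(\rho_\td)\big)\ast\phi_\sigma$ plus a correction, and using $P = \frac{1}{\gamma-1}p$, I get
\begin{align*}
\un{\lambda}(t) &= \tfrac{2}{n}\Big[\int_{\T^n}\Big(\frac{|\vm_0|^2}{2\rho_0}+P(\rho_0)\Big)\dx - \int_{\T^n}\Big(\Big(1_{\rho_\td>0}\frac{|\vm_\td|^2}{2\rho_\td}+P(\rho_\td)\Big)\ast\phi_\sigma + \tfrac12 \tr R\Big)\dx\Big] \\
&\qquad - \Big(\tfrac{2}{n}\,\tfrac{1}{\gamma-1}-1\Big)\int_{\T^n}\Big(p(\un{\rho}) - p(\rho_\td)\ast\phi_\sigma\Big)\dx + \frac{\delta}{8nc^2} + \frac{1}{c^2}\exp(-ct).
\end{align*}

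\emph{Step 2: the bracket is nonnegative.} This is the step I expect to need the most care.
\begin{itemize}
\item Since all integrands are nonnegative, spatial mollification preserves spatial integrals, as in \eqref{eq:total-mass-mollification}. The same holds for the trace of the positive semidefinite measure $\mathfrak{R}(s)$: $\int_{\T^n}\tr R(t,\vx)\dx = \int \psi_\sigma(t-s)\int_{\T^n} \,{\rm d}(\tr\mathfrak{R})(s)\ds$.
\item For $t\in[0,T+T_\ts]$ and $\sigma<\half T_\ts$, the time mollification only samples $s\in[-T_\ts, T+2T_\ts]$, where the dissipative weak solution exists.
\item For a.e.\ $s\ge 0$, \eqref{eq:dws-energy} bounds the sampled quantity, energy plus $\tfrac12\tr\mathfrak{R}$, by the initial energy. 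For $s<0$ the solution is the strong one and $\mathfrak{R}=0$, by \eqref{eq:dws=strong-small-times} and \eqref{eq:defect=zero-small-times}, and energy conservation gives the same bound.
\item Averaging against $\psi_\sigma(t-\cdot)\ge 0$, which has unit mass, then shows the bracket is $\ge 0$.
\end{itemize}
The points to handle carefully are the a.e.-in-time nature of \eqref{eq:dws-energy} and the vacuum indicator. Neither causes trouble because the integrands are nonnegative.

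\emph{Step 3: the pressure correction.} The coefficient $\frac{2}{n}\frac{1}{\gamma-1}-1$ is nonnegative because $\gamma\le 1+\frac2n$, so I need an upper bound on $\int(p(\un{\rho}) - p(\rho_\td)\ast\phi_\sigma)\dx$.
\begin{itemize}
\item Jensen's inequality (Lemma~\ref{lemma:jensen-real-valued}) with convex $p$ gives $p(\rho_\td)\ast\phi_\sigma\ge p(\rho_2)$. Hence the integrand is at most $p(\rho_2+r)-p(\rho_2)$.
\item By \eqref{eq:bound-rho2}, $\rho_2$ takes values in $\big[0,\sigma^{-n}\int_{\T^n}\rho_0\dx\big]$. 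This is exactly the range covered by \eqref{eq:est-shift-pressure}, so the integrand is $\le \frac{(\gamma-1)\delta}{16c^2}$.
\end{itemize}

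\emph{Step 4: combine.} Putting Steps 1--3 together,
\[
\un{\lambda}(t) \ge \frac{\delta}{8nc^2} - \Big(\tfrac{2}{n}\,\tfrac{1}{\gamma-1}-1\Big)\frac{(\gamma-1)\delta}{16c^2} + \frac{1}{c^2}\exp(-ct) = \frac{(\gamma-1)\delta}{16c^2} + \frac{1}{c^2}\exp(-ct).
\]
The right-hand side is positive, which is the claim. The final arithmetic is identical to that in Lemma~\ref{lemma:estimate-lambda-ov}.
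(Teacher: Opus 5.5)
Your proposal is correct and follows essentially the same route as the paper. The paper also combines two ingredients. First, the energy inequality \eqref{eq:dws-energy}, extended to negative times via the strong solution with vanishing defect, together with integral-preservation of spatial mollification, yields the nonnegative bracket; this is the paper's \eqref{eq:301}. Second, Jensen's inequality together with \eqref{eq:est-shift-pressure} on the range \eqref{eq:bound-rho2} bounds the pressure correction, whose coefficient $\frac{2}{n}\frac{1}{\gamma-1}-1$ is nonnegative; this is the paper's \eqref{eq:302}. The differences are only cosmetic: you regroup $\un{\lambda}$ algebraically before estimating, apply Jensen pointwise, and treat the a.e.-in-time validity of the energy inequality a bit more carefully than the paper does.
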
 

\begin{proof} 
	We proceed analogously to the proof of Lemma~\ref{lemma:estimate-lambda-ov}. First we observe that, analogously to \eqref{eq:total-mass-mollification}, there holds
	$$
		\int_{\T^n} \int_{\T^n} \varphi_\sigma (\vx - \vy) \,{\rm d}(\tr \mathfrak{R}) \dx = \int_{\T^n} \int_{\T^n} \varphi_\sigma (\vx - \vy) \dx \,{\rm d}(\tr \mathfrak{R}) = \int_{\T^n} \,{\rm d}(\tr \mathfrak{R}).
	$$
	Using this fact together with \eqref{eq:total-mass-mollification} we deduce from the energy inequality \eqref{eq:dws-energy} 
	\begin{align*}
		&\int_{\T^n} \left( 1_{\rho_\td>0} \frac{|\vm_\td|^2}{2\rho_\td} + P(\rho_\td) \right) \ast_\vx \varphi_\sigma \dx + \frac{1}{2} \int_{\T^n} \int_{\T^n} \varphi_\sigma (\vx - \vy) \,{\rm d}(\tr \mathfrak{R}) \dx \\
		&= \int_{\T^n} \left( 1_{\rho_\td>0} \frac{|\vm_\td|^2}{2\rho_\td} + P(\rho_\td) \right) \dx + \frac{1}{2} \int_{\T^n} \,{\rm d}(\tr \mathfrak{R}) \\
		&\leq \int_{\T^n} \left( \frac{|\vm_0|^2}{2\rho_0} + P(\rho_0) \right) \dx
	\end{align*}
	for all $t\in [0, T+ 2T_\ts]$. According to \eqref{eq:dws=strong-small-times}, \eqref{eq:defect=zero-small-times} and the fact that strong solutions conserve the total energy, the above estimate even holds for $t\in [-T_\ts, 0]$. Hence, by mollifying in time, we end up with 
	\begin{align*}
		&\int_{\T^n} \left( 1_{\rho_\td>0} \frac{|\vm_\td|^2}{2\rho_\td} + P(\rho_\td) \right) \ast \phi_\sigma \dx + \frac{1}{2} \int_{\T^n} \tr(R) \dx \\
		&= \left(\int_{\T^n} \left( 1_{\rho_\td>0} \frac{|\vm_\td|^2}{2\rho_\td} + P(\rho_\td) \right) \ast_\vx \varphi_\sigma \dx \right) \ast_t \psi_\sigma + \frac{1}{2} \left( \int_{\T^n} \int_{\T^n} \varphi_\sigma (\vx-\vy) \,{\rm d}(\tr \mathfrak{R}) \dx \right) \ast_t \psi_\sigma \\
		&\leq \int_{\T^n} \left( \frac{|\vm_0|^2}{2\rho_0} + P(\rho_0) \right) \dx
	\end{align*}
	for all $t\in [0, T+ T_\ts]$. Recalling \eqref{eq:dws-U-q}, this leads to  
	\begin{equation} \label{eq:301} 
		- \int_{\T^n} \left( \left( q_\td - p(\rho_\td) + \frac{2}{n} P(\rho_\td) \right) \ast \phi_{\sigma} + \frac{\tr(R)}{n} \right) \dx + \int_{\T^n}  \frac{2}{n} \left( \frac{|\vm_0|^2}{2\rho_0} + P(\rho_0) \right) \dx \ \geq \ 0
	\end{equation}
	for $t\in[0,T + T_\ts]$. 
	
	Since $\tfrac{2}{n} \, \tfrac{1}{\gamma-1} - 1\geq 0$, and, according to Jensen's inequality (Lemma~\ref{lemma:jensen-real-valued}),
	$$
		\int_{\T^n} \Big( p(\rho_\td) \ast \phi_{\sigma} - p(\rho_2) \Big) \dx \geq 0, 
	$$
	as well as 
	$$
		\int_{\T^n} \Big( p(\rho_2) - p(\un{\rho}) \Big) \dx \geq - \frac{(\gamma-1) \delta}{16 c^2} 
	$$ 
	which follows from  \eqref{eq:est-shift-pressure} and \eqref{eq:bound-rho2}, we may infer
	\begin{align} 
		&- \int_{\T^n} \Big(- p(\un{\rho}) + \tfrac{2}{n} P(\un{\rho}) \Big) \dx + \int_{\T^n} \Big( - p(\rho_\td) + \tfrac{2}{n} P(\rho_\td) \Big) \ast \phi_{\sigma} \dx \notag \\
		&= \Big( \tfrac{2}{n} \, \tfrac{1}{\gamma-1} - 1 \Big) \left( \int_{\T^n} \Big( p(\rho_\td) \ast \phi_{\sigma} - p(\rho_2) \Big) \dx + \int_{\T^n} \Big( p(\rho_2) - p(\un{\rho}) \Big) \dx \right) \notag \\
		&\geq - \Big( \tfrac{2}{n} \, \tfrac{1}{\gamma-1} - 1 \Big) \frac{(\gamma-1) \delta}{16 c^2} \label{eq:302}
	\end{align}
	for all $t\in [0,T+T_\ts]$. 
	
	Plugging \eqref{eq:301} and \eqref{eq:302} into the definition of $\un{\lambda}$, we find
	$$
		\un{\lambda}(t) \geq - \left( \frac{2}{n} \, \frac{1}{\gamma-1} - 1 \right) \frac{(\gamma-1) \delta}{16 c^2} + \frac{\delta}{8 n c^2} + \frac{1}{c^2} \exp( - ct ) = \frac{(\gamma-1) \delta}{16 c^2} + \frac{1}{c^2} \exp( - ct ) 
	$$
	as desired.
\end{proof}

\begin{rem} \label{rem:gamma-2}
	In the proof of Lemma~\ref{lemma:estimate-lambda-un} it was essential that $\gamma \leq 1 + \frac{2}{n}$, and that the estimate \eqref{eq:est-shift-pressure} still holds for $\rho_2$. The latter follows from \eqref{eq:bound-rho2}. 
\end{rem}

Next, we set 
\begin{align*} 
	\un{\mM}_1(t,\vx) &:= \frac{\un{\vm}\otimes\un{\vm}}{\rho_2} - \frac{\un{\vm}\otimes\un{\vm}}{\un{\rho}} , \\
	\un{\mM}_2(t,\vx) &:= \big( p(\un{\rho}) - p(\rho_2) \big) \id, \\
	\un{\mM}_3(t,\vx) &:= \left(1_{\rho_\td>0}\frac{\vm_\td \otimes \vm_\td}{\rho_\td} + p(\rho_\td) \id \right)\ast \phi_{\sigma} - \left( \frac{\un{\vm}\otimes\un{\vm}}{\rho_2} + p(\rho_2)\id \right) 
\end{align*} 
for $(t,\vx)\in [0,T + T_\ts]\times \T^n$, similarly to $\ov{\mM}_i$ ($i=1,2,3$) in Sect.~\ref{subsubsec:moddata-s4} above. Some of the bounds obtained in Lemma~\ref{lemma:estimate-matrices-ov} are still valid for $\un{\mM}_i$ ($i=1,2,3$) on the large time interval $[0,T + T_\ts]$, more precisely:

\begin{lemma} \label{lemma:estimate-matrices-un}
	For any $(t,\vx)\in [0,T + T_\ts]\times \T^n$ the following estimates hold:
	\begin{align*} 
		\un{\mM}_1 &\geq 0, &
		\un{\mM}_2 &\leq \frac{(\gamma-1) \delta}{16c^2} \id, &
		\un{\mM}_3 & \geq 0. 
	\end{align*}
\end{lemma}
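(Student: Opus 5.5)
The plan is to argue exactly as in the proof of Lemma~\ref{lemma:estimate-matrices-ov}, handling one new feature along the way. Unlike $\rho_1$, the mollified density $\rho_2$ (and $\rho_\td$ itself) may vanish. So I first fix the interpretation of the quotients. Where $\rho_2(t,\vx)=0$, I interpret $\frac{\un{\vm}\otimes\un{\vm}}{\rho_2}$ as $0$, which is consistent with the identity $1_{\rho>0}\frac{\vm\otimes\vm}{\rho}$ used in Defn.~\ref{defn:dws}. For this to be harmless I need $\un{\vm}=0$ wherever $\rho_2=0$.

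To get this, I will use that dissipative weak solutions satisfy $\vm_\td=0$ a.e.\ on $\{\rho_\td=0\}$. This holds for dissipative weak solutions obtained from consistent approximations, via the bound $|\vm|\le\sqrt{\rho}\,\sqrt{2E}$ passed to the limit, cf.~\cite{FLMS}. Since $\rho_\td\ge0$ and the mollifier is positive on its support, $\rho_2(t,\vx)=0$ forces $\rho_\td=0$ a.e.\ on the support of $\phi_\sigma(t-\cdot,\vx-\cdot)$. Hence $\vm_\td=0$ there and $\un{\vm}(t,\vx)=0$.

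\emph{Bound on $\un{\mM}_1$.} Where $\rho_2>0$, I write $\un{\mM}_1=\un{\vm}\otimes\un{\vm}\,\big(\frac{1}{\rho_2}-\frac{1}{\rho_2+r}\big)$. The rank-one matrix is positive semi-definite and the scalar factor is nonnegative, so $\un{\mM}_1\ge0$. Where $\rho_2=0$, both terms vanish. Note that no upper bound is claimed here, which is fortunate: $1/\rho_2$ is not controlled away from the strong-solution regime.

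\emph{Bound on $\un{\mM}_2$.} By \eqref{eq:bound-rho2}, $\rho_2(t,\vx)\in\big[0,\sigma^{-n}\int_{\T^n}\rho_0\dx\big]$ for all $(t,\vx)\in[0,T+T_\ts]\times\T^n$. This is exactly the range on which \eqref{eq:est-shift-pressure} was imposed. Therefore $p(\rho_2+r)-p(\rho_2)\le\frac{(\gamma-1)\delta}{16c^2}$, which gives $\un{\mM}_2\le\frac{(\gamma-1)\delta}{16c^2}\id$.

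\emph{Bound on $\un{\mM}_3$.} This is the main step, and I will treat it by matrix-valued Jensen, as for $\ov{\mM}_3$. Consider the map $G(\rho,\vm)=\frac{\vm\otimes\vm}{\rho}+p(\rho)\id$ for $\rho>0$, extended by $G(0,\mathbf 0)=0$. Lemma~\ref{lemma:map-convex} gives convexity of $G$ in the matrix sense on $\rho>0$. I then check that the extension to $\{(0,\mathbf0)\}$ preserves convexity for every quadratic form $\xi\otimes\xi:G$. This holds because $\xi\otimes\xi:G(\rho,\vm)=\frac{(\vm\cdot\xi)^2}{\rho}+p(\rho)|\xi|^2$ is the perspective-type lower semicontinuous convex function, and its value at the origin is $0$.

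For fixed $(t,\vx)$, the measure $\phi_\sigma(t-s,\vx-\vy)\,{\rm d}s\,{\rm d}\vy$ is a probability measure, since $\sigma<\frac12T_\ts$ keeps it inside $[-T_\ts,T+2T_\ts]$. On the vacuum set, $(\rho_\td,\vm_\td)=(0,\mathbf0)$ a.e.\ by the remark above, so $1_{\rho_\td>0}\frac{\vm_\td\otimes\vm_\td}{\rho_\td}+p(\rho_\td)\id=G(\rho_\td,\vm_\td)$ a.e. Lemma~\ref{lemma:jensen-matrix-valued}, applied to each scalar form $\xi\otimes\xi:G$, then yields $G(\rho_\td,\vm_\td)\ast\phi_\sigma\ge G(\rho_2,\un{\vm})$, i.e.\ $\un{\mM}_3\ge0$. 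If $\rho_2=0$ at the point, both sides are compared with $G(0,\mathbf0)=0$, and the left-hand side is positive semi-definite anyway.

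The main obstacle I expect is the vacuum issue: justifying $\vm_\td=0$ a.e.\ on $\{\rho_\td=0\}$ and the convex extension of $G$, so that Jensen applies without the strict positivity used in Lemma~\ref{lemma:estimate-matrices-ov}.
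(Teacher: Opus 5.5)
Your proposal is correct and follows the paper's proof essentially step for step. For $\un{\mM}_1$ you use a positive semi-definite rank-one matrix times $\frac{1}{\rho_2}-\frac{1}{\rho_2+r}\ge 0$; for $\un{\mM}_2$ the range bound \eqref{eq:bound-rho2} feeds into \eqref{eq:est-shift-pressure}; and for $\un{\mM}_3$ you apply matrix-valued Jensen (Lemma~\ref{lemma:jensen-matrix-valued} together with Lemma~\ref{lemma:map-convex}). Your extra handling of points where $\rho_2=0$ — using $\vm_\td=0$ a.e.\ on the vacuum set and the convex extension $G(0,\mathbf{0})=0$ — is a valid refinement of a detail the paper passes over silently.
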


\begin{proof}
	We follow the proof of Lemma~\ref{lemma:estimate-matrices-ov}. Again, $\frac{1}{\rho_2} - \frac{1}{\rho_2 + r}\geq 0$ implies $\un{\mM}_1 \geq 0$. As the estimate \eqref{eq:est-shift-pressure} still holds for $\rho_2$ due to \eqref{eq:bound-rho2}, we deduce $\un{\mM}_2 \leq \frac{(\gamma-1) \delta}{16c^2} \id$. Finally, $\un{\mM}_3 \geq 0$ is a consequence of Jensen's inequality (Lemma~\ref{lemma:jensen-matrix-valued}). 
\end{proof} 

Using Lemmas~\ref{lemma:pdes-mollified-dws}, \ref{lemma:estimate-lambda-un} and \ref{lemma:estimate-matrices-un}, we compute for any $(t,\vx)\in [0,T+T_\ts]\times \T^n$ 
\begin{align} 
	&\frac{\un{\vm}\otimes\un{\vm}}{\un{\rho}} + p(\un{\rho})\id - \un{\mU} - \un{q}\id \notag \\
	&= -\un{\mM}_1 + \un{\mM}_2 - \un{\mM}_3 - R - \un{\lambda}(t) \id \notag \\
	&\leq \frac{(\gamma-1) \delta}{16c^2} \id - \frac{(\gamma-1) \delta}{16c^2} \id - \frac{1}{c^2} \exp( - ct ) \id\ \leq \ - \frac{1}{c^2} \exp( - c (T+T_\ts)) \id, \label{eq:subsol2-matrix-neg-def}
\end{align}
similarly to \eqref{eq:matrix-upper-bound}.

\subsubsection{Step 5: Definition of $\un{\vF}$} \label{subsubsec:infsol-s5}

Goal of this subsection is to define $\un{\vF}$. It will turn out that this is much simpler than the definition of $\ov{\vF}$ carried out in Sect.~\ref{subsubsec:moddata-s5} above. We compute 
\begin{align*} 
	&\int_{\T^n} \partial_t \Big( \tfrac{n}{2} \big(\un{q} - p(\un{\rho})\big) + P(\un{\rho}) \Big) \dx \\
	&= \partial_t \int_{\T^n} \left( \frac{n}{2} \left(q_\td \ast \phi_{\sigma} + \frac{\tr (R)}{n} + \un{\lambda}(t) - p(\un{\rho})\right) + P(\un{\rho}) \right) \dx \\
	&= \partial_t \bigg[ \int_{\T^n} \left( \frac{n}{2} \left(q_\td \ast \phi_{\sigma} + \frac{\tr (R)}{n} - p(\un{\rho})\right) + P(\un{\rho}) \right) \dx + \frac{n}{2} \un{\lambda}(t) \bigg] \\
	&= \partial_t \bigg[ \int_{\T^n} \left( \frac{n}{2} \left(q_\td \ast \phi_{\sigma} + \frac{\tr (R)}{n} - p(\un{\rho})\right) + P(\un{\rho}) \right) \dx \\
	&\qquad - \frac{n}{2} \int_{\T^n} \left( q_\td \ast \phi_{\sigma} + \frac{\tr(R)}{n} - p(\un{\rho}) + \frac{2}{n} P(\un{\rho}) \right) \dx \bigg] - \frac{n}{2c} \exp(-ct) \ \leq\ 0.
\end{align*} 
Hence, Cor.~\ref{cor:poisson-not-mf} yields\footnote{Note that $w$ is indeed smooth because $\un{\rho},\un{q}$ are, see \eqref{eq:subsol2-reg-rho} and \eqref{eq:subsol2-reg-Uq}, respectively.} $w\in C^\infty([0,T + T_\ts]\times \T^n)$ which solves 
\begin{equation} \label{eq:subsol2-poisson-energy}
	\Lap w \geq \partial_t \Big( \tfrac{n}{2} \big(\un{q} - p(\un{\rho})\big) + P(\un{\rho}) \Big).
\end{equation}

We now simply define
$$
	\un{\vF} := -\Grad w .
$$
Obviously, there holds 
\begin{equation} \label{eq:subsol2-reg-F}
	\un{\vF} \in C^\infty\big([0,T + T_\ts]\times \T^n; \R^n\big),
\end{equation}
and \eqref{eq:subsol2-poisson-energy} implies that the inequality 
\begin{equation} \label{eq:subsol2-energy} 
	\partial_t \Big( \tfrac{n}{2} \big(\un{q} - p(\un{\rho}) \big) + P(\un{\rho})\Big) + \Div \un{\vF} \leq 0 
\end{equation}
holds pointwise in $[0,T + T_\ts]\times \T^n$.

\subsubsection{Step 6: Infinitely many solutions due to Prop.~\ref{prop:ci-nonuniqueness}}  \label{subsubsec:infsol-s6} 

In the preceding steps, we have constructed a tuple of functions 
\begin{equation} \label{eq:subsol2-reg}
	(\un{\rho},\un{\vm},\un{\mU},\un{q},\un{\vF})\in C^\infty\big([0,T + T_\ts]\times \T^n;\R^+ \times \phase\big),
\end{equation}
see \eqref{eq:subsol2-reg-rho}, \eqref{eq:subsol2-reg-m}, \eqref{eq:subsol2-reg-Uq} and \eqref{eq:subsol2-reg-F}. According to \eqref{eq:subsol2-mass}, \eqref{eq:subsol2-mom} and \eqref{eq:subsol2-energy}, this tuple fulfills the linear system \eqref{eq:eulerlin-mass}-\eqref{eq:eulerlin-energy} for all $(t,\vx)\in (0,T + T_\ts)\times \T^n$.

By continuity, we know that 
$$
	(\un{\rho},\un{\vm},\un{\mU},\un{q},\un{\vF})(t,\vx)\in \un{\sC}\qquad \text{ for all }(t,\vx)\in [0,T + T_\ts]\times \T^n,
$$ 
with a suitable compact set $\un{\sC}\subset \R^+\times \phase$. Due to \eqref{eq:subsol2-matrix-neg-def} the assertion \eqref{eq:matrix-neg-def} holds on $\un{\sC}$. Hence, according to Prop.~\ref{prop:U}~\ref{item:U-1}, there exists a constant $\un{Q}>0$ such that 
\begin{equation} \label{eq:subsol2-subscond}
	(\un{\vm}, \un{\mU}, \un{q},\un{\vF})(t,\vx) \in \sU_{\un{\rho}(t,\vx),\un{Q}} \qquad \text{ for all }(t,\vx)\in [0,T + T_\ts]\times \T^n.
\end{equation}
Without loss of generality, we may assume that 
\begin{equation} \label{eq:order-Qun-vs-Qov}
	\ov{Q}(t,\vx) \leq \un{Q}\qquad \text{ for all }(t,\vx)\in \big[ 0, \half T_\ts \big] \times \T^n, 
\end{equation}	
by taking $\un{Q}$ sufficiently large, recall \eqref{eq:subsol1-reg-Q}. 

Finally, we define a tuple of functions 
\begin{equation} \label{eq:subsolstar-rho-m-U-q}
	(\rho^\ast,\vm^\ast,\mU^\ast,q^\ast)(t,\vx) := \left\{ \begin{array}{ll} (\ov{\rho},\widetilde{\vm},\widetilde{\mU},\widetilde{q})(t,\vx) & \text{ for }(t,\vx)\in \big[T_0, \half T_\ts\big) \times \T^n, \\ (\un{\rho},\un{\vm},\un{\mU},\un{q})(t,\vx) & \text{ for } (t,\vx) \in \big(\tfrac{1}{4} T_\ts, T + T_0\big] \times \T^n, \end{array} \right. 
\end{equation}
with the time $T_0$ from Sect.~\ref{subsubsec:moddata-s6}. Note that $T_0< \tfrac{1}{4} T_\ts <T_\ts$, and 
\begin{equation} \label{eq:un=til-rho-m-U-q}
	(\ov{\rho},\widetilde{\vm},\widetilde{\mU},\widetilde{q})(t,\vx) = (\ov{\rho},\ov{\vm},\ov{\mU},\ov{q})(t,\vx) = (\un{\rho},\un{\vm},\un{\mU},\un{q})(t,\vx)\qquad \text{ for all } (t,\vx)\in \big( \tfrac{1}{4} T_\ts ,  \half T_\ts\big) \times \T^n 
\end{equation}
due to \eqref{eq:mod-subsol-outside} and \eqref{eq:un=ov-rho-m-U-q}. Therefore, we may deduce from \eqref{eq:subsol1-reg-rho}, \eqref{eq:subsol1-mod-reg} and \eqref{eq:subsol2-reg} that 
$$
	\rho^\ast \in C^1\big( [T_0, T + T_0] \times \T^n; \R^+\big),  
$$
as well as 
\begin{align*}
	(\vm^\ast, \mU^\ast, q^\ast) &\in \Cweak\big([T_0,T + T_0];L^2(\T^n;\R^n \times \symz{n} \times \R )\big) \\
	&\qquad \cap C^1\big((T_0,T + T_0)\times \T^n;\R^n \times \symz{n} \times \R \big).
\end{align*} 

In order to define $\vF^\ast$ and $Q^\ast$, we choose two non-increasing cut-off functions in time $\chi_1, \chi_2 \in C^\infty\big([0,\infty);[0,1]\big)$ with 
\begin{align*}
	\chi_1(t) &= \left\{ \begin{array}{cc} 1 & \text{ if } t\leq \tfrac{3}{8} T_\ts, \\ 0 & \text{ if } t\geq \half T_\ts, \end{array} \right. &
	\chi_2(t) &= \left\{ \begin{array}{cc} 1 & \text{ if } t\leq \tfrac{1}{4} T_\ts, \\ 0 & \text{ if } t\geq \tfrac{3}{8} T_\ts. \end{array} \right. 
\end{align*}
We may then set 
\begin{align*}
	\vF^\ast &:= \chi_1 \widetilde{\vF} + (1-\chi_1) \un{\vF}, &
	Q^\ast &:= \chi_2 \ov{Q} + (1-\chi_2) \un{Q}.
\end{align*}
We infer from \eqref{eq:subsol1-mod-reg} and \eqref{eq:subsol2-reg} that 
$$
	\vF^\ast \in \Cweak\big([T_0,T + T_0];L^2(\T^n;\R^n)\big) \cap C^1\big((T_0,T + T_0)\times \T^n;\R^n\big),
$$
and from \eqref{eq:subsol1-reg-Q} that
$$
	Q^\ast \in C^1\big( [T_0, T + T_0] \times \T^n; \R^+\big). 
$$

Due to \eqref{eq:subsolstar-rho-m-U-q}, the tuple $(\rho^\ast,\vm^\ast, \mU^\ast, q^\ast)$ satisfies the PDEs \eqref{eq:eulerlin-mass} and \eqref{eq:eulerlin-mom} pointwise for all $(t,\vx)\in (T_0,T+T_0) \times \T^n$, since both tuples $(\ov{\rho},\widetilde{\vm},\widetilde{\mU},\widetilde{q})$ and $(\un{\rho},\un{\vm},\un{\mU},\un{q})$ do. Let us next show that $(\rho^\ast,\vm^\ast, \mU^\ast, q^\ast,\vF^\ast)$ also satisfies \eqref{eq:eulerlin-energy} pointwise for all $(t,\vx)\in (T_0,T+T_0) \times \T^n$. For $t\in \big(T_0, \tfrac{3}{8} T_\ts \big] \cup \big[\half T_\ts, T_0+T\big)$ this follows immediately from the fact that \eqref{eq:eulerlin-energy} holds for $(\ov{\rho},\widetilde{\vm},\widetilde{\mU},\widetilde{q},\widetilde{\vF})$ and $(\un{\rho},\un{\vm},\un{\mU},\un{q},\un{\vF})$, respectively. For $t\in \big( \tfrac{3}{8} T_\ts , \half T_\ts \big)$ we compute with the help of \eqref{eq:un=til-rho-m-U-q} 
\begin{align*}
	&\partial_t \Big(\tfrac{n}{2} \big(q^\ast - p(\rho^\ast)\big) +  P(\rho^\ast)\Big) + \Div \vF^\ast \\
	&= \chi_1 \partial_t \Big(\tfrac{n}{2} \big(q^\ast - p(\rho^\ast)\big) +  P(\rho^\ast)\Big) + (1-\chi_1) \partial_t \Big(\tfrac{n}{2} \big(q^\ast - p(\rho^\ast)\big) +  P(\rho^\ast)\Big) + \Div \big( \chi_1 \widetilde{\vF} + (1-\chi_1) \un{\vF} \big) \\
	&=  \chi_1 \Big[ \partial_t \Big(\tfrac{n}{2} \big(\widetilde{q} - p(\ov{\rho})\big) +  P(\ov{\rho})\Big) + \Div \widetilde{\vF} \Big] + (1-\chi_1) \Big[ \partial_t \Big(\tfrac{n}{2} \big(\un{q} - p(\un{\rho})\big) +  P(\un{\rho})\Big) + \Div \un{\vF} \Big] \ \leq \ 0.
\end{align*}

Let us next prove that 
\begin{equation} \label{eq:401}
	(\vm^\ast, \mU^\ast, q^\ast, \vF^\ast) (t,\vx) \in \sU_{\rho^\ast(t,\vx),Q^\ast(t,\vx)}\qquad \text{ for all }(t,\vx)\in (T_0,T+T_0)\times \T^n. 
\end{equation} 
Again for $t\in \big( T_0, \tfrac{1}{4} T_\ts\big] \cup \big[\half T_\ts , T + T_0\big)$, \eqref{eq:401} simply follows from \eqref{eq:mod-subsol-U} and \eqref{eq:subsol2-subscond}, respectively. If $t\in \big(\frac{1}{4} T_\ts , \tfrac{3}{8} T_\ts \big]$, then $(\rho^\ast,\vm^\ast, \mU^\ast, q^\ast,\vF^\ast)(t,\vx)=(\ov{\rho},\widetilde{\vm},\widetilde{\mU},\widetilde{q},\widetilde{\vF})(t,\vx)$ and $Q^\ast(t,\vx)\geq \ov{Q}(t,\vx)$ due to \eqref{eq:order-Qun-vs-Qov}. Hence, again \eqref{eq:mod-subsol-U} yields \eqref{eq:401}. Finally, for $t\in \big(\tfrac{3}{8} T_\ts, \half T_\ts\big)$ we know that both
\begin{equation*}
	(\widetilde{\vm}, \widetilde{\mU}, \widetilde{q}, \widetilde{\vF}) (t,\vx) \in \sU_{\rho^\ast(t,\vx),Q^\ast(t,\vx)}\quad \text{ and }\quad (\un{\vm}, \un{\mU}, \un{q} , \un{\vF}) (t,\vx) \in \sU_{\rho^\ast(t,\vx),Q^\ast(t,\vx)}
\end{equation*}
due to \eqref{eq:subsolstar-rho-m-U-q}, \eqref{eq:subsol2-subscond}, \eqref{eq:mod-subsol-U} and \eqref{eq:order-Qun-vs-Qov}. Using \eqref{eq:un=til-rho-m-U-q} and the convexity of the set $\sU_{\rho^\ast(t,\vx),Q^\ast(t,\vx)}$, we conclude with \eqref{eq:401}.

Now, as we have verified all the assumptions of Prop.~\ref{prop:ci-nonuniqueness}, we may apply this proposition to obtain infinitely many 
$$
	\vm\in \Cweak\big([T_0,T+T_0];L^2(\T^n;\R^n)\big) \cap L^\infty\big((T_0,T+T_0)\times \T^n; \R^n\big)
$$ 
such that $(\rho=\rho^\ast, \vm)$ solve the equations \eqref{eq:ci-sol-mass}, \eqref{eq:ci-sol-mom} and the inequality \eqref{eq:ci-sol-energy}. We also know that
\begin{align*}
	\rho^\ast(T_0,\cdot ) &= \rho_{0,\ep}, &
	\vm^\ast(T_0,\cdot ) &= \vm_{0,\ep},
\end{align*}
see \eqref{eq:subsolstar-rho-m-U-q} and \eqref{eq:defn-mod-data}. Moreover, there holds 
$$
	\tfrac{n}{2} \big( q^\ast - p(\rho^\ast)\big) + P(\rho^\ast) = \frac{|\vm_{0,\ep}|^2}{2 \rho_{0,\ep}} + P(\rho_{0,\ep})
$$
according to \eqref{eq:subsolstar-rho-m-U-q} and \eqref{eq:mod-subsol-K}. Plugging these facts into \eqref{eq:ci-sol-mass}, \eqref{eq:ci-sol-mom}, \eqref{eq:ci-sol-energy}, we end up with \eqref{eq:euler-weak-mass}, \eqref{eq:euler-weak-mom}, \eqref{eq:euler-weak-energy}, respectively. Hence, all of the infinitely many pairs $(\rho,\vm)$ are indeed admissible weak solutions on the time interval $(T_0, T+ T_0)$, or -- by shifting in time -- on $(0,T)$. Consequently, the initial datum $(\rho_{0,\ep},\vm_{0,\ep})$ is wild in the sense of Defn.~\ref{defn:wild-data} as desired.

\section{Comparison with \cite{ChiFei24_1} and further remarks} \label{sec:remarks} 

We showed that the initial data for which the isentropic Euler equations \eqref{eq:euler-mass}, \eqref{eq:euler-mom} admit infinitely many admissible weak solutions form a dense set in any $L^r$, $r\in [1,\infty)$, see Thm.~\ref{thm:density}. The crucial difference to the work by \name{Chiodaroli}-\name{Feireisl}~\cite{ChiFei24_1}, who proved a similar result, is that our result is global in time, cf. Rem.~\ref{rem:local-vs-global-in-time}. In order to explain this in more detail, let us define ``$T$-wild'' initial data as follows.

\begin{defn} 
	Let $T>0$. We call an initial datum $(\rho_{0},\vm_{0})\in L^\infty(\T^n; \R^+\times \R)$ \emph{$T$-wild} if there exist infinitely many admissible weak solutions 
	$$
		(\rho,\vm) \in L^\infty\big((0,T) \times \T^n; \R^+ \times \R^n\big)
	$$
	on the time interval $(0,T)$. Moreover, we denote the set of all $T$-wild initial data by
	$$
		\sS_T := \left\{ (\rho_{0},\vm_{0}) \in L^\infty\big(\T^n; \R^+ \times \R^n\big)\, \Big|\, (\rho_{0},\vm_{0})\text{ is }T\text{-wild} \right\}.
	$$
\end{defn}

Now, we may phrase Thm.~\ref{thm:density} as:
\begin{center}
	The set $\bigcap_{T>0} \sS_T$ is a dense subset of $L^r\big(\T^n; \R_0^+ \times \R^n\big)$\\ with respect to the $L^r$-norm (for any $r\in [1,\infty)$). 
\end{center}
On the contrary, \name{Chiodaroli}-\name{Feireisl}~\cite{ChiFei24_1} showed that the set $\bigcup_{T>0} \sS_T$ is dense, which is a weaker result because of the obvious fact 
$$
	\bigcap_{T>0} \sS_T \subset \bigcup_{T>0} \sS_T. 
$$
Note furthermore that Thm.~\ref{thm:density} also implies that for any fixed $T>0$, the set $\sS_T$ is dense in the above sense. 

Finally, we remark that using similar ideas one may show an analogous statement of Thm.~\ref{thm:density} for the incompressible Euler equations, i.e.~a version of \name{Sz{\'e}kelyhidi}-\name{Wie\-de\-mann}~\cite[Cor.~3]{SzeWie12} which considers solutions satisfying the \emph{local} energy inequality. The details of such a result are left for future work.

\section*{Acknowledgements} 
D.W.B.~acknowledges support from the Cambridge Trust and the Cantab Capital Institute for Mathematics of Information. 
S.M.~acknowledges financial support from the Alexander von Humboldt foundation and also from the Deutsche Forschungsgemeinschaft (DFG, German Research Foundation) within SPP 2410, project number 525935467.

\appendix

\section{Existence of a suitable differential operator in 3-D} 

Goal of this section is to prove a 3-D version of \cite[Lemma~3.3]{Markfelder24}, which is necessary to prove Props.~\ref{prop:ci-nonuniqueness} and \ref{prop:ci-data}. To this end, let us first recall the definition of the wave cone from \cite{Markfelder24} and \cite{BouMarTit26pre}.

\begin{defn}[See {\cite[Sect.~3.1]{Markfelder24}} and {\cite[Defn.~2.7]{BouMarTit26pre}}] \label{defn:wave-cone}
	The wave cone $\Lambda\subset \phase$ for the linear system \eqref{eq:eulerlin-mass}-\eqref{eq:eulerlin-energy} reads
	$$
		\Lambda:= \left\{ (\vm,\mU,q,\vF)\in \phase \, \Big|\, \exists \veta=(\eta_t,\veta_\vx)\in \R^{1+n} \text{ with } \veta_\vx\neq 0 \text{ and } \left( \begin{array}{cc} 0 & \vm^\trans \\ \vm & \mU + q\id \\ \tfrac{n}{2} q & \vF^\trans \end{array} \right) \cdot \veta = 0\right\}.
	$$
\end{defn}

Now we are ready to state the 3-D version of \cite[Lemma~3.3]{Markfelder24}.

\begin{lemma} \label{lemma:suitable-operator-3D}
	Let $n=3$ and $(\vm,\mU,q,\vF)\in \Lambda$. Then there exists a third order homogeneous differential operator 
	$$
		\opL_{(\vm,\mU,q,\vF)} : C^\infty (\R^4) \to C^\infty(\R^4;\phase)
	$$
	with the following two properties: 
	\begin{enumerate}
		\item \label{item:suitable-operator-3D-a} For any function $g\in C^\infty(\R^4)$, $\opL_{(\vm,\mU,q,\vF)}[g]$ solves the homogeneous linear system 
		\begin{align} 
			\Div \vm &= 0 ,  \label{eq:eulerlin-hom-mass} \\
			\partial_t \vm + \Div (\mU + q\id) &= 0 , \label{eq:eulerlin-hom-mom} \\
			\tfrac{3}{2} \partial_t q + \Div \vF &= 0. \label{eq:eulerlin-hom-energy} 
		\end{align}
		
		\item \label{item:suitable-operator-3D-b} If we set $g(t,\vx):= h( (t,\vx)\cdot \veta)$ with an arbitrary function $h\in C^\infty(\R)$ and where $\veta$ corresponds to $(\vm,\mU,q,\vF)\in \Lambda$, we obtain
		\begin{equation} \label{eq:lin-op-g-special-form}
			\opL_{(\vm,\mU,q,\vF)}[g](t,\vx) = (\vm,\mU,q,\vF) h'''((t,\vx)\cdot \veta).
		\end{equation}
	\end{enumerate}
\end{lemma}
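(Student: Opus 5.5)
The operator will be built from potentials. We write $\vm$, $\mU+q\id$ and $\vF$ as derivatives of a scalar $g$ so that the three homogeneous equations hold identically, and we choose the coefficients so that the plane-wave ansatz gives back the prescribed state. The key observation is that the linear system \eqref{eq:eulerlin-hom-mass}--\eqref{eq:eulerlin-hom-energy} is a set of divergence-type equations in space-time. Let $\mS:=\mU+q\id$, and fix $(\vm,\mU,q,\vF)\in\Lambda$ with its $\veta=(\eta_t,\veta_\vx)$, $\veta_\vx\neq0$. The wave cone condition reads
\[
\vm\cdot\veta_\vx=0,\qquad \eta_t\vm+\mS\veta_\vx=0,\qquad \tfrac32 q\,\eta_t+\vF\cdot\veta_\vx=0 .
\]

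\textbf{Step 1 (mass and momentum).} Since $\veta_\vx\neq0$, we pick a basis completing $\veta_\vx$. We treat the $4\times4$ symmetric space-time matrix
\[
\mathbb{A}:=\begin{pmatrix}0&\vm^\trans\\ \vm&\mS\end{pmatrix},
\]
whose space-time divergence encodes \eqref{eq:eulerlin-hom-mass}--\eqref{eq:eulerlin-hom-mom}; the wave cone says exactly $\mathbb{A}\veta=0$. We seek a constant-coefficient third-order operator of the form
\[
\mathbb{A}[g]_{ij}=\sum \mathcal{C}_{ijk\ell}\,\partial_k\partial_\ell\,\partial_{\veta'} g ,
\]
with $\mathcal{C}$ having the antisymmetry of a double curl, $\mathcal{C}_{ijk\ell}=X_{ik}X_{j\ell}$-type combinations built from antisymmetric tensors annihilating $\veta$. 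This makes the space-time divergence vanish identically, in the spirit of the standard potential $\mathbb{A}=\mathrm{curl}\,\mathrm{curl}\,\Phi$ for divergence-free symmetric matrices. Concretely, since $\mathbb{A}$ is symmetric with $\mathbb{A}\veta=0$, we can decompose $\mathbb{A}=\sum_a \mu_a\, \vw_a\otimes\vw_a$ with $\vw_a\perp\veta$. For each rank-one piece we use $\vw\otimes\vw\,h''$, realised as a contraction with $\veta$-orthogonal antisymmetric $2$-forms $\omega$ satisfying $\omega\veta\propto\vw$. Any remaining first-order factor is a derivative in a direction $\vec{e}$ with $\vec{e}\cdot\veta=1$, which raises the order to three. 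Finally we must enforce the constraint that the $(0,0)$ entry vanishes. This requires choosing the $\omega$'s with zero time-time contribution, which is possible because $\vm\cdot\veta_\vx=0$ and $\eta_t\vm=-\mS\veta_\vx$. This is where dimension $n=3$ enters, since a suitable basis of $\veta$-orthogonal forms exists.

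\textbf{Step 2 (energy).} Given $q[g]$ from Step 1, we need $\vF[g]$ with $\Div\vF[g]=-\tfrac32\partial_t q[g]$ and plane-wave symbol $\vF$. We set
\[
\vF[g]:=\vec{a}\,(\text{third-order term})+\mathrm{curl}(\dots),
\]
as follows. Choose a constant vector $\vec{b}\in\R^3$ with $\vec{b}\cdot\veta_\vx=1$, and define $\vF[g]:=-\tfrac32\vec{b}\,\partial_t\mathcal{Q}[g]+\mathrm{curl}\,\vec{G}[g]$, where $q[g]=\vec{b}\cdot\Grad\,\mathcal{Q}[g]$-type structure is arranged, or more simply we use the exact-divergence part. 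The correction $\mathrm{curl}\,\vec{G}[g]$, with $\vec{G}=\vec{c}\,\partial^2 g$ and $\vec{c}$ chosen via $\veta_\vx\times\vec{c}=\vF+\tfrac32 q\eta_t\vec{b}$, adds the component of $\vF$ orthogonal to $\veta_\vx$. That orthogonal component is solvable precisely because $\big(\vF+\tfrac32 q\eta_t\vec{b}\big)\cdot\veta_\vx=0$ by the third cone relation.

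\textbf{Step 3 (verification).} Property \ref{item:suitable-operator-3D-a} follows by commuting derivatives: each block is a divergence of an antisymmetric object or a curl. For property \ref{item:suitable-operator-3D-b} we substitute $g=h(\cdot\cdot\veta)$, so that every $\partial_k\partial_\ell\partial_m$ becomes $\eta_k\eta_\ell\eta_m h'''$, and the normalisations give back $(\vm,\mU,q,\vF)$. The main obstacle is Step 1: producing an explicit double-curl potential in $\R^4$ that reproduces the prescribed $\mathbb{A}$ while keeping the $(0,0)$ entry zero and $\mU$ trace-free. I would first try explicitly, splitting into the cases $\eta_t=0$ and $\eta_t\neq0$ and rotating so that $\veta_\vx=|\veta_\vx|\vec{e}_3$, then writing the coefficients by hand as in the 2-D proof of \cite[Lemma~3.3]{Markfelder24}.
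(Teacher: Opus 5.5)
Your overall scheme is viable, and your Step~2 is essentially what the paper does for the energy flux. But Step~1 carries the whole content of the lemma, and it is only asserted: you end by deferring it to an explicit computation. Moreover, the mechanism you give for the time--time entry does not work as stated.

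For a single antisymmetric $\omega\in\R^{4\times 4}$ with time row $(0,\vec{r}^\trans)$, the block $\omega(\nabla^2_{t,\vx}\partial_{\vec{e}}g)\omega^\trans$ has $(0,0)$ entry $(\vec{r}\cdot\Grad)^2\partial_{\vec{e}}g$. This vanishes identically only if $\vec{r}=0$, and then $(\omega\veta)_0=\vec{r}\cdot\veta_\vx=0$. Since the first column of $\mathbb{A}$ is $(0,\vm)$, some $\vw_a$ have $(w_a)_0\neq 0$ as soon as $\vm\neq 0$. So the $\omega_a$ cannot be chosen one by one with zero time--time contribution; the vanishing must come from cancellation between the pieces. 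What makes that cancellation possible is $\mathbb{A}_{00}=\sum_a\mu_a (w_a)_0^2=0$, not the relations $\vm\cdot\veta_\vx=0$ and $\eta_t\vm=-\mS\veta_\vx$ you cite (these only give $\vw_a\perp\veta$). Also, forms ``annihilating $\veta$'' would give zero symbol; you need $\omega\veta=\vw$. Similarly, Step~2 presupposes without proof that $q[g]=\tfrac13\tr\mS[g]$ is a spatial divergence; in your formulation it must even be $\vec{b}\cdot\Grad$ of a second-order expression.

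Both points can be repaired within your framework. Fix $\vec{b}\in\R^3$ with $\vec{b}\cdot\veta_\vx=1$ and $\vec{e}\in\R^4$ with $\vec{e}\cdot\veta=1$. Give $\omega_a$ the time row $(w_a)_0(0,\vec{b}^\trans)$ and a spatial block $\Omega_a$ with $\Omega_a\veta_\vx=(\vw_a)_\vx+\eta_t(w_a)_0\vec{b}$. This is solvable because the right-hand side is orthogonal to $\veta_\vx$, by $\vw_a\cdot\veta=0$. Then:
\begin{itemize}
\item $\omega_a\veta=\vw_a$;
\item $\sum_a\mu_a\,\omega_a(\nabla^2_{t,\vx}\partial_{\vec{e}}g)\omega_a^\trans$ is symmetric and row-wise divergence free;
\item its $(0,0)$ entry is $\mathbb{A}_{00}(\vec{b}\cdot\Grad)^2\partial_{\vec{e}}g=0$, and it equals $\mathbb{A}h'''$ on plane waves;
\item the $\partial_t^2$-part of its spatial trace is $\mathbb{A}_{00}|\vec{b}|^2\partial_t^2\partial_{\vec{e}}g=0$, so $q[g]=\Div\mathcal{G}[g]$ for a second-order $\mathcal{G}$.
\end{itemize}
Your Step~2 then goes through with $-\tfrac32\partial_t\mathcal{G}[g]$ in place of $-\tfrac32\vec{b}\,\partial_t\mathcal{Q}[g]$, the curl correction being solvable by the third cone relation. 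Nothing here uses $n=3$ except the curl, so your remark about where the dimension enters is off.

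The paper takes the route of your fallback. It rotates so that $\veta=(a,b,0,0)$ with $b\neq0$, with no case distinction in $\eta_t$. It then writes every component of $\opL$ explicitly and verifies (a) and (b) by hand using $m_1=U_{11}+q=0$, $U_{1j}=-\tfrac{a}{b}m_j$ and $F_1=-\tfrac{3a}{2b}q$. There $\opL_q[g]$ is $\partial_1$ of an explicit second-order expression, $\opL_{F_1}$ contains $-\tfrac32\partial_t$ of the same expression, and the $F_2,F_3$ terms form a divergence-free correction. That is precisely your Step~2.
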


The proof is similar to the 2-D case presented in \cite[Sect.~3.2]{Markfelder24}.

\begin{proof}
According to Defn.~\ref{defn:wave-cone}, there exists $\veta=(\eta_t,\veta_\vx)\in \R^4$ with $\veta_\vx\neq 0$ and 
\begin{equation} \label{eq:op-lambda}
	\left(\begin{array}{cc} 0 & \vm^\trans \\ \vm & \mU + q\id \\ \tfrac{3}{2} q & \vF^\trans\end{array}\right)\cdot\veta = 0 .
\end{equation}

We first consider the case that $\veta$ is of the form $\veta=(a,b,0,0)$, $b\neq 0$. Define 
\begin{align*} 
	\opL_{m_1}[g] &:= - \frac{1}{b^3} \Big( m_2 \parthree{1}{1}{2}g + m_3 \parthree{1}{1}{3}g \Big) ,\\
	\opL_{m_2}[g] &:= \frac{m_2}{b^3} \parthree{1}{1}{1}g ,\\
	\opL_{m_3}[g] &:= \frac{m_3}{b^3} \parthree{1}{1}{1}g ,
\end{align*}
\begin{align*} 
	\opL_q[g] &:= \frac{1}{3 b^3} \Big( 2 m_2 \parthree{t}{1}{2}g + 2 m_3 \parthree{t}{1}{3}g + \big(U_{22} + q\big) \big( \parthree{1}{1}{1}g + \parthree{1}{2}{2}g \big) \\
	&\qquad\qquad + \big(U_{33} + q\big) \big( \parthree{1}{1}{1}g + \parthree{1}{3}{3}g \big) + 2 U_{23} \parthree{1}{2}{3}g \Big) ,
\end{align*}
\begin{align*} 
	\opL_{U_{11}}[g] &:= \frac{1}{b^3} \Big( 2 m_2 \parthree{t}{1}{2}g + 2 m_3 \parthree{t}{1}{3}g + \big( U_{22} + q \big) \parthree{1}{2}{2}g + \big( U_{33} + q \big) \parthree{1}{3}{3}g + 2 U_{23} \parthree{1}{2}{3}g \Big) - \opL_q[g] ,\\  
	\opL_{U_{12}}[g] &:= - \frac{1}{b^3} \Big( m_2 \parthree{t}{1}{1}g + \big( U_{22} + q \big) \parthree{1}{1}{2}g + U_{23} \parthree{1}{1}{3}g \Big) ,\\
	\opL_{U_{13}}[g] &:= - \frac{1}{b^3} \Big( m_3 \parthree{t}{1}{1}g + \big( U_{33} + q \big) \parthree{1}{1}{3}g + U_{23} \parthree{1}{1}{2}g \Big) ,\\ 
	\opL_{U_{22}}[g] &:= \frac{U_{22} + q}{b^3} \parthree{1}{1}{1}g - \opL_q[g] ,\\ 
	\opL_{U_{23}}[g] &:= \frac{U_{23}}{b^3} \parthree{1}{1}{1}g ,\\ 
	\opL_{U_{33}}[g] &:= \frac{U_{33} + q}{b^3} \parthree{1}{1}{1}g - \opL_q[g] ,
\end{align*}
\begin{align*} 
	\opL_{F_1}[g] &:= - \frac{1}{2b^3} \Big( 2 m_2 \parthree{t}{t}{2}g + 2 m_3 \parthree{t}{t}{3}g + \big(U_{22} + q\big) \big( \parthree{t}{1}{1}g + \parthree{t}{2}{2}g \big) \\
	&\qquad\qquad + \big(U_{33} + q\big) \big( \parthree{t}{1}{1}g + \parthree{t}{3}{3}g \big) + 2 U_{23} \parthree{t}{2}{3}g \Big) - \frac{F_2}{b^3} \parthree{1}{1}{2}g - \frac{F_3}{b^3} \parthree{1}{1}{3}g , \\
	\opL_{F_2}[g] &:= \frac{F_2}{b^3} \parthree{1}{1}{1}g , \\
	\opL_{F_3}[g] &:= \frac{F_3}{b^3} \parthree{1}{1}{1}g .
\end{align*}
A simple computation reveals that 
$$
	\opL_{U_{11}}[g] + \opL_{U_{22}}[g] + \opL_{U_{33}}[g] = 0, 
$$
i.e.~$\opL_{\mU}[g]$ is traceless as desired.

It is then straightforward to check that $\opL_{(\vm,\mU,q,\vF)}[g]$ as defined above solves the linear system \eqref{eq:eulerlin-hom-mass}-\eqref{eq:eulerlin-hom-energy} for any $g\in C^\infty(\R^4)$, i.e.~item~\ref{item:suitable-operator-3D-a} of Lemma~\ref{lemma:suitable-operator-3D} is satisfied. 

Now let $g(t,\vx):= h\big((t,\vx)\cdot \veta\big)= h(at + bx)$. Then we obtain
\begin{align*} 
	\opL_{m_1}[g] &= 0 ,\\
	\opL_{m_2}[g] &= m_2 h'''(at + bx) ,\\
	\opL_{m_3}[g] &= m_3 h'''(at + bx) , \\
	\opL_q[g] &= \frac{1}{3} \big(U_{22} + q + U_{33} + q\big) h'''(at + bx) , \\
	\opL_{U_{11}}[g] &= - q h'''(at + bx) ,\\  
	\opL_{U_{12}}[g] &= - \frac{a}{b} m_2 h'''(at + bx) ,\\
	\opL_{U_{13}}[g] &=  - \frac{a}{b} m_3 h'''(at + bx) ,\\ 
	\opL_{U_{22}}[g] &= U_{22} h'''(at + bx) ,\\ 
	\opL_{U_{23}}[g] &= U_{23} h'''(at + bx) ,\\ 
	\opL_{U_{33}}[g] &= U_{33} h'''(at + bx) , \\
	\opL_{F_1}[g] &:= - \frac{a}{2b} \big(U_{22} + q + U_{33} + q\big) h'''(at + bx) , \\
	\opL_{F_2}[g] &:= F_2 h'''(at + bx) , \\
	\opL_{F_3}[g] &:= F_3 h'''(at + bx) .
\end{align*}

Keeping \eqref{eq:op-lambda} in mind, i.e.
\begin{align*}
	b\,m_1 &=0, \\
	a\,m_1 + b\,(U_{11}+q) &=0, \\
	a\,m_2 + b\, U_{12} &=0, \\
	a\,m_3 + b\, U_{13} &=0, \\
	\frac{3a}{2}q + b\, F_1 &=0,
\end{align*}
where $b\neq 0$, we find $m_1= U_{11} + q = 0$ and 
$$
	U_{12} = - \frac{a}{b} m_2, \qquad U_{13} = - \frac{a}{b} m_3, \qquad F_1 = - \frac{3a}{2b}q.
$$
This allows to verify \eqref{eq:lin-op-g-special-form}, i.e.~item~\ref{item:suitable-operator-3D-b} of Lemma~\ref{lemma:suitable-operator-3D} is satisfied. 

Now we consider the general case $\veta=(\eta_t,\veta_\vx)\in \R^4$ with $\veta_\vx\neq 0$. We proceed in a similar fashion as in \cite[Sect.~4.4.3]{Markfelder}. Let $\mA\in \R^{3\times 3}$ be an orthogonal matrix whose first column is equal to $\frac{\veta_\vx}{|\veta_\vx|}$. Define $(\ov{\vm},\ov{\mU}, \ov{q}, \ov{\vF})\in \phase$ by 
\begin{align} \label{eq:app-A-001}
	\ov{\vm} &:= \mA^\trans \vm, &
	\ov{\mU} &:= \mA^\trans \mU \mA, &
	\ov{q} &:= q, &
	\ov{\vF} &:= \mA^\trans \vF,
\end{align}
and 
$$
	\ov{\veta}= (\ov{\eta}_t, \ov{\veta}_\vx) :=(\eta_t, \mA^\trans \veta_\vx).
$$
It is then simple to check that 
\begin{equation*} 
	\left(\begin{array}{cc} 0 & \ov{\vm}^\trans \\ \ov{\vm} & \ov{\mU} + \ov{q} \id \\ \tfrac{3}{2} \ov{q} & \ov{\vF}^\trans\end{array}\right)\cdot\ov{\veta} = 0 . 
\end{equation*}
Moreover, $\ov{\veta}\in \R^4$ is of the form $(a,b,0,0)$ with $b\neq 0$ studied above. Hence we obtain a differential operator $\opL_{(\ov{\vm},\ov{\mU}, \ov{q}, \ov{\vF})}$ satisfying items~\ref{item:suitable-operator-3D-a} and \ref{item:suitable-operator-3D-b} of Lemma~\ref{lemma:suitable-operator-3D} for $(\ov{\vm},\ov{\mU}, \ov{q}, \ov{\vF})\in \phase$ instead of $(\vm,\mU,q,\vF)\in \phase$. 

Next, we set 
\begin{align} 
	\opL_{\vm}[g](t,\vx) &:= \mA \opL_{\ov{\vm}}[g](t,\mA^\trans \vx), &
	\opL_{\mU}[g](t,\vx) &:= \mA \opL_{\ov{\mU}}[g](t,\mA^\trans \vx) \mA^\trans, \label{eq:app-A-002} \\
	\opL_{q}[g](t,\vx) &:= \opL_{\ov{q}}[g](t,\mA^\trans \vx), &
	\opL_{\vF}[g](t,\vx) &:= \mA \opL_{\ov{\vF}}[g](t,\mA^\trans \vx). \label{eq:app-A-003}
\end{align}
It straightforward to check that $\opL_{(\vm,\mU,q,\vF)}$ solves \eqref{eq:eulerlin-hom-mass}-\eqref{eq:eulerlin-hom-energy} for any $g\in C^\infty(\R^4)$, i.e.~item~\ref{item:suitable-operator-3D-a} of Lemma~\ref{lemma:suitable-operator-3D} holds. For $g(t,\vx):= h( (t,\vx)\cdot \veta) = h( (t,\mA^\trans \vx)\cdot \ov{\veta})$ we obtain 
\begin{equation} \label{eq:app-A-004}
	\opL_{(\ov{\vm},\ov{\mU}, \ov{q}, \ov{\vF})}[g](t,\mA^\trans \vx) = (\ov{\vm},\ov{\mU}, \ov{q}, \ov{\vF}) h'''( (t,\mA^\trans \vx)\cdot \ov{\veta}) = (\ov{\vm},\ov{\mU}, \ov{q}, \ov{\vF}) h'''((t,\vx)\cdot \veta).
\end{equation}
Combining \eqref{eq:app-A-001}, \eqref{eq:app-A-002}, \eqref{eq:app-A-003} and \eqref{eq:app-A-004}, we infer item~\ref{item:suitable-operator-3D-b} of Lemma~\ref{lemma:suitable-operator-3D} as desired. 
\end{proof}

\section{A three-dimensional version of \cite[Prop.~3.9]{Markfelder24}} \label{sec:app-U} 

The goal of this section is to prove a three-dimensional analogue of \cite[Prop.~3.9]{Markfelder24}, i.e.~$\sW_{\rho,Q}\subset \sU_{\rho,Q}$ with a set $\sW_{\rho,Q}$ to be defined below, see \eqref{eq:WrhoQ-2d} above for the two-dimensional version. Before we go on, let us state and prove the following lemma, which we will need later on. 

\begin{lemma} \label{lemma:definiteness-remains-for-sum}
	Let $\mM\in \sym{3}$ be a symmetric, negative definite matrix, and $\vsigma=(\sigma_1,\sigma_2,\sigma_3)\in \R^3\setminus\{0\}$. Then 
	$$
		\mM - \frac{1}{\vsigma^\trans\cdot \mM^{-1} \cdot \vsigma} \vsigma \otimes \vsigma
	$$
	is negative semi-definite. 
\end{lemma}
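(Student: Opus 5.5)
The plan is to reduce the claim to a positive definite statement and then apply the Cauchy--Schwarz inequality in the inner product induced by that matrix. Since $\mM$ is symmetric and negative definite, $\mS:=-\mM$ is symmetric positive definite. Hence $\mS^{-1}$ is positive definite and $\vsigma^\trans\cdot \mM^{-1}\cdot\vsigma = -\vsigma^\trans \mS^{-1}\vsigma<0$ for $\vsigma\neq 0$. In particular the scalar coefficient is well defined, and the matrix in question equals
$$
	-\left(\mS - \frac{1}{\vsigma^\trans \mS^{-1}\vsigma}\,\vsigma\otimes\vsigma\right).
$$
It therefore suffices to show that $\mS - \frac{\vsigma\otimes\vsigma}{\vsigma^\trans \mS^{-1}\vsigma}$ is positive semi-definite. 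Equivalently, for every $\xi\in\R^3$ we need
$$
	(\xi\cdot\vsigma)^2 \leq (\xi^\trans \mS\,\xi)\,(\vsigma^\trans \mS^{-1}\vsigma).
$$

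To prove this inequality, use the symmetric positive definite square root $\mS^{1/2}$ and write $\xi\cdot\vsigma = (\mS^{1/2}\xi)\cdot(\mS^{-1/2}\vsigma)$. Cauchy--Schwarz in the Euclidean inner product gives
$$
	(\xi\cdot\vsigma)^2\le |\mS^{1/2}\xi|^2\,|\mS^{-1/2}\vsigma|^2 = (\xi^\trans\mS\xi)(\vsigma^\trans\mS^{-1}\vsigma).
$$
One could avoid the square root by applying Cauchy--Schwarz to the bilinear form $\langle u,v\rangle := u^\trans\mS^{-1}v$ with $u=\mS\xi$ and $v=\vsigma$. Multiplying the resulting inequality by the negative factor $-1/(\vsigma^\trans\mS^{-1}\vsigma)$ and rearranging yields $\xi^\trans\big(\mM - \frac{\vsigma\otimes\vsigma}{\vsigma^\trans\mM^{-1}\vsigma}\big)\xi\le 0$.

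There is no real obstacle. The only points needing care are the sign bookkeeping, because the denominator $\vsigma^\trans\mM^{-1}\vsigma$ is negative, and the remark that $\vsigma\neq0$ guarantees it is nonzero. The dimension $3$ plays no role, so the argument works verbatim in any dimension. The equality case $\xi\parallel\mS^{-1}\vsigma$ also shows the matrix is singular, so semi-definiteness, not definiteness, is the sharp statement.
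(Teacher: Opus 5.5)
Your proof is correct, and it takes a genuinely different route from the paper.

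The paper first diagonalizes $\mM=\diag(\lambda_1,\lambda_2,\lambda_3)$ by an orthogonal change of basis. It then carries out a spectral computation specific to $3\times 3$ matrices:
\begin{itemize}
\item it uses the matrix determinant lemma to get $\det\mA=0$, so one eigenvalue of $\mA$ vanishes;
\item it derives explicit formulas showing $\tr\mA<0$ and $(\tr\mA)^2-\tr\mA^2>0$, partly leaving the algebra to the reader;
\item it concludes that the two roots of the remaining quadratic factor of the characteristic polynomial have negative sum and positive product, hence are negative.
\end{itemize}

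You instead prove the quadratic-form inequality $(\xi\cdot\vsigma)^2\le(-\xi^\trans\mM\xi)(-\vsigma^\trans\mM^{-1}\vsigma)$ directly, by Cauchy--Schwarz in the inner product induced by $-\mM$. Your sign bookkeeping with the negative denominator $\vsigma^\trans\mM^{-1}\vsigma$ is correct.

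What your route buys:
\begin{itemize}
\item It is shorter and avoids both diagonalization and the characteristic polynomial.
\item It is dimension-free, so it would also cover the two-dimensional analogue used in the cited earlier work.
\item The equality case of Cauchy--Schwarz gives exactly what the paper's eigenvalue analysis yields. For a positive semi-definite matrix, $\xi^\trans B\xi=0$ forces $B\xi=0$, so the kernel of the matrix is precisely the line spanned by $\mM^{-1}\vsigma$, and all other eigenvalues are strictly negative.
\end{itemize}

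The paper's computation therefore gives nothing beyond your argument. In any case, only semi-definiteness is used later, in the proof of Lemma~\ref{lemma:WsubsetQ-step2}.
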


\begin{proof}
In order to keep the presentation short, we use the abbreviation 
$$
	\mA := \mM - \frac{1}{\vsigma^\trans\cdot \mM^{-1} \cdot \vsigma} \vsigma \otimes \vsigma.
$$

Let us first assume that $\mM=\diag(\lambda_1,\lambda_2,\lambda_3)$. As $\mM$ is negative definite, we know that $\lambda_i<0$ for all $i=1,2,3$. Moreover, $\mM^{-1}= \diag(\lambda_1^{-1},\lambda_2^{-1},\lambda_3^{-1})$. Hence, in this case, $\mA$ reads
$$
	\mA = \diag(\lambda_1,\lambda_2,\lambda_3) - \frac{1}{\lambda_1^{-1} (\sigma_1)^2 + \lambda_2^{-1} (\sigma_2)^2 + \lambda_3^{-1} (\sigma_3)^2} \vsigma\otimes \vsigma. 
$$ 

It is well-known that the characteristic polynomial of a $3\times 3$-matrix $\mA$ is given by
$$
	\chi_\mA(s) = s^3 - (\tr \mA) s^2 + \half \big((\tr \mA)^2 - \tr \mA^2 \big)s - \det \mA,
$$
see e.g.~\cite[Fact~4.9.2]{Bernstein}). Using \cite[Fact 2.16.3]{Bernstein}, we find $\det \mA = 0$. Consequently, one eigenvalue of $\mA$ is zero, and the other two eigenvalues are given by the zeros of the quadratic polynomial
\begin{equation} \label{eq:app-quad-poly}
	s^2 - (\tr \mA) s + \half \big((\tr \mA)^2 - \tr \mA^2 \big). 
\end{equation}
It is not difficult to see that if 
\begin{equation} \label{eq:app-trace}
	\tr \mA < 0, \qquad \text{ and }\qquad (\tr \mA)^2 - \tr \mA^2 > 0,
\end{equation}
then the zeros of the polynomial \eqref{eq:app-quad-poly} are negative. Hence, it suffices to show \eqref{eq:app-trace}.

We compute
\begin{align*}
	\tr \mA &= \lambda_1+\lambda_2+\lambda_3 - \frac{(\sigma_1)^2 + (\sigma_2)^2 + (\sigma_3)^2}{\lambda_1^{-1} (\sigma_1)^2 + \lambda_2^{-1} (\sigma_2)^2 + \lambda_3^{-1} (\sigma_3)^2} \\
	&= \frac{1}{\lambda_1^{-1} (\sigma_1)^2 + \lambda_2^{-1} (\sigma_2)^2 + \lambda_3^{-1} (\sigma_3)^2} \\
	&\qquad \Big( \big(\lambda_1^{-1} (\sigma_1)^2 + \lambda_2^{-1} (\sigma_2)^2 + \lambda_3^{-1} (\sigma_3)^2\big) (\lambda_1+\lambda_2+\lambda_3) - (\sigma_1)^2 - (\sigma_2)^2 - (\sigma_3)^2 \Big) \\
	&= \frac{1}{\lambda_1^{-1} (\sigma_1)^2 + \lambda_2^{-1} (\sigma_2)^2 + \lambda_3^{-1} (\sigma_3)^2} \\
	&\qquad \left( \frac{\lambda_1}{\lambda_2} (\sigma_2)^2 + \frac{\lambda_1}{\lambda_3} (\sigma_3)^2+ \frac{\lambda_2}{\lambda_1} (\sigma_1)^2+ \frac{\lambda_2}{\lambda_3} (\sigma_3)^2+\frac{\lambda_3}{\lambda_1} (\sigma_1)^2+\frac{\lambda_3}{\lambda_2} (\sigma_2)^2\right)\ <\ 0, 
\end{align*}
since $\lambda_i<0$ for all $i=1,2,3$. One can find similarly that
\begin{align*}
	(\tr \mA)^2 - \tr \mA^2 &= \frac{2}{\lambda_1^{-1} (\sigma_1)^2 + \lambda_2^{-1} (\sigma_2)^2 + \lambda_3^{-1} (\sigma_3)^2} \\
	&\qquad \left( \frac{\lambda_1\lambda_3}{\lambda_2} (\sigma_2)^2 + \frac{\lambda_1\lambda_2}{\lambda_3} (\sigma_3)^2+ \frac{\lambda_2\lambda_3}{\lambda_1} (\sigma_1)^2\right)\ >\ 0, 
\end{align*}
the details of which we leave to the reader. So \eqref{eq:app-trace} is shown.

It remains to consider the general case where $\mM$ is not necessarily diagonal. As $\mM$ is symmetric and negative definite, there exists an orthogonal matrix $\mS$ such that $\mS^\trans \cdot \mM \cdot \mS = \diag(\lambda_1,\lambda_2,\lambda_3)$ and $\mS^\trans \cdot \mM^{-1} \cdot \mS = \diag(\lambda_1^{-1},\lambda_2^{-1},\lambda_3^{-1})$. What we already proved above implies 
\begin{align*}
	\mS^\trans \cdot \mA \cdot \mS &= \mS^\trans \cdot \mM \cdot \mS - \frac{1}{\vsigma^\trans\cdot \mM^{-1} \cdot \vsigma} (\mS^\trans\cdot\vsigma) \otimes(\mS^\trans\cdot \vsigma) \\
	&= \mS^\trans \cdot \mM \cdot \mS - \frac{1}{(\mS^\trans\cdot \vsigma)^\trans \cdot (\mS^\trans \cdot \mM^{-1} \cdot \mS) \cdot (\mS^\trans \cdot \vsigma)} (\mS^\trans\cdot\vsigma) \otimes(\mS^\trans\cdot \vsigma)\ \leq \ 0,
\end{align*}
which leads to $\mA\leq 0$ as desired, see e.g.~\cite[Prop.~8.1.2~(xiv)]{Bernstein}.
\end{proof}

Let us next construct $\sW_{\rho,Q}$, which will be done in a similar fashion as in 2-D. Similarly to the two-dimensional case, we begin by defining eight vectors $\vsigma^j\in \R^3$, $j=1,2,...,8$:
\begin{align*}
	\vsigma^1 &:= \left(\begin{array}{r} \hphantom{-}1 \\ 1 \\ 1 \end{array} \right), & \vsigma^2 &:= \left(\begin{array}{r} -1 \\ -1 \\ 1 \end{array} \right), & \vsigma^3 &:= \left(\begin{array}{r} 1 \\ -1 \\ 1 \end{array} \right), & \vsigma^4 &:= \left(\begin{array}{r} -1 \\ 1 \\ 1 \end{array} \right), \\
	\vsigma^5 &:= \left(\begin{array}{r} 1 \\ 1 \\ -1 \end{array} \right), & \vsigma^6 &:= \left(\begin{array}{r} -1 \\ -1 \\ -1 \end{array} \right), & \vsigma^7 &:= \left(\begin{array}{r} 1 \\ -1 \\ -1 \end{array} \right), & \vsigma^8 &:= \left(\begin{array}{r} -1 \\ 1 \\ -1 \end{array} \right). 
\end{align*}
For given $\rho,Q>0$, we set 
\begin{align*} 
	\sV_{\rho,Q} := \bigg\{ (\vm,\mU,q) \in \R^3\times \symz{3} \times \R \,\Big|\, & \bullet \ \frac{\vm\otimes\vm}{\rho} + p(\rho)\id - \mU -q \id< 0, \\
	& \bullet \ q< Q \quad \bigg\} . 
\end{align*}
Moreover, we define auxiliary functions $A^j_{\rho,Q}, r^j_{\rho,Q}:\sV_{\rho,Q} \to \R$ and $\vf_{\rho,Q}^j:\sV_{\rho,Q} \to \R^3$ for $j=1,2,...,8$ by 
\begin{align*}
	A_{\rho,Q}^j(\vm,\mU,q) &:= \frac{-1}{(\vsigma^j)^\trans \cdot \left(\frac{\vm\otimes \vm}{\rho} - \mU + (p(\rho) - q)\id \right)^{-1}\cdot \vsigma^j} , \\
	r_{\rho,Q}^j(\vm,\mU,q) &:= \frac{1}{2 (Q-q)} \left( - \vm \cdot \vsigma^j + \sqrt{(\vm \cdot \vsigma^j)^2 + 4\rho A_{\rho,Q}^j(\vm,\mU,q) + 4\rho (Q-q)} \right) , \\
	\vf_{\rho,Q}^j(\vm,\mU,q) &:= \frac{A_{\rho,Q}^j(\vm,\mU,q)}{r_{\rho,Q}^j(\vm,\mU,q)} \vsigma^j .
\end{align*}
Note that 
\begin{equation} \label{eq:app-A>0}
	A_{\rho,Q}^j(\vm,\mU,q) > 0 \qquad \text{ for all } (\vm,\mU,q)\in \sV_{\rho,Q} \text{ and all }j=1,2,...,8,
\end{equation}
because every negative definite matrix is invertible and its inverse is again negative definite. It is then simple to deduce that also 
\begin{equation} \label{eq:app-r>0}
	r_{\rho,Q}^j(\vm,\mU,q) > 0 \qquad \text{ for all } (\vm,\mU,q)\in \sV_{\rho,Q} \text{ and all }j=1,2,...,8,
\end{equation}
and hence $\vf_{\rho,Q}^j$ is well-defined for all $(\vm,\mU,q)\in \sV_{\rho,Q}$ and all $j=1,2,...,8$. 

\begin{rem} 
	Using Cramer's rule $\mM^{-1}= \frac{\adj(\mM)}{\det (\mM)}$, where $\adj(\mM)$ denotes the adjugate matrix of $\mM$, one can easily verify that the above expression for $A^j_{\rho,Q}$ coincides in two space dimensions with the expression for $A^j_{\rho,Q}$ used in \cite{Markfelder24}, see also Sect.~\ref{subsec:ci-U} above. 
\end{rem}

Now, we are ready to define the sets $\sW_{\rho,Q}$ as
\begin{align*} 
	\sW_{\rho,Q} := \Bigg\{ (\vm,\mU,q,\vF) \in \phase\,\Big|\, & \bullet \ \frac{\vm\otimes\vm}{\rho} + p(\rho)\id - \mU -q \id< 0,
	\\
	& \bullet \ \exists\,\kappa_j\in \R^+, j=1,2,...,8, \text{ with } \ \sum_{j=1}^8\kappa_j =1\ \text{ such that } \\
	&\qquad \vF - \frac{\frac{3}{2} \big(q - p(\rho)\big) + P(\rho) + p(\rho)}{\rho} \vm = \sum_{j=1}^8\kappa_j \vf^j_{\rho,Q}(\vm,\mU,q), \\
	& \bullet \ q< Q \quad \Bigg\} . 
\end{align*}
Similarly to \cite[Prop.~3.9]{Markfelder24} the following holds.

\begin{prop} \label{prop:WsubsetU-3D}
	It holds that $\sW_{\rho,Q}\subset \sU_{\rho,Q}$. 
\end{prop}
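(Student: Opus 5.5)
We show that every point $z=(\vm,\mU,q,\vF)\in \sW_{\rho,Q}$ is a convex combination of points of $\sK_{\rho,Q}$. Since $\sW_{\rho,Q}$ is open (all defining conditions are strict, and $\vf^j_{\rho,Q}$ depend continuously on $(\vm,\mU,q)$ with the eight $\vsigma^j$ spanning a polytope containing $0$ in its interior), this gives $\sW_{\rho,Q}\subset \interior{\big((\sK_{\rho,Q})^\co\big)}=\sU_{\rho,Q}$.

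We follow \cite[Sect.~3.4]{Markfelder24}. Set $\mM:=\frac{\vm\otimes\vm}{\rho}-\mU+(p(\rho)-q)\id<0$ and $\vF_0:=\frac{\frac32(q-p(\rho))+P(\rho)+p(\rho)}{\rho}\vm$. For each $j$ we look for a point $z_j=(\vm_j,\mU_j,q_j,\vF_j)$ with $\vm_j=\vm+\tau\vsigma^j$, where $\tau>0$. We choose $q_j$ such that the trace identity $3(q_j-p(\rho))=|\vm_j|^2/\rho$ holds. We choose $\mU_j$ such that $\frac{\vm_j\otimes\vm_j}{\rho}+p(\rho)\id-\mU_j-q_j\id=0$, and set $\vF_j$ by \eqref{eq:cond-K-2}, so that $z_j$ satisfies \eqref{eq:cond-K-1} and \eqref{eq:cond-K-2}.

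The key idea is to write $z$ as a two-point combination along the segment from $z$ through a point related to $z_j$. Write $z=\mu z_j+(1-\mu)z_j'$, where $z_j'$ lies on the opposite side of the segment. The natural choice, as in 2-D, is to go from $z$ in direction $\vsigma^j$ until the matrix constraint degenerates. Along $\vm+s\vsigma^j$, the quadratic term changes by $\frac{s}{\rho}(\vm\otimes\vsigma^j+\vsigma^j\otimes\vm)+\frac{s^2}{\rho}\vsigma^j\otimes\vsigma^j$. By compensating linearly in $\mU$ and $q$, only the rank-one part $\frac{s\,t}{\rho}\vsigma^j\otimes\vsigma^j$ survives, where $t$ is the length of the opposite step.

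Lemma~\ref{lemma:definiteness-remains-for-sum} shows that $\mM-A^j_{\rho,Q}\vsigma^j\otimes\vsigma^j\le 0$, with exactly one zero eigenvalue; recall $A^j_{\rho,Q}=-1/((\vsigma^j)^\trans\mM^{-1}\vsigma^j)$. Hence $A^j$ is precisely the rank-one amount that can be absorbed while staying in the closure of the negative semidefinite region. Adding the remaining slack $Q-q$ in the trace, so that the endpoint satisfies $q_j\le Q$, yields a quadratic equation for the step length. Its positive root is exactly $r^j_{\rho,Q}$.

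We then compute the $\vF$-component of the resulting barycenter. Because $\vF$ in $\sK$ is cubic-like in $\vm$, the averaged deviation $\vF-\vF_0$ achievable from this configuration equals $\frac{A^j}{r^j}\vsigma^j=\vf^j_{\rho,Q}$; this is where $\gamma$-independent algebra with $P$ and $p$ cancels, as in 2-D.

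Next we show that $(\vm,\mU,q,\vF_0+\vf^j)$ lies in $(\sK_{\rho,Q})^\co$. Concretely, it is a convex combination of a point of $\sK_{\rho,Q}$ (the endpoint with $q_j=Q$) and a point $w_j$ with $\frac{\vm\otimes\vm}{\rho}+p\id-\mU'-q'\id\le0$ and $q'\le Q$ whose $\vF$ equals its equilibrium value. Such $w_j$ lies in $(\sK_{\rho,Q})^\co$, since it has a vanishing eigenvalue direction and, iterating one more splitting in the remaining two-dimensional negative-definite block, reduces to the known characterization of $\{\vF=\vF_0\}\cap\{\text{matrix}\le0,\,q\le Q\}\subset(\sK)^\co$ from \cite[Lemma~3.2]{Markfelder24}, whose proof is dimension independent.

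Finally, since $\vF-\vF_0=\sum_j\kappa_j\vf^j$ and the set $(\sK_{\rho,Q})^\co$ is convex, with all eight points $(\vm,\mU,q,\vF_0+\vf^j)$ in it, we obtain $z\in(\sK_{\rho,Q})^\co$. The main obstacle is the middle step: verifying that the explicit rank-one splitting in 3-D reproduces exactly $\vf^j$. This requires Lemma~\ref{lemma:definiteness-remains-for-sum} to control definiteness of the 3-D matrix, replacing the $2\times2$ determinant argument; the remaining algebra is a direct extension of \cite{Markfelder24}.
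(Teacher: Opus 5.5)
Your outer frame (reduce to the eight points $z^{(j)}:=(\vm,\mU,q,\vF_0+\vf^j_{\rho,Q})$, then use convexity of $(\sK_{\rho,Q})^\co$ and openness of $\sW_{\rho,Q}$) matches the paper's Step~3. The gap is in the middle step, which carries all the content: the decomposition you describe there fails.

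You claim $z^{(j)}=\mu z_j+(1-\mu)w_j$ with $z_j\in\sK_{\rho,Q}$, $q_j=Q$, $\vm_j=\vm+\tau\vsigma^j$ ($\tau>0$), and $w_j=(\vm',\mU',q',\vF')$ satisfying $\frac{\vm'\otimes\vm'}{\rho}+p(\rho)\id-\mU'-q'\id\le0$, $q'\le Q$ and equilibrium flux. Put $h(q):=\frac{1}{\rho}\big(\tfrac32(q-p(\rho))+P(\rho)+p(\rho)\big)$.
\begin{itemize}
\item Matrix: because $z_j$ satisfies \eqref{eq:cond-K-1}, the matrix of $w_j$ equals $\frac{1}{1-\mu}\big(\mM+\frac{\mu\tau^2}{\rho(1-\mu)}\vsigma^j\otimes\vsigma^j\big)$ with your $\mM$. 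This is $\le0$ iff $\frac{\mu\tau^2}{1-\mu}\le\rho A^j_{\rho,Q}$.
\item Flux: because $h$ is affine, $w_j$ has equilibrium flux iff $\vf^j_{\rho,Q}=\frac{3\mu\tau(q_j-q)}{2\rho(1-\mu)}\vsigma^j$.
\end{itemize}
With $q_j=Q$ the two conditions are compatible only if $\tau\le\frac32(Q-q)\,r^j_{\rho,Q}$. But $\tau$ is already fixed by $|\vm+\tau\vsigma^j|^2=3\rho(Q-p(\rho))$, and the inequality fails in general. Take $\rho=1$, $\vm=0$, $\mU=0$, $q=p(1)+\lambda$, so $\mM=-\lambda\id$ and $A^j_{\rho,Q}=\lambda/3$, and set $d:=Q-q$. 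Then $\tau=\sqrt{\lambda+d}$ and $\frac32 d\,r^j_{\rho,Q}=\frac32\sqrt{\lambda/3+d}$, so you would need $\lambda\le 5d$. Taking $q_j\in(q,Q)$ does not rescue this example either.

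The error comes from conflating ``the matrix constraint degenerates'' with membership in $\sK_{\rho,Q}$. A point reached that way has matrix $\mM+A^j_{\rho,Q}\vsigma^j\otimes\vsigma^j$, which is singular but not zero. Note also the sign: Lemma~\ref{lemma:definiteness-remains-for-sum} gives $\mM+A^j_{\rho,Q}\vsigma^j\otimes\vsigma^j\le0$, not $\mM-A^j_{\rho,Q}\vsigma^j\otimes\vsigma^j$.

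The missing idea is the paper's segment, along which \emph{neither} endpoint lies in $\sK_{\rho,Q}$. Take $\widehat\vm=r^j_{\rho,Q}\vsigma^j$, $\widehat q=\frac23$ and suitable $\widehat\mU,\widehat\vF$, and set $\Phi(\mu)=\mu^2r^j_{\rho,Q}-\frac23\mu\big(\frac{\rho}{r^j_{\rho,Q}}-\vm\cdot\vsigma^j\big)$. Along $z^{(j)}+\mu\widehat z$:
\begin{itemize}
\item the matrix is $\mM+\frac{r^j_{\rho,Q}}{\rho}\Phi(\mu)\,\vsigma^j\otimes\vsigma^j$;
\item the flux defect $\vF'-h(q')\vm'$ is $\vf^j_{\rho,Q}-\frac{1}{\rho}\Phi(\mu)\,\vsigma^j$.
\end{itemize}
Both are governed by the same scalar $\Phi(\mu)$. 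The two roots $\mu_-<0<\mu_+$ of $\Phi(\mu)=\rho A^j_{\rho,Q}/r^j_{\rho,Q}$ therefore give, at both endpoints simultaneously, matrix $\mM+A^j_{\rho,Q}\vsigma^j\otimes\vsigma^j\le0$ and exactly equilibrium flux. The definition of $r^j_{\rho,Q}$ is what yields $\mu_+\le\frac32(Q-q)$, i.e.\ $q\le Q$ at the upper endpoint.

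Both endpoints then lie in the rigid-flux set $\{\text{matrix}\le0,\ q\le Q,\ \vF=h(q)\vm\}$. This set is contained in $(\sK_{\rho,Q})^\co$ directly: split $(\vm,\mU)$ at fixed $q$ via \cite[Lemma~4.3.6]{Markfelder}, and use that $h(q)\vm$ is linear in $\vm$ for fixed $q$. No zero eigenvalue or further splitting is needed. Without this simultaneous tuning of matrix and flux by a single quadratic, your statement that the barycentric flux deviation ``equals $\vf^j$'' has no construction behind it.
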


In order to prove \ref{prop:WsubsetU-3D}, we follow \cite[Proof of Prop.~3.9]{Markfelder24} and proceed in three steps.

\subsection{Step 1: Rigid energy flux} 

\begin{lemma} \label{lemma:WsubsetQ-step1}
	Let $(\vm,\mU,q,\vF)\in \phase$ such that 
	\begin{align*}
		\frac{\vm\otimes\vm}{\rho} + p(\rho)\id - \mU -q \id &\leq 0, \\
		q &\leq Q, \\
		\vF &= \frac{\frac{3}{2} \big(q - p(\rho)\big) + P(\rho) + p(\rho)}{\rho} \vm.
	\end{align*}
	Then $(\vm,\mU,q,\vF)\in (\sK_{\rho,Q})^\co$. 
\end{lemma}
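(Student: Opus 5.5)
The plan is to write $(\vm,\mU,q,\vF)$ as a convex combination of points of $\sK_{\rho,Q}$ that all share the \emph{same} value $q$. Fixing $q$ is the key simplification. On the slice $\{q_i=q\}$, the constitutive relation \eqref{eq:id-F} for $\vF_i$ reads $\vF_i = c(q)\vm_i$ with
$$c(q):=\frac{\frac{3}{2}(q-p(\rho))+P(\rho)+p(\rho)}{\rho},$$
which is \emph{linear} in $\vm_i$. So if $\sum_i\tau_i\vm_i=\vm$, the energy flux automatically averages to $c(q)\vm=\vF$, which is exactly the third hypothesis. Likewise $q_i=q\le Q$ holds trivially. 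Hence only the $(\vm,\mU)$-part has to be decomposed.

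\textbf{Step 1: rescaling.} Set $\vv:=\vm/\sqrt{\rho}$ and $\mathbb{W}:=\mU+(q-p(\rho))\id$. The first hypothesis becomes $\mathbb{W}-\vv\otimes\vv\geq 0$. Since $\mU$ is traceless, $\tr\mathbb{W}=3(q-p(\rho))=:s^2$. Taking the trace of the hypothesis gives $|\vv|^2\le s^2$, and in particular $q\ge p(\rho)$, so $s\ge 0$ is well defined.

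\textbf{Step 2: the $\sK_{\rho,Q}$ points on this slice.} A tuple $(\vm_i,\mU_i,q,\vF_i)$ with $q_i=q$ lies in $\sK_{\rho,Q}$ exactly when
\begin{itemize}
\item $|\vv_i|=s$, where $\vv_i=\vm_i/\sqrt\rho$;
\item $\mU_i=\rho\,\vv_i\otimes\vv_i - p(\rho)\id - q\id+\ldots$, equivalently $\mathbb{W}_i:=\mU_i+(q-p(\rho))\id=\vv_i\otimes\vv_i$;
\item $\vF_i=c(q)\vm_i$.
\end{itemize}
So the claim reduces to the classical statement
$$\{(\vv,\mathbb{W}) : \mathbb{W}\geq\vv\otimes\vv,\ \tr\mathbb{W}=s^2\}\subset\co\{(\vv',\vv'\otimes\vv') : |\vv'|=s\}.$$
This is the compressible analogue of De~Lellis--Sz\'ekelyhidi's lemma, cf.~\cite[Lemma~4.3.6]{Markfelder}, and I would cite it. Note that the reduction is dimension-independent, so nothing specific to $n=3$ enters beyond replacing $\tfrac{n}{2}$ by $\tfrac32$.

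\textbf{Step 3: a self-contained proof of the classical statement, if needed.} The case $s=0$ is trivial, since then $\vv=0$ and $\mathbb{W}=0$. For $s>0$ I would argue as follows.
\begin{itemize}
\item First treat $\vv=0$. Diagonalise $\mathbb{W}=\sum_k\mu_k e_k\otimes e_k$ with $\mu_k\ge0$ and $\sum_k\mu_k=s^2$. Then $(0,\mathbb{W})$ is the average of the points $(\pm s e_k, s^2 e_k\otimes e_k)$ with weights $\mu_k/(2s^2)$.
\item For general $\vv$, let $S$ be the compact convex set on the left and $\ov{\co}$ the convex hull on the right. It suffices to show that every extreme point of $S$ satisfies $\mathbb{W}=\vv\otimes\vv$.
\item Suppose instead $\mathbb{W}-\vv\otimes\vv\ge0$ with a positive eigenvalue along some direction $e$. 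Move $\vv$ along the line $\vv+te$ and adjust $\mathbb{W}$ by $t(\vv\otimes e+e\otimes\vv)$ plus a trace-preserving correction. This produces a nontrivial segment through the point that stays in $S$, contradicting extremality.
\item An alternative route: if $|\vv|<s$, split $(\vv,\mathbb{W})$ along the line through $\vv$ in direction $e$ into two points where either $|\vv|=s$ or the matrix inequality becomes degenerate, and induct on the rank of $\mathbb{W}-\vv\otimes\vv$.
\end{itemize}

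The main obstacle I expect is this last step, namely keeping the trace constraint and positive semidefiniteness simultaneously while moving along the segment. If it gets messy, I would simply invoke the cited lemma from \cite{Markfelder}. Once the decomposition $(\vv,\mathbb{W})=\sum_i\tau_i(\vv_i,\vv_i\otimes\vv_i)$ is available, I transform back via $\vm_i=\sqrt\rho\,\vv_i$, $\mU_i=\vv_i\otimes\vv_i-(q-p(\rho))\id$, $q_i=q$ and $\vF_i=c(q)\vm_i$. Each such tuple lies in $\sK_{\rho,Q}$, and their convex combination equals $(\vm,\mU,q,\vF)$.
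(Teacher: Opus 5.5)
Your proposal is correct and follows essentially the same route as the paper: keep $q$ fixed, decompose $(\vm,\mU)$ via \cite[Lemma~4.3.6]{Markfelder} into points with $\frac{\vm_i\otimes\vm_i}{\rho}+p(\rho)\id=\mU_i+q\id$, and define each $\vF_i$ through the flux relation, which is linear in $\vm_i$ once $q$ is fixed, so the convex combination returns $\vF$. Your optional extreme-point proof of the cited lemma also works (for instance with the trace correction $-2t(\vv\cdot e)\,e\otimes e$, where $e$ is an eigenvector of $\mathbb{W}-\vv\otimes\vv$ with positive eigenvalue); note that the garbled intermediate formula for $\mU_i$ in your Step~2 should read $\mU_i=\vv_i\otimes\vv_i-(q-p(\rho))\id$, as you correctly write at the end.
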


\begin{proof}
From \cite[Lemma~4.3.6]{Markfelder} we infer that there exist $N\in \N$ and $\big(\tau_i,(\vm_i,\mU_i)\big)\in \R^+\times \R^3 \times \symz{3}$ for $i=1,...,N$ such that 
\begin{align*}
	\sum_{i=1}^N \tau_i &= 1, \\
	\frac{\vm_i \otimes \vm_i}{\rho} + p(\rho) \id &= \mU_i + q \id, \qquad \text{ for all }i=1,...,N, \text{ and } \\
	(\vm,\mU) &= \sum_{i=1}^N \tau_i (\vm_i,\mU_i).
\end{align*}

Setting 
$$
	\vF_i := \frac{\frac{3}{2} \big(q - p(\rho)\big) + P(\rho) + p(\rho)}{\rho} \vm_i, \qquad \text{ for }i=1,...,N,
$$
we obtain $(\vm_i,\mU_i,q,\vF_i)\in \sK_{\rho,Q}$, as well as 
$$
	(\vm,\mU,q,\vF) = \sum_{i=1}^N \tau_i (\vm_i,\mU_i,q,\vF_i).
$$
Hence $(\vm,\mU,q,\vF)\in (\sK_{\rho,Q})^\co$. 
\end{proof}

\subsection{Step 2: Eight particular energy fluxes} 

\begin{lemma} \label{lemma:WsubsetQ-step2}
	Let $(\vm,\mU,q,\vF)\in \phase$ such that $\exists j\in \{1,2,...,8\}$ with
	\begin{align*}
		\frac{\vm\otimes\vm}{\rho} + p(\rho)\id - \mU -q \id &< 0, \\
		q &< Q, \\
		\vF - \frac{\frac{3}{2} \big(q - p(\rho)\big) + P(\rho) + p(\rho)}{\rho} \vm &= \vf^j_{\rho,Q}(\vm,\mU,q).
	\end{align*}
	Then $(\vm,\mU,q,\vF)\in (\sK_{\rho,Q})^\co$. 
\end{lemma}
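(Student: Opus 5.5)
The plan is to exhibit $(\vm,\mU,q,\vF)$ as a convex combination of two points $Z_1,Z_2$, each of which satisfies the hypotheses of Lemma~\ref{lemma:WsubsetQ-step1}. Then both lie in $(\sK_{\rho,Q})^\co$, hence so does $(\vm,\mU,q,\vF)$. This mirrors \cite[Proof of Prop.~3.9, Step 2]{Markfelder24}; only the rank-one matrix inequality needs to be redone in 3-D, and that is exactly what Lemma~\ref{lemma:definiteness-remains-for-sum} provides. Throughout we write $\vsigma=\vsigma^j$, $A=A^j_{\rho,Q}(\vm,\mU,q)>0$, $r=r^j_{\rho,Q}(\vm,\mU,q)>0$ (cf.~\eqref{eq:app-A>0}, \eqref{eq:app-r>0}), $\mM:=\frac{\vm\otimes\vm}{\rho}+p(\rho)\id-\mU-q\id<0$, and
$$g(\tilde q):=\frac{\frac{3}{2}\big(\tilde q-p(\rho)\big)+P(\rho)+p(\rho)}{\rho},$$
which is affine in $\tilde q$ with slope $\frac{3}{2\rho}$.

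The two points are chosen along the line through $(\vm,\mU,q,\vF)$ in direction $\vsigma$. Concretely,
$$Z_1=\big(\vm+(1-\tau)s\vsigma,\ \mU_1,\ Q,\ g(Q)(\vm+(1-\tau)s\vsigma)\big),\qquad Z_2=\big(\vm-\tau s\vsigma,\ \mU_2,\ q_2,\ g(q_2)(\vm-\tau s\vsigma)\big).$$
Here $\tau\in(0,1)$ and $q_2$ are tied by $\tau Q+(1-\tau)q_2=q$, and $\mU_1,\mU_2$ are trace-free matrices with $\tau\mU_1+(1-\tau)\mU_2=\mU$. The $\vm$- and $q$-components then average correctly by construction. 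For the flux, the affinity of $g$ gives
$$\tau\vF_1+(1-\tau)\vF_2=g(q)\vm+\tau(1-\tau)s\big(g(Q)-g(q_2)\big)\vsigma=g(q)\vm+\tau(1-\tau)s\,\frac{3(Q-q_2)}{2\rho}\,\vsigma .$$
Since $Q-q_2=(Q-q)/(1-\tau)$, the offset equals $\frac{3\tau s(Q-q)}{2\rho}\vsigma$. Its direction is already $\vsigma$, so there is one scalar equation to match it with $\vf^j_{\rho,Q}=\frac{A}{r}\vsigma$. The definition of $r$ as the positive root of $(Q-q)r^2+(\vm\cdot\vsigma)r-\rho A=0$ is designed precisely to fix the remaining free parameters $(\tau,s)$ consistently with this equation.

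Next I would choose $\mU_1,\mU_2$ and verify the semidefiniteness conditions of Lemma~\ref{lemma:WsubsetQ-step1}. The only term of $\frac{\vm_i\otimes\vm_i}{\rho}$ that does not average linearly is the rank-one term $\vsigma\otimes\vsigma$. The plan is to put the cross terms $\frac{1}{\rho}(\vm\otimes\vsigma+\vsigma\otimes\vm)$, the $q$-shift and the traces into $\mU_1,\mU_2$ in a way compatible with $\tau\mU_1+(1-\tau)\mU_2=\mU$. Each remaining matrix condition then reduces to an inequality of the form $\mM+\mu\,\vsigma\otimes\vsigma\le0$, where $\mu$ is an explicit multiple of $s^2/\rho$ up to a trace correction absorbed by the $(Q-q)$ shift. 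Lemma~\ref{lemma:definiteness-remains-for-sum} says $\mM+A\,\vsigma\otimes\vsigma\le0$, since $A=-1/(\vsigma^\trans\mM^{-1}\vsigma)$. So it suffices to arrange $\mu\le A$, and I expect the quadratic for $r$ to give exactly $\mu=A$ at $Z_1$, the boundary case.

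The main obstacle is this bookkeeping: making the trace-free requirement on $\mU_i$ and the condition $q_i\le Q$ compatible with a rank-one correction that is not trace-free. The substitute for the 2-D determinant formula is the identity $\vsigma^\trans\mM^{-1}\vsigma=-1/A$ together with Lemma~\ref{lemma:definiteness-remains-for-sum}. If the direct ansatz leaves a residual, I would first try distributing the trace of $\vsigma\otimes\vsigma$ (equal to $3$) into $q_1=Q$ versus $q_2$, adjusting $\tau$ accordingly. Once the matrix inequalities hold, Lemma~\ref{lemma:WsubsetQ-step1} applies to $Z_1$ and $Z_2$, and the convex combination concludes the proof.
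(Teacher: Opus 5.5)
There is a genuine gap. Your overall plan is sound: two points on a line in direction $\vsigma=\vsigma^j$, each satisfying Lemma~\ref{lemma:WsubsetQ-step1}, with the matrix condition controlled by $\mM+A\,\vsigma\otimes\vsigma\le 0$. But the step that actually proves the lemma is not carried out, namely choosing $\mU_1,\mU_2,\tau,s$ and verifying the matrix inequalities. Moreover, the mechanism you predict for closing it fails in 3-D.

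First, you misquote $r$. By its definition, $x:=r(Q-q)$ is the positive root of $x^2+(\vm\cdot\vsigma)x-\rho A-\rho(Q-q)=0$. Second, since $|\vsigma|^2=3$, the trace-free constraint introduces a factor $\tfrac23$. To see this, carry out your reduction explicitly: set $\mU_1=\mU+(1-\tau)\mathbb{D}$ and $\mU_2=\mU-\tau\mathbb{D}$, where $\mathbb{D}=\frac{s}{\rho}(\vm\otimes\vsigma+\vsigma\otimes\vm)-\Delta\id-\alpha\,\vsigma\otimes\vsigma$, $\Delta:=Q-q_2$ and $\alpha:=\frac{2s}{3\rho}\vm\cdot\vsigma-\Delta$, so that $\tr\mathbb{D}=0$. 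Then $N_i:=\frac{\vm_i\otimes\vm_i}{\rho}+p(\rho)\id-\mU_i-q_i\id=\mM+c_i\,\vsigma\otimes\vsigma$, with $c_1=\frac{1}{\rho}\bigl(y^2+\frac23(\vm\cdot\vsigma)y\bigr)-(Q-q)$ for $y:=(1-\tau)s$, and $c_2=-\tau\alpha+\frac{(\tau s)^2}{\rho}$. Moreover, $N_i\le 0$ if and only if $c_i\le A$ (for the converse, test with $\mM^{-1}\vsigma$).

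Hence the boundary case at $q_1=Q$ is governed by $y^2+\frac23(\vm\cdot\vsigma)y-\rho(A+Q-q)=0$, not by the quadratic for $r$. For instance, $y=r(Q-q)$ gives $c_1=A-\frac{r(Q-q)\,\vm\cdot\vsigma}{3\rho}$, which exceeds $A$ whenever $\vm\cdot\vsigma<0$. This is the same phenomenon visible in the paper: along the direction $r\vsigma$ the admissible segment ends strictly below $q=Q$, since $\mu_+<\frac32(Q-q)$. So an endpoint pinned at $q_1=Q$ cannot be tuned by $r$ itself.

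Your ansatz can be completed as follows. Let $y>0$ be the positive root of the $\frac23$-quadratic above, and set $z:=\tau s=\frac{2\rho A}{3r(Q-q)}$ (your flux identity), $s:=y+z$ and $\tau:=z/(y+z)$. Substituting $\rho(Q-q)=y^2+\frac23(\vm\cdot\vsigma)y-\rho A$ gives $c_2=\frac{z(z+y)}{\rho}-\frac{zA}{y}$. Thus $c_2\le A$ if and only if $yz\le\rho A$, which is equivalent to $y\le\frac32 r(Q-q)$. This last inequality holds because the $\frac23$-quadratic evaluated at $\frac32 r(Q-q)$ equals $\frac54 r^2(Q-q)^2>0$; it is the only place where the quadratic defining $r$ enters.

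The paper instead keeps $r$ and lets both endpoints move. It uses the direction $\widehat{\vm}=r\vsigma$, $\widehat{q}=\frac23$, $\widehat{\mU}=\frac{r}{\rho}\bigl(\vm\otimes\vsigma+\vsigma\otimes\vm+\frac23(\frac{\rho}{r}-\vm\cdot\vsigma)\vsigma\otimes\vsigma\bigr)-\frac23\id$. It takes as endpoints the roots $\mu_\pm$ of $\mu^2 r-\frac23\mu(\frac{\rho}{r}-\vm\cdot\vsigma)=\frac{\rho A}{r}$. Then both endpoints have matrix exactly $\mM+A\,\vsigma\otimes\vsigma$ and rigid flux, and $r$ is used only to show $q+\frac23\mu_+\le Q$.
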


\begin{proof}
We set
\begin{align*}
	\mu_\pm := \frac{1}{2r_{\rho,Q}^j} \left( \frac{2}{3} \left( \frac{\rho}{r^j_{\rho,Q}} - \vm\cdot \vsigma^j\right) \pm \sqrt{4 \rho A^j_{\rho,Q} + \frac{4}{9} \left( \frac{\rho}{r^j_{\rho,Q}} - \vm\cdot \vsigma^j\right)^2}\right),
\end{align*}
as well as 
\begin{align*}
	\widehat{\vm} &:= r^j_{\rho,Q} \vsigma^j, \\
	\widehat{\mU} &:= \frac{r^j_{\rho,Q}}{\rho} \left( \vm \otimes \vsigma^j + \vsigma^j\otimes \vm + \frac{2}{3} \left( \frac{\rho}{r^j_{\rho,Q}} - \vm\cdot \vsigma^j\right) \vsigma^j \otimes \vsigma^j \right) - \frac{2}{3} \id, \\ 
	\widehat{q} &:= \frac{2}{3}, \\
	\widehat{\vF} &:= \frac{\vm}{\rho} + \left[ \frac{\frac{3}{2} \big(q - p(\rho)\big) + P(\rho) + p(\rho)}{\rho}  r^j_{\rho,Q} + \frac{2}{3\rho} \left( \frac{\rho}{r^j_{\rho,Q}} - \vm\cdot \vsigma^j\right) \right] \vsigma^j .
\end{align*}

Using \eqref{eq:app-A>0} and \eqref{eq:app-r>0}, we find
\begin{equation*} 
	\mu_- < 0 \qquad \text{ and } \qquad \mu_+ > 0.
\end{equation*}
So, if we set
\begin{align*}
	\tau_1 &:= \frac{\mu_+}{\mu_+ - \mu_-}, & \tau_2 &:= -\frac{\mu_-}{\mu_+ - \mu_-},
\end{align*}
then $\tau_1,\tau_2\in (0,1)$ with $\tau_1+\tau_2 = 1$. Moreover, 
$$
	(\vm,\mU,q,\vF) = \tau_1 (\vm_1,\mU_1,q_1,\vF_1) + \tau_2 (\vm_2,\mU_2,q_2,\vF_2)
$$
for 
\begin{align*}
	(\vm_1,\mU_1,q_1,\vF_1) &:= (\vm,\mU,q,\vF) + \mu_- (\widehat{\vm},\widehat{\mU},\widehat{q},\widehat{\vF}), \\
	(\vm_2,\mU_2,q_2,\vF_2) &:= (\vm,\mU,q,\vF) + \mu_+ (\widehat{\vm},\widehat{\mU},\widehat{q},\widehat{\vF}). 
\end{align*}
Thus, it suffices to prove that $(\vm_1,\mU_1,q_1,\vF_1), (\vm_2,\mU_2,q_2,\vF_2)\in (\sK_{\rho,Q})^\co$. 

The definition of $\mu_\pm$ yields 
\begin{equation} \label{eq:app-mu-solves-quadratic-eq}
	(\mu_\pm)^2 r^j_{\rho,Q} - \mu_\pm \frac{2}{3} \left( \frac{\rho}{r^j_{\rho,Q}} - \vm\cdot \vsigma^j\right) = \frac{\rho A^j_{\rho,Q}}{r^j_{\rho,Q}}.
\end{equation}
Next, we compute 
\begin{align*} 
	&\frac{\vm_{1,2}\otimes\vm_{1,2}}{\rho} + p(\rho)\id - \mU_{1,2} -q_{1,2} \id \\
	&= \frac{\vm\otimes\vm}{\rho} + p(\rho)\id - \mU -q \id + \frac{r^j_{\rho,Q}}{\rho} \left[ (\mu_\mp)^2 r^j_{\rho,Q} - \mu_\mp \frac{2}{3} \left( \frac{\rho}{r^j_{\rho,Q}} - \vm\cdot \vsigma^j\right) \right] \vsigma^j \otimes \vsigma^j \\
	&= \frac{\vm\otimes\vm}{\rho} + p(\rho)\id - \mU -q \id + A^j_{\rho,Q} \vsigma^j \otimes \vsigma^j,
\end{align*}
where we have used \eqref{eq:app-mu-solves-quadratic-eq} in the last step. Keeping the definition of $A^j_{\rho,Q}$ in mind, we obtain by Lemma~\ref{lemma:definiteness-remains-for-sum} that 
$$
	\frac{\vm_{1,2}\otimes\vm_{1,2}}{\rho} + p(\rho)\id - \mU_{1,2} -q_{1,2} \id \leq 0. 
$$

Let us next check if $q_{1,2}\leq Q$. As $\mu_-<0$, we immediately see that $q_1 = q +\mu_- \widehat{q} < q < Q$. Using the definition of $r^j_{\rho,Q}$, we find
\begin{align*}
	&\frac{9}{4} \Big( r^j_{\rho,Q} \Big)^2 (Q-q)^2 + r^j_{\rho,Q} (Q-q) \vm\cdot \vsigma^j - \rho A^j_{\rho,Q} - \rho (Q-q) \\
	&\geq \Big( r^j_{\rho,Q} \Big)^2 (Q-q)^2 + r^j_{\rho,Q} (Q-q) \vm\cdot \vsigma^j - \rho A^j_{\rho,Q} - \rho (Q-q) = 0.
\end{align*}
The latter is equivalent to 
$$
	4 \rho A^j_{\rho,Q} + \frac{4}{9} \left( \frac{\rho}{r^j_{\rho,Q}} - \vm\cdot \vsigma^j\right)^2 \leq \left( 3 r^j_{\rho,Q} (Q-q) - \frac{2}{3}\left( \frac{\rho}{r^j_{\rho,Q}} - \vm\cdot \vsigma^j\right) \right)^2, 
$$
and, consequently, there holds $\mu_+ \leq \frac{3}{2} (Q-q)$. Here we have used that 
\begin{align*}
	&3 r^j_{\rho,Q} (Q-q) - \frac{2}{3}\left( \frac{\rho}{r^j_{\rho,Q}} - \vm\cdot \vsigma^j\right) \\
	&= \frac{1}{r^j_{\rho,Q} (Q-q)} \left( 3 \Big( r^j_{\rho,Q} \Big)^2 (Q-q)^2 + \frac{2}{3} r^j_{\rho,Q} (Q-q) \vm\cdot \vsigma^j - \frac{2}{3} \rho (Q-q) \right) \\
	& \geq \frac{2}{3r^j_{\rho,Q} (Q-q)} \left( \Big( r^j_{\rho,Q} \Big)^2 (Q-q)^2 + r^j_{\rho,Q} (Q-q) \vm\cdot \vsigma^j - \rho A^j_{\rho,Q} - \rho (Q-q) \right)= 0. 
\end{align*}
So, we finally obtain $q_2 = q + \mu_+ \widehat{q} \leq q + \frac{3}{2} (Q-q) \frac{2}{3} = Q$ as desired. 

For the energy flux, we compute 
\begin{align*}
	&\vF_{1,2} -  \frac{\frac{3}{2} \big(q_{1,2} - p(\rho)\big) + P(\rho) + p(\rho)}{\rho} \vm_{1,2} \\
	&= \vF - \frac{\frac{3}{2} \big(q - p(\rho)\big) + P(\rho) + p(\rho)}{\rho} \vm - \frac{1}{\rho} \left[ (\mu_\mp)^2 r^j_{\rho,Q} - \mu_\mp \frac{2}{3} \left( \frac{\rho}{r^j_{\rho,Q}} - \vm\cdot \vsigma^j\right) \right] \vsigma^j \\
	&= \vf^j_{\rho,Q} - \vf^j_{\rho,Q} = 0, 
\end{align*}
where we made use of \eqref{eq:app-mu-solves-quadratic-eq}.

Summarising, the points $(\vm_1,\mU_1,q_1,\vF_1)$ and $(\vm_2,\mU_2,q_2,\vF_2)$ satisfy the assumptions of Lemma~\ref{lemma:WsubsetQ-step1}, and hence $(\vm_1,\mU_1,q_1,\vF_1), (\vm_2,\mU_2,q_2,\vF_2)\in (\sK_{\rho,Q})^\co$ as desired. 
\end{proof}

\subsection{Step 3: Final proof of Prop.~\ref{prop:WsubsetU-3D}} 

\begin{proof}[Proof of Prop.~\ref{prop:WsubsetU-3D}]
Let $(\vm,\mU,q,\vF)\in \sW_{\rho,Q}$. Set 
$$
	\vF_j := \frac{\frac{3}{2} \big(q - p(\rho)\big) + P(\rho) + p(\rho)}{\rho} \vm + \vf^j_{\rho,Q}(\vm,\mU,q)\qquad \text{ for }j=1,2,...,8.
$$
Then each $(\vm,\mU,q,\vF_j)$, $j=1,2...,8$, satisfies the assumptions of Lemma~\ref{lemma:WsubsetQ-step2}, and hence $(\vm,\mU,q,\vF_j)\in (\sK_{\rho,Q})^\co$ for all $j=1,2,...,8$. Because of 
$$
	(\vm,\mU,q,\vF) = \sum_{j=1}^8 \kappa_j (\vm,\mU,q,\vF_j),
$$ 
this implies $(\vm,\mU,q,\vF)\in (\sK_{\rho,Q})^\co$. Thus, $\sW_{\rho,Q}\subset (\sK_{\rho,Q})^\co$. Since $\sW_{\rho,Q}$ is open (by continuity of the map $(\vm,\mU,q)\mapsto \vf^j_{\rho,Q}(\vm,\mU, q)$), we even have $\sW_{\rho,Q}\subset \interior{\big((\sK_{\rho,Q})^\co\big)}=\sU_{\rho,Q}$ as desired. 
\end{proof}

\section{Jensen's inequality}

In this section we recall Jensen's inequality both for real-valued and for matrix-valued convex functions. Jensen's inequality deals with the commutator of the mollification and a convex function. 

In this section we consider $n\in \{2,3\}$, $m\in \N$, and functions $u= u(t,\vx)\in \R^m$. Let $\phi$ be a space-time mollifier. For a parameter $\sigma>0$, the mollification $u\ast \phi_\sigma$ of a function $u$ is given by the convolution with $\phi_\sigma(t,\vx):= \frac{1}{\sigma^{n+1}} \phi\big(\frac{t}{\sigma}, \frac{\vx}{\sigma}\big)$. 

\begin{lemma}[Jensen's inequality for real-valued functions] \label{lemma:jensen-real-valued}
	Let $S\subset \R^m$ a convex set, and $f:S\to \R$ a convex map. Then for all functions $u$ which take values in $S$, and all parameters $\sigma>0$, the estimate
	$$
		f(u \ast \phi_\sigma) \leq f(u) \ast \phi_\sigma
	$$
	holds pointwise in $(t,\vx)$. 
\end{lemma}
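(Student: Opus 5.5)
The plan is to reduce the statement to the finite-dimensional Jensen inequality for probability measures, applied pointwise. Fix $(t,\vx)$ and $\sigma>0$. Since $\phi$ is a standard space-time mollifier, we have $\phi_\sigma\geq 0$ and $\int_{\R^{1+n}}\phi_\sigma=1$. The map
$$
	\mu_{(t,\vx)}(A):=\int_A \phi_\sigma(t-s,\vx-\vy)\,{\rm d}s\,{\rm d}\vy
$$
is therefore a probability measure, supported in the compact ball of radius of order $\sigma$ around $(t,\vx)$. By definition,
$$
	(u\ast\phi_\sigma)(t,\vx)=\int u\,{\rm d}\mu_{(t,\vx)} \qquad\text{and}\qquad (f(u)\ast\phi_\sigma)(t,\vx)=\int f(u)\,{\rm d}\mu_{(t,\vx)}.
$$
The claim is then exactly $f\big(\int u\,{\rm d}\mu\big)\leq \int f(u)\,{\rm d}\mu$.

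To prove this, I would first check that the barycenter $\bar u:=\int u\,{\rm d}\mu$ lies in $S$, so that $f(\bar u)$ is defined. In the applications, $u$ is locally bounded, since it takes values in a compact convex set, or has $L^1_{\rm loc}$ values in $\R^+_0\times\R^n$. A barycenter of an integrable $S$-valued function with respect to a probability measure lies in the closure of $S$, and in fact in $S$ itself by the standard argument: if $\bar u\notin S$, separate $\bar u$ from the relative interior and iterate on the face. For the cases used, $S$ is closed or $\R^m$, so this is immediate.

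Next, I would use the supporting-hyperplane property of convex functions. For $\bar u$ in the relative interior of $S$, there exists an affine $\ell(z)=f(\bar u)+\vec{k}\cdot(z-\bar u)$ with $\ell\leq f$ on $S$, namely a subgradient. Integrating $\ell(u)\leq f(u)$ against $\mu$ and using $\int{\rm d}\mu=1$ gives
$$
	f(\bar u)=\int\ell(u)\,{\rm d}\mu\leq \int f(u)\,{\rm d}\mu.
$$

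I expect the main obstacle to be the boundary case, where $\bar u$ lies on the relative boundary of $S$ and a subgradient may fail to exist. An example is $f(\rho,\vm)=|\vm|^2/\rho$ near $\rho=0$. For this case, I would argue as follows. If $\bar u\in\partial S$, then $u$ takes values $\mu$-a.e. in the face of $S$ containing $\bar u$, since a linear functional that is nonnegative on $S$, vanishes at $\bar u$, and has zero $\mu$-integral must vanish $\mu$-a.e. I would then restrict $f$ to that lower-dimensional convex set and induct on dimension. Alternatively, one can take $f$ lower semicontinuous, which holds in the applications, and approximate $f$ by the supremum of affine minorants. Monotone convergence then finishes the argument, provided integrability of $f(u)$ is interpreted in the extended sense.
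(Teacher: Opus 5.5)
Your argument is correct. The paper gives no proof of this lemma (it is dismissed as well known), and your approach is exactly the standard argument being alluded to: you reduce to Jensen's inequality for the probability measure $\phi_\sigma(t-s,\vx-\vy)\,{\rm d}s\,{\rm d}\vy$, use an affine minorant (subgradient) at the barycenter when it lies in the relative interior of $S$, and reduce face by face in dimension when it lies on the relative boundary.
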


\begin{lemma}[Jensen's inequality for matrix-valued functions] \label{lemma:jensen-matrix-valued}
	Let $S\subset \R^m$ a convex set, and $\mM:S\to \sym{n}$ a convex map\footnote{Here, convexity is understood in the sense of definiteness, i.e.~there holds
	$$
		\mM(\tau \vu_1 + (1-\tau) \vu_2) - \tau \mM(\vu_1) - (1-\tau) \mM(\vu_2) \leq 0 \qquad \text{ for all }\tau\in [0,1] \text{ and all }\vu_1,\vu_2\in S,
	$$
	where the inequality above means that the matrix on the left-hand side is negative semi-definite.}. Then for all functions $u$ which take values in $S$, and all parameters $\sigma>0$, the estimate\footnote{Similarly to the convexity of matrix-valued functions, this inequality has to be understood in the sense of definiteness, i.e.~$\mM(u) \ast \phi_\sigma - \mM(u \ast \phi_\sigma)$ is negative semi-definite.}
	$$
		\mM(u \ast \phi_\sigma) \leq \mM(u) \ast \phi_\sigma
	$$
	holds pointwise in $(t,\vx)$.  
\end{lemma}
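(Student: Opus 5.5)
We prove Lemma~\ref{lemma:jensen-matrix-valued} by reducing it to the scalar Jensen inequality through testing against fixed vectors. Definiteness of a symmetric matrix means nonnegativity of all quadratic forms. So it suffices to show that for every fixed $\xi\in\R^n$ and every point $(t,\vx)$,
$$
	\xi^\trans\cdot\big(\mM(u)\ast\phi_\sigma\big)(t,\vx)\cdot\xi \ \geq\ \xi^\trans\cdot\mM\big((u\ast\phi_\sigma)(t,\vx)\big)\cdot\xi .
$$

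First I would verify that for fixed $\xi$ the scalar map $f_\xi:S\to\R$, $f_\xi(\vu):=\xi^\trans\cdot\mM(\vu)\cdot\xi$, is convex in the usual sense. This follows directly from the footnote definition of matrix convexity. The matrix
$$
	\mM(\tau\vu_1+(1-\tau)\vu_2)-\tau\mM(\vu_1)-(1-\tau)\mM(\vu_2)
$$
is negative semi-definite, and evaluating its quadratic form at $\xi$ gives $f_\xi(\tau\vu_1+(1-\tau)\vu_2)\leq\tau f_\xi(\vu_1)+(1-\tau)f_\xi(\vu_2)$.

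Second, I would note that convolution with the scalar kernel $\phi_\sigma$ commutes with the linear map $\mathbb{X}\mapsto\xi^\trans\cdot\mathbb{X}\cdot\xi$, since it acts entrywise and linearly. Hence
$$
	\xi^\trans\cdot(\mM(u)\ast\phi_\sigma)\cdot\xi=f_\xi(u)\ast\phi_\sigma .
$$
Applying Lemma~\ref{lemma:jensen-real-valued} to $f_\xi$ then gives $f_\xi(u\ast\phi_\sigma)\leq f_\xi(u)\ast\phi_\sigma$ pointwise, which is exactly the displayed inequality. Since $\xi$ was arbitrary, $\mM(u)\ast\phi_\sigma-\mM(u\ast\phi_\sigma)$ is positive semi-definite, which is the claim.

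The only point needing care is the implicit requirement that $u\ast\phi_\sigma$ takes values in $S$ and that the integrals exist. I would handle this as in the scalar lemma. At each point, $(u\ast\phi_\sigma)(t,\vx)$ is an average of values of $u$ with respect to the probability measure $\phi_\sigma(t-s,\vx-\vy)\,{\rm d}s\,{\rm d}\vy$. Such an average lies in the closed convex hull of $S$, and in $S$ itself under the standing assumptions under which Lemma~\ref{lemma:jensen-real-valued} is applied. Integrability of $\mM(u)$ is likewise inherited from the scalar setting. I do not expect a real obstacle here; the argument is a direct reduction to Lemma~\ref{lemma:jensen-real-valued}.
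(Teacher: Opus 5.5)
Your argument is correct. The paper gives no proof of this lemma (both Jensen lemmas are declared well known), and your scalarization is the standard argument it alludes to. For fixed $\xi\in\R^n$ the map $\vu\mapsto\xi^\trans\cdot\mM(\vu)\cdot\xi$ is convex by the footnote definition. The linear functional $\mathbb{X}\mapsto\xi^\trans\cdot\mathbb{X}\cdot\xi$ commutes with entrywise convolution. Lemma~\ref{lemma:jensen-real-valued} then gives the quadratic-form inequality for every $\xi$.

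Two small remarks:

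\begin{enumerate}
\item Your conclusion that $\mM(u)\ast\phi_\sigma-\mM(u\ast\phi_\sigma)$ is \emph{positive} semi-definite is the correct reading of $\mM(u\ast\phi_\sigma)\leq\mM(u)\ast\phi_\sigma$. The paper's footnote says ``negative semi-definite'', which is a sign slip. The paper's own later use of the lemma, namely $\ov{\mM}_3\geq 0$ in Lemma~\ref{lemma:estimate-matrices-ov}, confirms your reading.
\item Your hedge about the closed convex hull is unnecessary. Since $\phi_\sigma\geq 0$ has unit integral, $(u\ast\phi_\sigma)(t,\vx)$ is the barycenter of a probability measure concentrated on $S$. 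In finite dimensions, given a finite first moment, such a barycenter always lies in the convex set $S$ itself. If it did not, a hyperplane weakly separating it from $S$ would force the measure to be concentrated on the intersection of $S$ with that hyperplane, and induction on dimension gives a contradiction.
\end{enumerate}
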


Both Lemma~\ref{lemma:jensen-real-valued} and \ref{lemma:jensen-matrix-valued} are well-known so we do not present the proof here. 

Finally, we show that a certain matrix-valued function, which appears several times throughout this paper, is convex. 

\begin{lemma} \label{lemma:map-convex}
	The map $\R^+\times \R^n \to \sym{n}$, $(\rho,\vm)\mapsto \frac{\vm\otimes \vm}{\rho} + p(\rho) \id$ with $p$ given in \eqref{eq:isentropic-pressure} is convex. 
\end{lemma}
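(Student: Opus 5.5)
Convexity in the sense of definiteness means that for every fixed $\xi\in\R^n$ the scalar map
$$
g_\xi(\rho,\vm):=\xi^\trans\Big(\frac{\vm\otimes\vm}{\rho}+p(\rho)\id\Big)\xi=\frac{(\vm\cdot\xi)^2}{\rho}+p(\rho)|\xi|^2
$$
is convex on $\R^+\times\R^n$. Indeed, the defect $\mM(\tau\vu_1+(1-\tau)\vu_2)-\tau\mM(\vu_1)-(1-\tau)\mM(\vu_2)$ is negative semi-definite if and only if its quadratic form is $\leq 0$ for every $\xi$. That quadratic form is exactly the convexity defect of $g_\xi$, by linearity of $\mM\mapsto\xi^\trans\mM\xi$. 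So I reduce the lemma to proving convexity of $g_\xi$ for each fixed $\xi$.

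Since a sum of convex functions is convex, I treat the two terms of $g_\xi$ separately.

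\begin{itemize}
\item \emph{Pressure term.} $\rho\mapsto p(\rho)|\xi|^2=\kappa|\xi|^2\rho^\gamma$ is convex on $\R^+$ because $\gamma>1$ and $\kappa>0$. It does not depend on $\vm$, so it is convex in $(\rho,\vm)$.
\item \emph{Kinetic term.} $(\rho,\vm)\mapsto (\vm\cdot\xi)^2/\rho$ is the composition of the perspective function $(\rho,s)\mapsto s^2/\rho$ with the linear map $(\rho,\vm)\mapsto(\rho,\vm\cdot\xi)$. It therefore suffices to show that $h(\rho,s)=s^2/\rho$ is convex on $\R^+\times\R$.
\end{itemize}

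For $h$ I would compute the Hessian directly:
$$
\nabla^2 h=\frac{2}{\rho^3}\begin{pmatrix} s^2 & -s\rho\\ -s\rho & \rho^2\end{pmatrix}=\frac{2}{\rho^3}\,(s,-\rho)^\trans(s,-\rho),
$$
which is positive semi-definite for $\rho>0$. Alternatively, one can use the representation $s^2/\rho=\sup_{a\in\R}(2as-a^2\rho)$, which exhibits $h$ as a supremum of linear functions and hence convex.

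Combining the two bullet points gives convexity of $g_\xi$ for every $\xi$, and hence matrix convexity of $\mM$. There is no real obstacle here; the only point needing care is the initial reduction from matrix-valued convexity to scalar convexity of the quadratic forms. The domain $\R^+\times\R^n$ is convex, as Lemma~\ref{lemma:jensen-matrix-valued} requires.
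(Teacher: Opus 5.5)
Your proof is correct, but it takes a different route from the paper. The paper computes the matrix convexity defect in closed form,
\[
\mA = -\frac{\tau(1-\tau)}{\rho_1\rho_2\,(\tau\rho_1+(1-\tau)\rho_2)}\,(\rho_2\vm_1-\rho_1\vm_2)\otimes(\rho_2\vm_1-\rho_1\vm_2) + \big(p(\tau\rho_1+(1-\tau)\rho_2)-\tau p(\rho_1)-(1-\tau)p(\rho_2)\big)\id .
\]
It then reads off $\mA\leq 0$ from two facts: a rank-one matrix $\vm^\ast\otimes\vm^\ast$ has eigenvalues $0$ and $|\vm^\ast|^2$, and $p$ is convex.

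You instead test the defect against $\xi\otimes\xi$, which reduces matrix convexity to scalar convexity of $g_\xi$. You then use the convexity of the perspective function $(\rho,s)\mapsto s^2/\rho$, via its rank-one Hessian or its representation as a supremum of linear functions. This replaces the paper's ``long but straightforward computation'' with standard convex-analysis facts, and it is shorter and more transparent.

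The paper's identity buys an exact formula for the gap. For $\tau\in(0,1)$ the kinetic part of the defect is rank one, and it vanishes precisely when $\vm_1/\rho_1=\vm_2/\rho_2$.

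Your reduction buys, besides brevity, a structural observation. The map $\xi^\trans(\cdot)\,\xi$ is linear and hence commutes with mollification. So the same argument shows that the matrix-valued Jensen inequality (Lemma~\ref{lemma:jensen-matrix-valued}) follows directly from the scalar one (Lemma~\ref{lemma:jensen-real-valued}) applied to each $g_\xi$.

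Both arguments use only the convexity of $p$, not its specific power-law form.
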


\begin{proof}
	Let $\tau\in [0,1]$ and $(\rho_1,\vm_1),(\rho_2,\vm_2)\in \R^+\times \R^n$. We have to show that the matrix 
	\begin{align*}
		\mA & :=\frac{(\tau\vm_1+(1-\tau)\vm_2)\otimes (\tau\vm_1+(1-\tau)\vm_2)}{\tau\rho_1 + (1-\tau)\rho_2} + p\big(\tau\rho_1 + (1-\tau)\rho_2\big) \id \\
		&\quad - \tau \left(\frac{\vm_1\otimes \vm_1}{\rho_1} + p(\rho_1) \id \right) - (1-\tau) \left(\frac{\vm_2\otimes \vm_2}{\rho_2} + p(\rho_2) \id \right) 
	\end{align*} 
	is negative semi-definite. A long but straightforward computation yields
	\begin{align*}
		\mA &= - \frac{\tau(1-\tau)}{\rho_1 \rho_2 (\tau\rho_1 + (1-\tau)\rho_2)} \big( \rho_2 \vm_1 - \rho_1 \vm_2 \big) \otimes \big( \rho_2 \vm_1 - \rho_1 \vm_2 \big) \\
		&\quad + \Big( p\big(\tau\rho_1 + (1-\tau)\rho_2\big) - \tau p( \rho_1) - (1-\tau) p(\rho_2) \Big) \id.
	\end{align*} 
	As for any $\vm^\ast\in \R^n$, the eigenvalues of the matrix $\vm^\ast \otimes \vm^\ast$ are $0$ and $|\vm^\ast|^2$, and $\rho\mapsto p(\rho)$ is convex, we infer $\mA\leq 0$ as desired. 
\end{proof}

\section{Poisson equation and Helmholtz decomposition} 

Let $n\in \{2,3\}$, as everywhere in this paper. Let us summarise some facts regarding the Poisson equation.

\begin{lemma} \label{lemma:poisson-mf} 
	Let $s\in \R$, and $f\in H^s(\T^n)$ with $\int_{\T^n} f \dx = 0$. Then there exists a unique $v\in H^{s+2}(\T^n)$ with $\int_{\T^n} v \dx = 0$ and $\Lap v = f$. Moreover, the estimate $\|v\|_{H^{s+2}(\T^n)} \lesssim \|f\|_{H^s(\T^n)}$ holds. 
\end{lemma}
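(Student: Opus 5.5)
The plan is to use Fourier series on $\T^n$. Write $f=\sum_{\vk\in\Z^n}\hat f(\vk)e^{2\pi i\vk\cdot\vx}$ with $\hat f(0)=\int_{\T^n}f\dx=0$. Recall that $H^s(\T^n)$ is the space of distributions with finite norm
\[
\|f\|_{H^s}^2=\sum_{\vk}(1+|\vk|^2)^s|\hat f(\vk)|^2 ,
\]
and that $\Lap$ acts as the Fourier multiplier $-4\pi^2|\vk|^2$.

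\textbf{Existence.} Define $\hat v(0):=0$ and, for $\vk\neq0$,
\[
\hat v(\vk):=-\frac{\hat f(\vk)}{4\pi^2|\vk|^2}.
\]
Then $v$ has mean zero and $\widehat{\Lap v}(\vk)=\hat f(\vk)$ for every $\vk$; at $\vk=0$ this uses $\hat f(0)=0$. So $\Lap v=f$ in the sense of distributions.

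\textbf{Estimate.} For $\vk\neq0$ we have $|\vk|\geq1$, hence $(1+|\vk|^2)\leq 2|\vk|^2$. This gives
\[
(1+|\vk|^2)^{s+2}|\hat v(\vk)|^2\leq \frac{4}{16\pi^4}(1+|\vk|^2)^s|\hat f(\vk)|^2 .
\]
Summing over $\vk$ yields $\|v\|_{H^{s+2}}\lesssim\|f\|_{H^s}$, and in particular $v\in H^{s+2}(\T^n)$.

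\textbf{Uniqueness.} Let $v_1,v_2$ be two mean-zero solutions. Their difference $w$ satisfies $|\vk|^2\hat w(\vk)=0$ for all $\vk$, so $\hat w(\vk)=0$ for $\vk\neq0$. The mean-zero condition gives $\hat w(0)=0$, so $w=0$.

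There is no real obstacle here. The only point needing care is the zero mode: the solvability condition $\hat f(0)=0$ is what makes $\vk=0$ consistent, and the normalisation $\hat v(0)=0$ is what gives uniqueness. The lower bound $|\vk|\geq1$ on the nonzero lattice points is exactly what supplies the gain of two derivatives uniformly in $s\in\R$.
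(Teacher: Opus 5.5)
Your Fourier-multiplier argument is correct and is the standard proof the paper has in mind: the paper omits the proof as well known, but uses the same Fourier characterisation $\|f\|_{H^s(\T^n)}^2=\sum_{\vk}(1+|\vk|^2)^s|\widehat{f}_\vk|^2$ in the proof of Cor.~\ref{cor:poisson-not-mf}. One small detail is worth adding: the multiplier $-1/(4\pi^2|\vk|^2)$ is real and even, so $\hat v(-\vk)=\overline{\hat v(\vk)}$ whenever $f$ is real, and hence $v$ is real-valued as required.
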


As Lemma~\ref{lemma:poisson-mf} is well-known, we skip its proof for the sake of brevity. Combining Lemma~\ref{lemma:poisson-mf} with Sobolev embedding, we obtain the following statement. 

\begin{cor} \label{cor:poisson-mf-sobolev} 
	Let $f\in H^1(\T^n)$ with $\int_{\T^n} f \dx = 0$. Then there exists a unique $v\in H^{3}(\T^n)$ with $\int_{\T^n} v \dx = 0$ and $\Lap v = f$. There even holds $v\in C^1(\T^n)$, and the estimate 
	\begin{equation} \label{eq:poisson-sobolev}
		\|\Grad v\|_{C^0(\T^n)} \lesssim \|f\|_{C^1(\T^n)}
	\end{equation}
	is satisfied. 
\end{cor}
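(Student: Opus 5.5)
The statement to prove is Cor.~\ref{cor:poisson-mf-sobolev}. The plan is to derive it directly from Lemma~\ref{lemma:poisson-mf} with $s=1$, together with a Sobolev embedding. Two points need care. First, $H^3(\T^n)\subset C^1(\T^n)$ holds for $n\le 3$, since $3-1>n/2$. Second, the right-hand side of \eqref{eq:poisson-sobolev} is a $C^1$ norm rather than an $H^1$ norm, so we must pass from $\|f\|_{H^1}$ to $\|f\|_{C^1}$.

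First, existence and uniqueness. Since $f\in H^1(\T^n)$ has zero mean, Lemma~\ref{lemma:poisson-mf} with $s=1$ yields a unique mean-free $v\in H^3(\T^n)$ with $\Lap v=f$. It also gives the bound $\|v\|_{H^3(\T^n)}\lesssim \|f\|_{H^1(\T^n)}$. Uniqueness within mean-free $H^3$ is part of that lemma.

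Second, regularity. Each component $\partial_i v$ lies in $H^2(\T^n)$. The embedding $H^2(\T^n)\hookrightarrow C^0(\T^n)$ holds for $n\in\{2,3\}$, because $2>n/2$. Hence $\Grad v$ is continuous and $v\in C^1(\T^n)$, with
$$
	\|\Grad v\|_{C^0(\T^n)}\lesssim \|\Grad v\|_{H^2(\T^n)}\le \|v\|_{H^3(\T^n)}\lesssim \|f\|_{H^1(\T^n)}.
$$

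Third, the passage to the $C^1$ norm. Because $|\T^n|=1$, we have $\|g\|_{L^2(\T^n)}\le\|g\|_{C^0(\T^n)}$ for any continuous $g$. Applying this to $f$ and to $\partial_i f$ gives $\|f\|_{H^1(\T^n)}\le C\|f\|_{C^1(\T^n)}$ whenever $f\in C^1$. If $f\notin C^1(\T^n)$, the right-hand side of \eqref{eq:poisson-sobolev} is infinite, and there is nothing to prove. Chaining the inequalities gives \eqref{eq:poisson-sobolev}.

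No step is a real obstacle; the argument only strings together standard facts. The one thing to watch is the dimension restriction $n\le3$ in the embedding $H^2\hookrightarrow C^0$, which is exactly the paper's standing assumption.
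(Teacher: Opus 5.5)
Your proposal is correct and follows essentially the same route as the paper: Lemma~\ref{lemma:poisson-mf} with $s=1$, then the Sobolev embedding $H^3(\T^n)\hookrightarrow C^1(\T^n)$ for $n\in\{2,3\}$, then $\|f\|_{H^1(\T^n)}\lesssim\|f\|_{C^1(\T^n)}$ on the unit-volume torus. Your remark that the estimate is trivial when $f\notin C^1(\T^n)$ is a small point the paper leaves implicit.
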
 

\begin{proof}
	The first part follows immediately from Lemma~\ref{lemma:poisson-mf} with $s=1$. Keeping in mind that we consider $n\in \{2,3\}$, Sobolev embedding yields $v\in C^1(\T^n)$ and 
	$$
		\|\Grad v\|_{C^0(\T^n)} \lesssim \|v\|_{C^1(\T^n)} \lesssim \|v\|_{H^3(\T^n)} \lesssim \|f\|_{H^1(\T^n)} \lesssim  \|f\|_{C^1(\T^n)}.
	$$ 
\end{proof}

\begin{cor} \label{cor:poisson-not-mf} 
	Let $s\in \R$, and $f\in H^s(\T^n)$ with $\int_{\T^n} f \dx \leq 0$. Then there exists a $v\in H^{s+2}(\T^n)$ with $\int_{\T^n} v \dx = 0$ and $\Lap v \geq f$. Moreover, the estimate $\|v\|_{H^{s+2}(\T^n)} \lesssim \|f\|_{H^s(\T^n)}$ holds. 
\end{cor}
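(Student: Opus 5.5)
The plan is to reduce to the mean-zero case of Lemma~\ref{lemma:poisson-mf} by subtracting the (non-positive) mean of $f$. Set $\bar f := \int_{\T^n} f \dx \leq 0$; since $|\T^n|=1$, the constant $\bar f$ is the mean of $f$. Write $f = (f - \bar f) + \bar f$. The function $f-\bar f$ lies in $H^s(\T^n)$, because constants belong to every $H^s$ on the torus. Its mean is zero, which in the distributional case $s<0$ is understood as its zeroth Fourier coefficient vanishing.

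Apply Lemma~\ref{lemma:poisson-mf} to $f-\bar f$. This yields a unique $v\in H^{s+2}(\T^n)$ with $\int_{\T^n} v\dx=0$, $\Lap v = f-\bar f$, and $\|v\|_{H^{s+2}(\T^n)}\lesssim \|f-\bar f\|_{H^s(\T^n)}$. Then
\[
	\Lap v = f - \bar f \geq f
\]
since $-\bar f\geq 0$. For $s<0$ this inequality holds in the sense of distributions; in the application in Sect.~\ref{subsubsec:infsol-s5} $f$ is smooth, so it holds pointwise there.

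For the estimate, the only point is that subtracting the mean does not increase the norm. On the Fourier side, $f-\bar f$ is just $f$ with its zero mode removed, so $\|f-\bar f\|_{H^s(\T^n)}\leq \|f\|_{H^s(\T^n)}$ for every $s\in\R$. Combining this with the estimate from Lemma~\ref{lemma:poisson-mf} gives $\|v\|_{H^{s+2}(\T^n)}\lesssim\|f\|_{H^s(\T^n)}$.

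There is no real obstacle. The one subtle point is to interpret the mean and the inequality correctly when $s<0$, and I would settle that with the Fourier-coefficient description above. Smoothness of $v$ when $f$ is smooth, as used in Sect.~\ref{subsubsec:infsol-s5}, follows by applying the statement for every $s$.
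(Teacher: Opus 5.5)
Your proposal is correct and matches the paper's proof: you subtract the non-positive mean, apply Lemma~\ref{lemma:poisson-mf} to $f-\int_{\T^n} f\dx$, and bound its $H^s$ norm by removing the zero Fourier mode. Your remarks on how to read the mean and the inequality when $s<0$ are a harmless extra precision that the paper leaves implicit.
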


\begin{proof}
	Set $\widetilde{f}:= f - \int_{\T^n} f \dx$ and apply Lemma~\ref{lemma:poisson-mf} to $\widetilde{f}$ to obtain $v\in H^{s+2}(\T^n)$ with $\int_{\T^n} v \dx = 0$ and 
	$$
		\Lap v = \widetilde{f} = f - \int_{\T^n} f \dx \geq f.
	$$
	Moreover, there holds
	$$
		\|v\|_{H^{s+2}(\T^n)}^2 \lesssim \|\widetilde{f}\|_{H^s(\T^n)}^2 = \sum_{\vk\in \Z^n\setminus\{0\}} (1+|\vk|^2)^s |\widehat{f}_\vk|^2 \leq \sum_{\vk\in \Z^n} (1+|\vk|^2)^s |\widehat{f}_\vk|^2 = \|f\|_{H^s(\T^n)}^2.
	$$
\end{proof}

Let us also recall the Helmholtz decomposition in $H^s$.

\begin{lemma} \label{lemma:helmholtz} 
	Let $s\in \R$, and $\vv\in H^s(\T^n;\R^n)$. There exists a unique decomposition $\vv = \vw + \Grad \psi$ where $\vw\in H^s(\T^n;\R^n)$ with $\Div\vw=0$, and $\psi \in H^{s+1}(\T^n)$ with $\int_{\T^n} \psi \dx = 0$. Moreover, the following estimates hold
	\begin{align*}
		\|\vw\|_{H^s(\T^n)} &\lesssim \|\vv\|_{H^s(\T^n)}, & \|\psi\|_{H^{s+1}(\T^n)} &\lesssim \|\vv\|_{H^s(\T^n)}.
	\end{align*}
	We write $\Leray \vv:=\vw$.
\end{lemma}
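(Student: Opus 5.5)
The statement is Lemma~\ref{lemma:helmholtz}, the Helmholtz decomposition on $\T^n$ in $H^s$. I would prove it by working entirely in Fourier series, as in the proof of Cor.~\ref{cor:poisson-not-mf}.

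\textbf{Construction.} Write $\vv(\vx)=\sum_{\vk\in\Z^n}\widehat{\vv}_\vk e^{2\pi i\vk\cdot\vx}$ with $\widehat{\vv}_\vk\in\mathbb{C}^n$ and $\widehat{\vv}_{-\vk}=\overline{\widehat{\vv}_\vk}$. For $\vk\neq 0$ set
$$
\widehat{\psi}_\vk:=\frac{\vk\cdot\widehat{\vv}_\vk}{2\pi i|\vk|^2},\qquad \widehat{\vw}_\vk:=\widehat{\vv}_\vk-\frac{(\vk\cdot\widehat{\vv}_\vk)\,\vk}{|\vk|^2}.
$$
For $\vk=0$ set $\widehat{\psi}_0:=0$ and $\widehat{\vw}_0:=\widehat{\vv}_0$. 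These choices are compatible with the reality condition, so $\psi$ and $\vw$ are real-valued.

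\textbf{Properties of the construction.} On each mode the gradient acts as multiplication by $2\pi i\vk$. Hence $\widehat{(\Grad\psi)}_\vk=\frac{(\vk\cdot\widehat{\vv}_\vk)\vk}{|\vk|^2}$, which gives $\vw+\Grad\psi=\vv$ mode by mode. Also $\vk\cdot\widehat{\vw}_\vk=0$ for every $\vk$, so $\Div\vw=0$ in the distributional sense. Finally $\widehat{\psi}_0=0$, which is exactly the condition $\int_{\T^n}\psi\dx=0$.

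\textbf{Estimates.} The map $\widehat{\vv}_\vk\mapsto\widehat{\vw}_\vk$ is the orthogonal projection onto $\vk^\perp$, so $|\widehat{\vw}_\vk|\le|\widehat{\vv}_\vk|$. Multiplying by the weight $(1+|\vk|^2)^s$ and summing gives $\|\vw\|_{H^s(\T^n)}\le\|\vv\|_{H^s(\T^n)}$. For $\vk\neq0$ we have $|\widehat{\psi}_\vk|\le\frac{|\widehat{\vv}_\vk|}{2\pi|\vk|}$ and $1+|\vk|^2\le 2|\vk|^2$, which gives
$$
(1+|\vk|^2)^{s+1}|\widehat{\psi}_\vk|^2\lesssim(1+|\vk|^2)^s|\widehat{\vv}_\vk|^2 .
$$
Summing over $\vk$ yields $\|\psi\|_{H^{s+1}(\T^n)}\lesssim\|\vv\|_{H^s(\T^n)}$. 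This argument works for every real $s$, including negative $s$.

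\textbf{Uniqueness.} Suppose $\vw_1+\Grad\psi_1=\vw_2+\Grad\psi_2$ are two such decompositions. Set $\vec{d}:=\vw_1-\vw_2$ and $\chi:=\psi_1-\psi_2$, so that $\vec{d}=-\Grad\chi$ with $\Div\vec{d}=0$. Taking the divergence gives $\Lap\chi=0$, i.e.\ $|\vk|^2\widehat{\chi}_\vk=0$. Hence $\widehat{\chi}_\vk=0$ for all $\vk\neq0$, and the mean-zero condition forces $\widehat{\chi}_0=0$. Therefore $\chi=0$, and then $\vec{d}=-\Grad\chi=0$.

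There is no real obstacle here; the only point requiring care is the zero mode. The mean of $\vv$ has to be assigned to $\vw$ rather than to $\Grad\psi$, since a gradient on the torus always has zero mean. Likewise $\psi$ is determined only up to a constant, and that constant is fixed by the normalisation $\int_{\T^n}\psi\dx=0$.
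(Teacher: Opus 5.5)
Your proof is correct. The paper gives no proof of this lemma (it calls it well known), and your Fourier-multiplier construction is the standard argument. It projects each mode $\widehat{\vv}_\vk$ onto $\vk^\perp$, assigns the zero mode to $\vw$, reads $\int_{\T^n}\psi\dx=0$ as $\widehat{\psi}_0=0$, and gives estimates and uniqueness valid for every $s\in\R$, in the same spirit as the paper's Fourier proof of Cor.~\ref{cor:poisson-not-mf}.
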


Lemma~\ref{lemma:helmholtz} is again well-known, so we skip its proof.

\printbibliography[heading=bibintoc] 

@book{BenSer,
  author  = "S.~Benzoni-Gavage and D.~Serre",
  title   = "Multi-dimensional Hyberbolic Partial Differential Equations, First-order Systems and Applications",
  publisher = "Oxford University Press",
  year    = "2007",
  address = "Oxford",
  series  = "Oxford Mathematical Monographs"
}

@book{Bernstein,
	author  = "D.~S.~Bernstein",
	title   = "Matrix Mathematics",
	publisher = "Princeton University Press",
	year    = "2009",
	address = "Princeton and Oxford",
	edition = "2"
}

@article{BreFeiHof20_1,
  author  = "D.~Breit and E.~Feireisl and M.~Hofmanov{\'a}",
  title   = "Solution semiflow to the isentropic {E}uler system",
  journal = "Arch. Ration. Mech. Anal.",
  year    = "2020",
  volume  = "235",
  number  = "1",
  pages   = "167-194"
}

@article{BreChiKre18,
	author  = "J.~B{\v r}ezina and E.~Chiodaroli and O.~Kreml",
	title   = "Contact discontinuities in multi-dimensional isentropic {E}uler equations",
	journal = "Electron. J. Differential Equations",
	year    = "2018",
	volume  = "2018",
	number  = "94",
	pages   = "1-11"
}

@article{CheVasYu21,
  author  = "R.~M.~Chen and A.~Vasseur and C.~Yu",
  title   = "Global ill-posedness for a dense set of initial data to the isentropic system of gas dynamics",
  journal = "Adv. Math.",
  year    = "2021",
  volume  = "393",
  note    = "Paper No. 108057" 
}

@article{Chiodaroli14,
  author  = "E.~Chiodaroli",
  title   = "A counterexample to well-posedness of entropy solutions to the compressible {E}uler system",
  journal = "J. Hyperbolic Differ. Equ.",
  year    = "2014",
  volume  = "11",
  number  = "3",
  pages   = "493-519"
}

@article{ChiDelKre15,
  author  = "E.~Chiodaroli and C.~{De~Lellis} and O.~Kreml",
  title   = "Global ill-posedness of the isentropic system of gas dynamics",
  journal = "Comm. Pure Appl. Math.",
  year    = "2015",
  volume  = "68",
  number  = "7",
  pages   = "1157-1190"
}

@article{ChiFei24_1,
  author  = "E.~Chiodaroli and E.~Feireisl",
  title   = "On the density of ``wild'' initial data for the barotropic {E}uler system",
  journal = "Ann. Mat. Pura Appl. (4)",
  year    = "2024",
  volume  = "203",
  number  = "4",
  pages   = "1809-1817"
}

@article{ChiFei24_2,
	author  = "E.~Chiodaroli and E.~Feireisl",
	title   = "{G}limm's method and density of wild data for the {E}uler system of gas dynamics",
	journal = "Nonlinearity",
	year    = "2024",
	volume  = "37",
	number  = "3",
	note    = "Paper No. 035005"
}

@article{ChiKre14,
  author  = "E.~Chiodaroli and O.~Kreml",
  title   = "On the energy dissipation rate of solutions to the compressible isentropic {E}uler system",
  journal = "Arch. Ration. Mech. Anal.",
  year    = "2014",
  volume  = "214",
  number  = "3",
  pages   = "1019-1049"
}

@article{ChiKre18,
  author  = "E.~Chiodaroli and O.~Kreml",
  title   = "Non-uniqueness of admissible weak solutions to the {R}iemann problem for isentropic {E}uler equations",
  journal = "Nonlinearity",
  year    = "2018",
  volume  = "31",
  number  = "4",
  pages   = "1441-1460"
}

@article{CKMS21,
  author = "E.~Chiodaroli and O.~Kreml and V.~M{\'a}cha and S.~Schwarzacher",
  title  = "Non-uniqueness of admissible weak solutions to the compressible {E}uler equations with smooth initial data",
  journal = "Trans. Amer. Math. Soc.",
  year    = "2021",
  volume  = "374",
  number  = "4",
  pages   = "2269-2295"
}

@article{DelSze09,
  author  = "C.~{De~Lellis} and L.~{Sz{\'e}kelyhidi~Jr.}",
  title   = "The {E}uler equations as a differential inclusion",
  journal = "Ann. of Math. (2)",
  year    = "2009",
  volume  = "170",
  number  = "3",
  pages   = "1417-1436"
}

@article{DelSze10,
  author  = "C.~{De~Lellis} and L.~{Sz{\'e}kelyhidi~Jr.}",
  title   = "On admissibility criteria for weak solutions of the {E}uler equations",
  journal = "Arch. Ration. Mech. Anal.",
  year    = "2010",
  volume  = "195",
  number  = "1",
  pages   = "225-260"
}

@article{Feireisl14,
  author  = "E.~Feireisl",
  title   = "Maximal dissipation and well-posedness for the compressible {E}uler system",
  journal = "J. Math. Fluid Mech.",
  year    = "2014",
  volume  = "16",
  pages   = "447-461"
}

@book{FLMS,
  author  = "E.~Feireisl and M.~Luk{\'a}{\v c}ov{\'a}-Medvid'ov{\'a} and H.~Mizerov{\'a} and B.~She",
  title   = "Numerical analysis of compressible fluid flows",
  publisher = "Springer",
  year    = "2021",
  address = "Cham",
  series  = "Modeling, Simulation and Applications",
  number  = "20"
}

@article{GebKol22,
  author = "B.~Gebhard and J.~Kolumb{\'a}n",
  title  = "On bounded two-dimensional globally dissipative {E}uler flows",
  journal = "SIAM J. Math. Anal.",
  year    = "2022",
  volume  = "54",
  number  = "3",
  pages   = "3457-3479"
}

@article{Kato75, 
  author  = "T.~Kato",
  title   = "The {C}auchy problem for quasi-linear symmetric hyperbolic systems",
  journal = "Arch. Ration. Mech. Anal.",
  year    = "1975",
  volume  = "58",
  number  = "3",
  pages   = "181-205"
}

@article{KliMar18_1,
  author  = "C.~Klingenberg and S.~Markfelder",
  title   = "The {R}iemann problem for the multidimensional isentropic system of gas dynamics is ill-posed if it contains a shock",
  journal = "Arch. Ration. Mech. Anal.",
  year    = "2018",
  volume  = "227",
  number  = "3",
  pages   = "967-994"
}

@book{Majda,
  author  = "A.~Majda",
  title   = "Compressible Fluid Flow and Systems of Conservation Laws in Several Space Variables",
  publisher = "Springer",
  year    = "1984",
  address = "New York",
  series  = "Applied Mathematical Sciences",
  number  = "53"
}

@article{Markfelder24,
  author  = "S.~Markfelder",
  title   = "A new convex integration approach for the compressible {E}uler equations and failure of the local maximal dissipation criterion",
  journal = "Nonlinearity",
  year    = "2024",
  volume  = "37",
  number  = "11",
  pages   = "1-60"
}

@book{Markfelder,
  author  = "S.~Markfelder",
  title   = "Convex Integration Applied to the Multi-Dimensional Compressible Euler Equations",
  publisher = "Springer",
  year    = "2021",
  address = "Cham, Switzerland",
  series  = "Lecture Notes in Mathematics",
  number  = "2294"
}

@article{SzeWie12,
  author  = "L.~{Sz{\'e}kelyhidi~Jr.} and E.~Wiedemann",
  title   = "Young measures generated by ideal incompressible fluid flows",
  journal = "Arch. Ration. Mech. Anal.",
  year    = "2012",
  volume  = "206",
  number  = "1",
  pages   = "333-366"
}

@misc{BouMarTit26pre,
	author  = "D.~W.~Boutros and S.~Markfelder and E.~S.~Titi",
	title   = "A generalized framework for $L^r$ convex integration and its application to geophysical models",
	archivePrefix = "arXiv",
	eprint  = "2606.12192",
	year    = "2026"
}

\end{document}